\title[$\epsilon$-regularity at free boundary]{Epsilon-regularity for $p$-harmonic maps at a free boundary on a sphere}
\author{Katarzyna Mazowiecka}
\author{R\'emy Rodiac}
\author{Armin Schikorra}
\address[Katarzyna Mazowiecka]{{Universit\'e catholique de Louvain, Institut de Recherche en Math\'ematique et Physique, Chemin du Cyclotron 2 bte L7.01.02, 1348 Louvain-la-Neuve, Belgium}}
\email{{katarzyna.mazowiecka@uclouvain.be}}
\address[R\'emy Rodiac]{
Universit\'e catholique de Louvain, Institut de Recherche en Math\'ematique et Physique, Chemin du Cyclotron 2 bte {L7.01.02, 1348} Louvain-la-Neuve, Belgium}
\email{remy.rodiac@uclouvain.be}
\address[Armin Schikorra]{Department of Mathematics,
University of Pittsburgh,
301 Thackeray Hall,
Pittsburgh, PA 15260, USA}
\email{armin@pitt.edu}
\def\eps{\varepsilon}
\def\N{{\mathbb N}}
\def\H{{\mathcal H}}
\def\S{{\mathbb S}}
\def \p {\partial}
\newtheorem{theorem}{Theorem}
\newtheorem{lemma}[theorem]{Lemma}
\newtheorem{corollary}[theorem]{Corollary}
\newtheorem{proposition}[theorem]{Proposition}
\newtheorem*{remark}{Remark}
\theoremstyle{definition}
\newtheorem{definition}[theorem]{Definition}
\def\dist{{\rm dist\,}}
\def\divv{{\rm div\,}}
\def\curl{{\rm curl\,}}
\def\supp{{\rm supp\,}}
\def\Curl{{\rm Curl\,}}
\newcommand{\R}{\mathbb{R}}
\newcommand{\brac}[1]{\left (#1 \right )}
\newcommand{\Ep}{\bigwedge\nolimits}
\newcommand{\lE}[1]{{E{(#1)}}}
\newcommand{\barint}{
\rule[.036in]{.12in}{.009in}\kern-.16in \displaystyle\int }
\newcommand{\barcal}{\mbox{$ \rule[.036in]{.11in}{.007in}\kern-.128in\int $}}
\def\mvint_#1{\mathchoice
          {\mathop{\vrule width 6pt height 3 pt depth -2.5pt
                  \kern -8pt \intop}\nolimits_{\kern -3pt #1}}%
          {\mathop{\vrule width 5pt height 3 pt depth -2.6pt
                  \kern -6pt \intop}\nolimits_{#1}}%
          {\mathop{\vrule width 5pt height 3 pt depth -2.6pt
                  \kern -6pt \intop}\nolimits_{#1}}%
          {\mathop{\vrule width 5pt height 3 pt depth -2.6pt
                  \kern -6pt \intop}\nolimits_{#1}}}
\numberwithin{theorem}{section} \numberwithin{equation}{section}
\newcommand{\lap}{\Delta }
\newcommand{\laph}{\laps{1}}
\newcommand{\aleq}{\precsim}
\newcommand{\aeq}{\approx}
\newcommand{\laps}[1]{(-\lap) ^{\frac{#1}{2}}}
\newcommand{\lapms}[1]{I^{#1}}
\let\latexchi\chi
\renewcommand\chi{\@ifnextchar_\sub@chi\latexchi}
\newcommand{\sub@chi}[2]{
  \@ifnextchar^{\subsup@chi{#2}}{\latexchi^{}_{#2}}%
}
\newcommand{\subsup@chi}[3]{
  \latexchi_{#1}^{#3}%
}
\let\latexeta\eta
\renewcommand\eta{\@ifnextchar_\sub@eta\latexeta}
\newcommand{\sub@eta}[2]{
  \@ifnextchar^{\subsup@eta{#2}}{\latexeta^{}_{#2}}%
}
\newcommand{\subsup@eta}[3]{
  \latexeta_{#1}^{#3}%
}
\renewcommand{\div}{\operatorname{div}}
\newcommand{\dv}{\operatorname{div}}
\newcommand{\solu}{u}
\newcommand{\solw}{w}
\newcommand{\solv}{v}
\begin{document}

\subjclass[2010]{58E20, 35B65, 35R35, 35J58, 35J66}
\sloppy
\begin{abstract}
We prove an $\epsilon$-regularity theorem for vector-valued $p$-harmonic maps, which are critical with respect to a partially free boundary condition, namely that they map the boundary into a round sphere.

This does not seem to follow from the reflection method that Scheven used for harmonic maps with free boundary (i.e., the case $p=2$): the reflected equation can be interpreted as a $p$-harmonic map equation into a manifold, but the regularity theory for such equations is only known for round targets.

Instead, we follow the spirit of the last-named author's recent work on free boundary harmonic maps and choose a good frame directly at the free boundary. This leads to growth estimates, which, in the critical regime $p=n$, imply H\"older regularity of solutions. In the supercritical regime, $p < n$, we combine the growth estimate with the geometric reflection argument: the reflected equation is super-critical, but, under the assumption of growth estimates, solutions are regular.

In the case $p<n$, for stationary $p$-harmonic maps with free boundary, as a consequence of a monotonicity formula we obtain partial regularity up to the boundary away from a set of $(n-p)$-dimensional Hausdorff measure.
\end{abstract}

\maketitle
\tableofcontents
\section{Introduction}
Over the last few years the theory of half-harmonic maps received a lot of attention, beginning with the pioneering work of Da Lio and Rivi\`{e}re \cite{DaLio-Riviere-2011,DaLio-Riviere-2011-2}, see also the subsequent \cite{Schikorra-2012, DaLio-2013, Millot-Sire-2015, Schikorra-2015}. Half-harmonic maps appear in nature as free boundary problems --- e.g., they are connected to critical points of the energy
\[
 \|\nabla u\|^2_{L^2(D,\R^N)} \quad \mbox{s.t. $u(\partial D) \subset {\mathcal{N}}$ in the a.e. trace sense}.
\]
Here, $D\subset\R^n$ {is an open set} and ${\mathcal{N}} \subset \R^N$ is a smooth closed manifold. The Euler-Lagrange equations of the latter problem are
\begin{equation}\label{eq:freebdharm}
\begin{cases}
\lap u = 0 \quad &\mbox{in $D$}\\
\partial_\nu \solu\perp T_\solu {\mathcal{N}} \quad &\mbox{on $\partial D$},\\
\end{cases}
\end{equation}
where $\nu$ denotes the outer normal vector.

For $D = \R^n_+$ and $\partial D = \R^{n-1} \times \{0\}$ the equation \eqref{eq:freebdharm} is equivalent to
\begin{equation}\label{eq:halfharm}
\begin{cases}
\lap u = 0 \quad &\mbox{in $\R^n_+$}\\
\laph_{\R^{n-1}} \solu\perp T_\solu {\mathcal{N}} \quad &\mbox{on $\R^{n-1}\times \{0\}$}.\\
\end{cases}
\end{equation}
Here, $\laph_{\R^{n-1}}$ denotes the half-Laplacian acting on functions defined on $\R^{n-1} \times \{0\}$. The equation $\laph_{\R^{n-1}} \solu\perp T_\solu {\mathcal{N}}$ is the half-harmonic map equation, for an overview see \cite{DaLio-Riviere-2011}.

The equivalence of \eqref{eq:freebdharm} and \eqref{eq:halfharm} is crucially related to the fact that we are considering critical points of an $L^2$-energy. Several notions of fractional $p$-harmonic maps have been proposed. In \cite{DaLio-Schikorra-2014,DaLio-Schikorra-2017} Da Lio and the third-named author considered $H^{s,p}$-harmonic maps, i.e., critical points of 
\begin{equation}\label{eq:Hsharm}
 \|\laps{s} u\|^p_{L^p(\R^{n-1},\R^N)} \quad \mbox{s.t. $u(x) \in {\mathcal{N}}$ for a.e. $x\in\R^{n-1}$}.
\end{equation}
In \cite{Schikorra-2015Lp} energies with a gradient-type structure were studied, namely 
\begin{equation}\label{eq:gradsharm}
 \|D^s u\|^p_{L^p(\R^{n-1},\R^N)} \quad \mbox{s.t. $u(x) \in {\mathcal{N}}$ for a.e. $x\in\R^{n-1}$},
\end{equation}
where $D^s = D \lapms{1-s}$ is the Riesz-fractional gradient, see also \cite{Shieh-Spector,Shieh-Spector2}. Finally, $W^{s,p}$-harmonic maps were studied in \cite{SchikorraCPDE}, that is critical points of the energy
\begin{equation}\label{eq:wspharm}
 \int_{\R^{n-1}}\int_{\R^{n-1}} \frac{|u(x)-u(y)|^p}{|x-y|^{n+sp}}\ dx\ dy \quad \mbox{s.t. $u(x) \in {\mathcal{N}}$ for a.e. $x\in\R^{n-1}$},
\end{equation}
see also \cite{Mazowiecka-Schikorra-2017}. All these versions of fractional $p$-harmonic maps have one thing in common: they do not seem related to a free boundary equation \eqref{eq:freebdharm}. For \eqref{eq:Hsharm} and \eqref{eq:gradsharm} this is clear, since the energies {are defined on} the ``wrong'' function space $H^{s,p}$. {Indeed, a map in } $W^{1,p}(D)$ {has a trace in} $W^{1-\frac{1}{p},p}(\partial D)$, {but} $W^{1-\frac{1}{p},p}(\partial D) \neq H^{1-\frac{1}{p},p}(\partial D)$ for $p \neq 2$. For the $W^{s,p}$-energy \eqref{eq:wspharm} it is an interesting open problem if it is possible to find a $p$-harmonic extension that interprets this problem as a free boundary problem.

In this work we concentrate on free boundary problems. {We focus on smooth bounded domains, so in the sequel $D$ is such a domain.} We prove regularity at the free boundary for critical points $u:D \to \R^N$ of the energy 
\begin{equation}\label{eq:penergy}
 \|\nabla u\|^p_{L^p(D,\R^N)} \quad \mbox{s.t. $u(\partial D) \subset {\mathcal{N}}$ in the a.e. trace sense}.
\end{equation}
It is not clear that the space $\mathcal{A}:=\{u \in W^{1,p}(D,\R^N)\colon \ u(\partial D) \subset {\mathcal{N}}\}$ possesses a natural structure of a smooth Banach manifold. That is why we shall define what we mean by critical point.
\begin{definition}
We say that $u$ is a critical point of $\int_D |\nabla u|^p$ in the space $\mathcal{A}$ if $u$ satisfies
\begin{equation}\label{eq:def}
{\int_D |\nabla u|^{p-2}\nabla u \cdot \nabla \phi=0}
\end{equation}
for all $\phi$ in $W^{1,p}(D,\R^N)$ s.t. its trace $\phi(x)\rvert_{\partial D}$ is in $T_{u(x)}{\mathcal{N}}$ a.e. Such a critical point is called a \textit{$p$-harmonic map with free boundary}.
\end{definition}
Equation \eqref{eq:def} is obtained by requiring that for every $C^{1}$-path $\gamma:(-1,1)\rightarrow \mathcal{A}$ such that $\gamma(0)=u$ we have
\begin{equation}
\frac{d}{dt}\bigg\rvert_{t=0} \int_D |\nabla \gamma(t)|^p=0.
\end{equation}
\begin{remark}
{Although this is not relevant for our purpose, let us remark that equation} \eqref{eq:def} {can be interpreted} as $u$ satisfying {in a distributional sense}
\begin{equation}\label{eq:soludef}
\begin{cases}
\dv (|\nabla \solu|^{p-2} \nabla \solu) = 0 \quad &\mbox{in $D$}\\
|\nabla \solu|^{p-2}\partial_\nu \solu\perp T_\solu {\mathcal{N}} \quad &\mbox{on $\partial D$}.
\end{cases}
\end{equation}
{Note that, by definition, $\solu$ is a solution of \eqref{eq:soludef} in the sense of distributions if and only if 
\begin{equation}\label{eq:soludef2}
\int_D |\nabla \solu|^{p-2}\nabla \solu\cdot \nabla \phi =0
\end{equation}
for all $\phi \in C^\infty(\overline{D},\R^N)$ with $\phi(x)\in T_{\solu(x)}\mathcal{N}$ for $\mathcal{H}^{n-1}$-a.e.\ $x \in \p D$. Indeed, taking $\phi \in C^\infty_c(D,\R^N)$ we obtain the interior equation 
$$\div( |\nabla \solu|^{p-2}\nabla \solu)=0 \text{ in } D.$$
As for the boundary equation, we can see that if $\solu$ is smooth enough and satisfies \eqref{eq:soludef2} then after an integration by parts we find
\begin{equation}\label{eq:soludefboundary}
\int_{\p D} |\nabla \solu|^{p-2}\p_\nu \solu \cdot \varphi=0.
\end{equation}
Since any $\varphi \in C^\infty(\p D,\R^N)$ with $\varphi(x) \in T_{u(x)}\mathcal{N}$ can be extended in a function $\phi \in C^\infty(\overline{D},\R^N)$, thus \eqref{eq:soludefboundary} implies
\[
|\nabla \solu|^{p-2}\p_\nu u \perp T_\solu \mathcal{N} \text{ on } \p D.
\]
The equivalence between being a solution of \eqref{eq:soludef} in the sense of distributions and being a critical point of the $p$-energy in the space $\mathcal{A}$ is {true if $u$ is smooth enough, for example $u \in C^1(\overline{D},\R^n)$ is sufficient. Indeed, in this case we can see that we have  density of 
$\{ \phi \in C^\infty(\overline{D},\R^N)\colon \phi \in T_{\solu} \mathcal{N}\}$ in $\{\phi \in W^{1,p}(D,\R^N)\colon \phi\big\rvert_{\partial D}\in T_\solu\mathcal{N}\}$.
}}
\end{remark}

%
%
The natural starting point, when studying equations of the form \eqref{eq:soludef}, is the regularity theory. The interior regularity is known and follows from the interior equation and results of  \cite{Uhlenbeck-1977,tolksdorf1984regularity}, see also the recent \cite{Kuusi-Mingione-2017}. Hence, the main difficulty is the regularity up to the boundary.  For an arbitrary manifold ${\mathcal{N}}$ a regularity theory for a solution \eqref{eq:soludef} is out of reach: even the regularity theory for the interior problem
\[
 \dv (|\nabla \solu|^{p-2} \nabla \solu) \perp T_\solu {\mathcal{N}}
\]
is known only for homogeneous targets ${\mathcal{N}}$, see Fuchs \cite{Fuchs-1993}, Takeuchi \cite{Takeuchi-1994}, Toro and Wang \cite{ToroWang-1995}, Strzelecki \cite{Strzelecki-1994,Strzelecki-1996}, and also the recent survey~\cite{Schikorra-Strzelecki}. For this reason we shall restrict our attention to the sphere $\S^{N-1} \subset \R^N$. In the rest of the paper we consider the problem:
\begin{equation}\label{eq:soluspheredef}
\begin{cases}
\dv (|\nabla \solu|^{p-2} \nabla \solu) = 0 \quad &\mbox{in $D$}\\
|\nabla \solu|^{p-2}\partial_\nu \solu\perp T_\solu \S^{N-1} \quad &\mbox{on $\partial D$}\\
u(\partial D)\subset \S^{N-1}.
\end{cases}
\end{equation}
We remark that the free boundary conditions can be viewed as boundary conditions mixed between Dirichlet and homogeneous Neumann boundary conditions. Indeed, in the sphere case we have a Dirichlet boundary condition for the norm of $u$: $|u|=1$ on $\partial D$ and homogeneous Neumann condition for the ``phase" $\partial_\nu \left(\frac{u}{|u|}\right)=0$. To see that in the case of a general manifold we can use Fermi coordinates near some points of ${\mathcal{N}}$ as explained in \cite[p.938-939]{Fraser2000} in the context of minimal surfaces with free boundaries (for more on minimal surfaces with free boundaries see also \cite{FraserSchoen2013} and the references therein).

Our main theorem is the following $\epsilon$-regularity type theorem.
\begin{theorem}[$\epsilon$-regularity]\label{th:main}
Let $D \subset \R^n$ be a smooth, bounded domain and $p\geq 2$. Then there exist $\epsilon = \epsilon(p,n,D)>0$ and $\alpha = \alpha(p,n,D)> 0$, such that for any $\solu \in W^{1,p}(D,\R^N)$ solution to \eqref{eq:soluspheredef} the following holds:
If for some $R > 0$ and for some $x_0 \in \overline{D}$
\begin{equation}\label{eq:thmain:smallnesscond}
 \sup_{|y_0-x_0| < R}\ \sup_{\rho < R} \rho^{p-n} \int_{B(y_0,\rho)\cap D} |\nabla \solu|^p < \epsilon,
\end{equation}
then $\solu$ and $\nabla \solu$ are H\"older continuous in $B(x_0,R/2) \cap \overline{D}$. Moreover, we have the following estimates:
\[
 \sup_{x, y \in B(x_0,R/2)}  \frac{|u(x)-u(y)|}{|x-y|^{\alpha}} \aleq R^{-\alpha} \brac{\sup_{|y_0-x_0| < R}\ \sup_{\rho < R} \rho^{p-n} \int_{B(y_0,\rho)\cap D} |\nabla \solu|^p}^{\frac{1}{p}}
\]
and
\[
 \sup_{x, y \in B(x_0,R/2)}  \frac{|\nabla u(x)-\nabla u(y)|}{|x-y|^{\alpha}} \aleq R^{-\alpha-1} \brac{\sup_{|y_0-x_0| < R}\ \sup_{\rho < R} \rho^{p-n} \int_{B(y_0,\rho)\cap D} |\nabla \solu|^p}^{\frac{1}{p}}.
\]
\end{theorem}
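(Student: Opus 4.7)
My plan has four steps. \emph{Reduction.} Since $\p D$ is smooth, a bi-Lipschitz diffeomorphism flattens $\p D \cap B(x_0,R)$ to $\{x_n=0\}$, transforming \eqref{eq:soluspheredef} into a system of the same structural form on a half-ball $B^+$, with coefficients close to Euclidean for small $R$ and with the smallness condition \eqref{eq:thmain:smallnesscond} preserved up to multiplicative constants. Interior regularity is classical by \cite{Uhlenbeck-1977,tolksdorf1984regularity}, so all the difficulty concentrates on the flat boundary piece $T = B \cap \{x_n=0\}$.

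\emph{Growth estimate via a good frame.} This is the heart of the argument, and is where I would follow the spirit of the third-named author's recent work on free boundary harmonic maps. I would choose a moving orthonormal frame $\{e_1(x),\dots,e_{N-1}(x)\}$ of $T_{u(x)}\S^{N-1}$ along $T$ solving a Coulomb-type gauge condition adapted to the weight $|\nabla u|^{p-2}$. Testing the weak formulation \eqref{eq:def} with $\varphi = \chi\, (e_\alpha\cdot v)\, e_\alpha$ for suitable cut-offs $\chi$ and tangent vectors $v$, and combining the antisymmetry $e_\alpha\cdot \p_\beta e_\alpha = 0$ with the pointwise identity $u\cdot \nabla u = 0$ valid on $T$ (since $|u|^2\equiv 1$ there), yields a hidden cancellation of the otherwise borderline quadratic terms in $\nabla u$. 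Coupled with \eqref{eq:thmain:smallnesscond} and a hole-filling iteration, this should produce a uniform decay
\begin{equation*}
\rho^{p-n} \int_{B(y_0,\rho)\cap D} |\nabla u|^p \aleq \brac{\rho/R}^{p\beta}\, \epsilon
\end{equation*}
for some $\beta = \beta(p,n) \in (0,1)$ and all admissible $y_0, \rho$.

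\emph{From growth to H\"older.} If $p=n$, the previous estimate is a Campanato-type bound of order $n\beta$ and the classical Morrey embedding gives $u \in C^{0,\beta}_{\loc}$ directly. If $p<n$, the growth estimate alone is insufficient; here I would apply a Scheven-style reflection across $T$ to extend $u$ to a full ball, producing an interior $p$-harmonic map into an auxiliary (non-round, non-homogeneous) target manifold. Although the regularity theory for such targets is in general unavailable, once the Morrey decay from the previous step is at our disposal the standard interior $\epsilon$-regularity scheme for $p$-harmonic maps into arbitrary manifolds closes and delivers H\"older continuity of $u$ up to $T$. Once $u \in C^{0,\alpha}$, the boundary condition $|\nabla u|^{p-2}\p_\nu u \perp T_u \S^{N-1}$ becomes a controlled perturbation of a linear oblique/Neumann condition, and a standard freezing-and-comparison argument for the degenerate $p$-Laplace system up to the boundary (\`a la \cite{tolksdorf1984regularity}) upgrades the H\"older estimate from $u$ to $\nabla u$. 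The quantitative bounds in the theorem are obtained by tracking constants through each step.

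The principal obstacle will be the frame construction in the second step: the moving frame must simultaneously be well-defined in the degenerate regime where $|\nabla u|$ can vanish, expose enough antisymmetric structure to absorb the critical terms, and interact cleanly with the $|\nabla u|^{p-2}$ weight throughout the test-function computation. In contrast, the reflection step in the supercritical regime is relatively routine once the growth estimate is in hand.
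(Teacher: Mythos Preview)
Your four-step architecture matches the paper's, but two of the steps differ in substance and one of them contains a real gap.

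\textbf{Growth estimate.} The paper does \emph{not} construct a Coulomb frame of $T_{u}\S^{N-1}$. Instead it exploits the sphere structure directly: testing with $\varphi - u\langle u,\varphi\rangle$ (which is tangential on $\partial D$) yields a Shatah-type conservation law $\div\!\big(|\nabla u|^{p-2}(u^i\nabla u^j - u^j\nabla u^i)\big)=0$ that holds \emph{up to the boundary}, plus residual terms containing $u^k\nabla u^k=\tfrac12\nabla(|u|^2-1)$. The point is that $|u|^2-1\in W^{1,p}_0$ along the flat boundary piece, so these residuals are controlled by Poincar\'e. A Hodge decomposition then exhibits a genuine div--curl pairing on which the CLMS lemma applies. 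Your proposed Coulomb frame lives only on $T$ (since $u\notin\S^{N-1}$ in the interior) and would have to be extended; the paper's device avoids this entirely.

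\textbf{The gap.} Your Step~2 asserts that hole-filling produces the Morrey decay $\rho^{p-n}\!\int_{B_\rho}|\nabla u|^p\aleq(\rho/R)^{p\beta}\epsilon$. This is correct for $p=n$ but fails for $p<n$: hole-filling on an inequality of the shape $\int_{B_r}\le C(\lambda+\mu^{p-1})\int_{B_{4r}}+C\mu^{-1}\int_{B_{4r}\setminus B_r}$ gives $\int_{B_r}\le\tau\int_{B_{4r}}$ with $\tau<1$ but \emph{not} with $\tau<4^{p-n}$, so the scaled quantity $\rho^{p-n}\!\int_{B_\rho}$ need not decay. The paper accordingly does not claim Morrey decay out of the growth estimate; it proves a Caccioppoli-type inequality (Proposition~\ref{pr:growth}) containing the lower-order terms $r^{-p}\!\int|u-(u)_B|^p$ and $r^{-p}\!\int\big||u|^2-1\big|^p$, and it is precisely these terms that later drive the supercritical argument.

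\textbf{Supercritical step.} Your Step~3 for $p<n$ invokes ``the standard interior $\epsilon$-regularity scheme for $p$-harmonic maps into arbitrary manifolds''. No such scheme exists --- that is the whole obstruction the paper is built around. What the paper does instead: the Scheven reflection produces $v$ solving $|\div(|\nabla v|^{p-2}\nabla v)|\aleq|\nabla v|^p$ on a full ball, and the growth inequality of Proposition~\ref{pr:growth} (with its lower-order terms) transfers verbatim to $v$. One then runs the Hardt--Kinderlehrer--Lin / Hardt--Lin blow-up (Proposition~\ref{pr:decay}): rescale a putative bad sequence, pass to a weak limit $w$ solving the homogeneous $p$-Laplace system (hence Lipschitz), and feed the resulting smallness of $|u_i-(u_i)_B|$ back into the growth inequality to force the contradiction. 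The lower-order terms are indispensable here; a bare estimate $\int_{B_r}\le\tau\int_{B_{4r}}$ would not close the loop.

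\textbf{$C^{1,\alpha}$ upgrade.} The paper stays with the reflected interior equation and quotes \cite[Theorem~3.1]{Hardt-Lin-1987} (continuity of the solution suffices). Your freezing-and-comparison route at the original free boundary is plausible but is not what is done here.
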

When $p=n$ this $\epsilon$-regularity implies directly (from the absolute continuity of the Lebesgue integral) that $n$-harmonic maps with free boundary {and their gradients} are H\"older continuous.
\begin{corollary}
Let $u$ and $\alpha$ be as in Theorem \ref{th:main} with $p=n$ then $u$ is in $C^{1,\alpha}(\overline{D},\R^N)$.
\end{corollary}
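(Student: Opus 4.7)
The plan is to deduce the corollary directly from Theorem \ref{th:main} by exploiting the fact that when $p=n$, the scaling exponent $p-n$ vanishes, so the smallness condition \eqref{eq:thmain:smallnesscond} reduces to
\[
 \sup_{|y_0-x_0| < R}\ \sup_{\rho < R} \int_{B(y_0,\rho)\cap D} |\nabla \solu|^n < \epsilon,
\]
which involves no scaling factor and is genuinely a smallness property of $|\nabla u|^n$ over small balls.

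First I would fix $\epsilon = \epsilon(n,D) > 0$ from Theorem \ref{th:main}. Since $u \in W^{1,n}(D,\R^N)$, the function $|\nabla u|^n$ lies in $L^1(D)$. By absolute continuity of the Lebesgue integral, there exists $\delta > 0$ such that for every measurable set $E \subset D$ with $|E|<\delta$ one has $\int_E |\nabla u|^n < \epsilon$. Choosing $R>0$ small enough that $|B(y_0,\rho)\cap D|\le |B(y_0,\rho)|<\delta$ whenever $\rho < R$ (independent of $y_0$), the smallness condition \eqref{eq:thmain:smallnesscond} is satisfied at every $x_0\in\overline{D}$ with this same $R$.

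Next I would apply Theorem \ref{th:main} with this uniform $R$: for each $x_0\in\overline{D}$, both $u$ and $\nabla u$ are H\"older continuous of some exponent $\alpha=\alpha(n,D)>0$ on $B(x_0,R/2)\cap\overline{D}$, with quantitative bounds given by the theorem. Since $\overline{D}$ is compact, it can be covered by finitely many such half-balls $B(x_i, R/2)\cap\overline{D}$. Combining the local H\"older estimates on this finite cover and using a standard chaining argument on pairs of points in different balls (bridging through a point of overlap) yields a global modulus of continuity for both $u$ and $\nabla u$, hence $u \in C^{1,\alpha}(\overline{D},\R^N)$.

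There is no real obstacle here; the only subtle point is that absolute continuity must be invoked in a form that gives a radius $R$ independent of the center $y_0$, which is immediate because the bound depends only on the Lebesgue measure of the integration set. Everything else is bookkeeping on the finite cover of $\overline{D}$.
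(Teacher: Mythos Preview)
Your proposal is correct and matches exactly the approach the paper indicates: the corollary is stated immediately after Theorem~\ref{th:main} with the parenthetical remark that it follows ``directly (from the absolute continuity of the Lebesgue integral),'' and no further proof is given. Your write-up simply fills in the details of that one-line hint --- absolute continuity gives a uniform radius at which \eqref{eq:thmain:smallnesscond} holds everywhere, and compactness of $\overline{D}$ upgrades the local $C^{1,\alpha}$ conclusion to a global one.
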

As usual, an $\epsilon$-regularity result such as Theorem~\ref{th:main} implies partial regularity for \emph{stationary} $p$-harmonic maps {with free boundary} (cf.\ \eqref{eq:firstvariation} for the definition).
\begin{theorem}[partial regularity]\label{th:partialregularity}
Let $D \subset \R^n$ be a smooth, bounded domain, $p\geq 2$, and assume that $\solu \in W^{1,p}(D,\R^N)$, with trace $u \in W^{1-\frac{1}{p},p}(\partial D, \S^{N-1})$, is a \emph{stationary} point of the energy \eqref{eq:penergy} {with free boundary}.
Then there exists a closed set $\Sigma \subset \overline{D}$ such that $\mathcal{H}^{n-p}(\Sigma) {=0}$ and $u \in C^{1,\alpha}(\overline{D} \backslash \Sigma)$, where $\alpha>0$ is from Theorem \ref{th:main}.
\end{theorem}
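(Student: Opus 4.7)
The plan is to combine Theorem~\ref{th:main} with a monotonicity formula valid up to the free boundary. Stationarity is to be interpreted via domain variations: one requires $\frac{d}{dt}\big|_{t=0}\int_D|\nabla(u\circ\Phi_t)|^p\,dy=0$ for every smooth one-parameter family of diffeomorphisms $\Phi_t:\overline D\to\overline D$ with $\Phi_0=\id$ and $\Phi_t(\partial D)\subseteq\partial D$, the latter condition being needed to preserve membership in $\mathcal{A}$. Setting $X=\partial_t\Phi_t\big|_{t=0}$ and differentiating, this produces
\[
\int_D\bigl(|\nabla u|^p\,\dv X\;-\;p\,|\nabla u|^{p-2}\,\p_iu\cdot\p_ju\,\p_jX^i\bigr)\,dy=0
\]
for every $X\in C^\infty(\overline D,\R^n)$ tangent to $\partial D$. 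To obtain the monotonicity I would plug in the tangential radial field $X(y)=\eta(|y-x_0|)\bigl(y-x_0-\langle y-x_0,\tilde\nu(y)\rangle\tilde\nu(y)\bigr)$, where $\tilde\nu$ is a smooth extension of the outer unit normal of $\partial D$ near $x_0$ and $\eta$ is a one-sided cutoff. Smoothness of $\partial D$ makes the normal correction $O(|y-x_0|^2)$; the usual computation then yields the almost-monotonicity estimate
\[
r^{p-n}\int_{B(x_0,r)\cap D}|\nabla u|^p\,dy\;\leq\;C\,R^{p-n}\int_{B(x_0,R)\cap D}|\nabla u|^p\,dy\qquad(0<r<R<R_0),
\]
with constants $C$, $R_0$ depending only on $p$, $n$, $D$. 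For an interior base point $x_0$ the classical radial field $\eta(|y-x_0|)(y-x_0)$ gives the same inequality.

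Next, define the singular set
\[
\Sigma:=\Bigl\{x_0\in\overline D\,:\,\liminf_{r\to 0}r^{p-n}\!\int_{B(x_0,r)\cap D}|\nabla u|^p\,dy\;\geq\;\eps_0\Bigr\}
\]
for a constant $\eps_0=\eps_0(p,n,D)$ to be fixed. If $x_0\notin\Sigma$, there is $R\in(0,R_0)$ with $R^{p-n}\!\int_{B(x_0,R)\cap D}|\nabla u|^p<\eps_0/C$. Applying the monotonicity at every base point $y_0\in B(x_0,R/4)$ and every $\rho<R/4$, using $B(y_0,\rho)\subset B(x_0,R)$, bounds $\rho^{p-n}\!\int_{B(y_0,\rho)\cap D}|\nabla u|^p$ by a dimensional multiple of $\eps_0$. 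Choosing $\eps_0$ small enough that this multiple is below the threshold $\eps$ from Theorem~\ref{th:main} forces the smallness hypothesis \eqref{eq:thmain:smallnesscond} to hold at $x_0$ on scale $R/4$; hence $u\in C^{1,\alpha}$ in a neighbourhood of $x_0$. Therefore $\overline D\setminus\Sigma$ is open and $u\in C^{1,\alpha}(\overline D\setminus\Sigma)$.

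For the dimension estimate, each $x_0\in\Sigma$ admits arbitrarily small radii $r$ with $r^{p-n}\!\int_{B(x_0,r)\cap D}|\nabla u|^p\geq \eps_0/2$. A Vitali $5r$-covering argument based on this lower bound, combined with the absolute continuity of $|\nabla u|^p\,dy$ with respect to Lebesgue measure, gives first $\mathcal H^{n-p}(\Sigma)<\infty$, hence $|\Sigma|_{\mathrm{Leb}}=0$; then, by choosing the Vitali balls inside a shrinking neighbourhood of $\Sigma$ and using absolute continuity once more, one upgrades this to $\mathcal H^{n-p}(\Sigma)=0$. The main obstacle lies in the monotonicity formula itself: at a free-boundary point the usual radial generator is inadmissible because its normal component would push $\partial D$ off itself, violating the constraint $\Phi_t(\partial D)\subseteq\partial D$ that is imposed by $u(\partial D)\subset\S^{N-1}$. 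One must therefore work with the tangential projection of the radial field and absorb the resulting curvature error into the multiplicative constant $C$. Once this is in place, the remainder reduces to standard covering arguments and an appeal to Theorem~\ref{th:main}.
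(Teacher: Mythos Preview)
Your overall architecture---monotonicity formula plus Theorem~\ref{th:main} plus a Frostman/Vitali measure argument---is exactly the paper's strategy. Two concrete issues in your monotonicity step, however, need fixing.

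First, the tangential field you wrote down does not produce the monotonicity. For $x_0\in\partial D$ the correction $\langle y-x_0,\tilde\nu(y)\rangle\tilde\nu(y)$ is $O(|y-x_0|^2)$ only \emph{along} $\partial D$ (that is the second-fundamental-form bound); for interior points $y\in D$ it is of order $|y-x_0|$, since you are subtracting the full normal component of $y-x_0$. Thus your $X$ is essentially the projection of the radial field onto the hyperplane $\tilde\nu(x_0)^\perp$, its divergence has leading term $(n-1)\eta$ rather than $n\eta$, and the computation yields the wrong power of $r$. The paper sidesteps this by first flattening the boundary: after a local diffeomorphism one works in $B^+(0,R)\subset\R^n_+$, where the \emph{unmodified} radial field $\psi(|y-x_0|)(y-x_0)$ is already tangent to $\partial\R^n_+$ whenever $x_0\in\partial\R^n_+$, and one obtains the exact identity~\eqref{eq:mono} with no curvature error.

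Second, you do not treat the intermediate case of an interior base point $y_0$ whose ball $B(y_0,\rho)$ meets $\partial D$. There the pure radial field is inadmissible (not tangent on $\partial D$), and no monotonicity centred at $y_0$ is directly available for those radii. The paper handles this explicitly: project $y_0$ to its nearest boundary point $\overline{y}_0$, use interior monotonicity up to scale $\overline\rho=\dist(y_0,\partial D)$, then boundary monotonicity at $\overline{y}_0$ for the larger scales, and chain the two via $B(y_0,\overline\rho)\subset B^+(\overline{y}_0,2\overline\rho)$. This case split is needed precisely at your line ``applying the monotonicity at every base point $y_0\in B(x_0,R/4)$''.

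Your Vitali covering for $\mathcal H^{n-p}(\Sigma)=0$ is fine; the paper instead quotes Frostman's lemma, which packages the same argument.
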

{\begin{remark}
        Although some of our results work for unbounded domains we note that finite energy, stationary $p$-harmonic maps with with free boundary satisfy a Liouville type theorem, cf. Proposition \ref{pr:Liouvilletype}. This is why we focus on bounded domains.
       \end{remark}
}
Moreover, besides giving regularity in the case $p=n$ and partial regularity in the case $p<n$, an $\epsilon$-regularity could be useful to describe the possible loss of compactness of sequences of $n$-harmonic maps with free boundaries and an energy decomposition theorem. In the case $p=n=2$, i.e., for harmonic maps with free boundaries such a result was proven in \cite{DaLio2015,LaurainPetrides2017}. Our case requires completely different methods, due to the nonlinearity of the $p$-Laplacian for $p \neq 2$.

Let us comment on our strategy for the proof of Theorem~\ref{th:main}. The natural first attempt to prove a result like Theorem~\ref{th:main} is to adapt the beautiful geometric reflection method that Scheven used in \cite{Scheven-2006} to obtain an $\epsilon$-regularity result up to the free boundary for harmonic maps, i.e., for the case $p=2$ (see also \cite{BerlyandMironescu2004} where the authors also devised a reflection technique to prove regularity up to the boundary of solutions of some Ginzburg-Landau equations with free boundary conditions). This way, one would hope to be able to rewrite the Neumann condition at the boundary into an interior equation. For $p=2$ the reflected equation has again the structure of a harmonic map (with a new metric in the reflected domain). Thus, the regularity theory for harmonic maps with a free boundary follows from the interior regularity for harmonic maps developed by H\'elein \cite{Helein-1991}, see also \cite{Riviere-2007}. For $p > 2$ there is a major drawback to that strategy: as mentioned above, the regularity theory for the interior $p$-harmonic map equation is only understood for round targets. It was not clear to us, how to interpret the reflected equation as a map into such a round target. The reflection, which generates a somewhat ``unnatural metric'' seems to destroy our boundary sphere-structure. Indeed, up to now, only the regularity theory for \emph{minimizing} $p$-harmonic maps with free boundary was understood, see \cite{Duzaar-Gastel-1998,Mueller-2002} {where it is shown that such a map is in $C^{1,\alpha}$, for some $\alpha$, outside a singular set $\mathcal{S}$ with $\text{dim}_\mathcal{H}(\mathcal{\mathcal{S}})=n-\lfloor p \rfloor-1$ and $\mathcal{S}$ is discrete if $n-1\leq p<n$}. For $p=2$ free boundary problems for \emph{minimizing} harmonic maps were studied in \cite{Duzaar-Steffen-1989,Hardt-Lin-1989}.

In this work we follow in spirit the recent work of the third-named author \cite{Schikorra-2017} which does not use a reflection technique, but rather computes an equation along the free boundary and applies a moving frame technique to this free boundary part of the equation itself. This strategy leads to \emph{growth} estimates, Proposition~\ref{pr:growth}, which for the critical case $n = p$ implies directly H\"older regularity of solutions. Once the growth estimates are established we can apply the reflection. Since the reflection is explicit, it is easy to see that the growth estimates still hold for the reflected solution, which we shall call $v$. Now $v$ solves a critical or super-critical equation of the form
\[
 |\dv (|\nabla v|^{p-2} \nabla v)| \aleq |\nabla v|^p.
\]
In principle, solutions to this equation may be singular, e.g.,  $x/|x|$ or $\log \log 1/|x|$. But with the growth estimates from Proposition~\ref{pr:growth}, which transfers to $v$, one can employ a blow-up argument due to \cite{Hardt-Kinderlehrer-Lin-1986,Hardt-Lin-1987} and then bootstrap for higher regularity.

The outline of the paper is as follows: In Section~\ref{s:growth} we state and prove the crucial growth estimate for solutions to {\eqref{eq:soluspheredef}}. In Section~\ref{s:pnHoelder} we show how this implies H\"older continuity of solutions for the case $p=n$. For $p < n$ we show in Section~\ref{s:genericHoelder} how a generic super-critical system implies H\"older regularity of solutions once the growth estimates from Proposition~\ref{pr:growth} are guaranteed. Combining this with Scheven's reflection argument, we give in Section~\ref{s:proofthmain} the proof of Theorem~\ref{th:main}. Finally, in Section~\ref{s:partialregularity}, we prove the partial regularity of solutions, i.e., Theorem~\ref{th:partialregularity}.

{\textbf {Notation.}} We denote by $B(x,r)$ the ball of radius $r$ centered at $x\in\R^n$. We write $\R^n_+=\R^{n-1}\times(0,\infty)$, $\R^n_-=\R^{n}\times(-\infty,0)$, and $B^+(x,r) = B(x,r)\cap\R^n_+$. By $(u)_\Omega$ we denote the mean value of a map $u$ on a set $\Omega$, i.e., $(u)_\Omega= \frac{1}{|\Omega|}\int_\Omega u $.

\section{The growth estimates}\label{s:growth}
{{Recall that} we assume that $D$ is a bounded set with a smooth boundary. In view of the Lemma~\ref{la:boundedness} we know that $|u| \leq 1$ holds for any solution to \eqref{eq:soluspheredef}. The arguments can be also extended to unbounded domains like $\R^n_+$ under the assumption that $u \in L^\infty_{loc}(\R^n_+)$, cf. Lemma \ref{la:boundedness2}. Note that in principle, the constants may depend on the $L^\infty$-norm of $u$.}

The main result in this section, and the crucial argument in this work, is the following growth estimate that one could interpret as a kind of Caccioppoli type estimate. We were not able to obtain such an estimate by a geometric reflection argument, since that reflection changes the metric, and only in the case of round targets, such as the sphere, regularity theory (and in particular the related growth estimates) are known.

\begin{proposition}[Growth estimates]\label{pr:growth}
Let $p\ge2$. There exists a radius $R_0$ depending only on $\partial D$ such that for any $u\in W^{1,p}(D,\R^N)$ satisfying \eqref{eq:soluspheredef} the following holds:

Whenever $B(x_0,R) \subset \R^n$, $R \in (0,R_0)$ is such that for some $\lambda \in (0,\infty)$ it holds
\begin{equation}\label{small}
 \sup_{B(y_0,r)\subset  B(x_0,R)} r^{p-n} \int_{B(y_0,r)\cap D} |\nabla \solu|^p < \lambda^p,
\end{equation}
then for any $B(y_0,4r)\subset B(x_0,R)$ and any $\mu > 0$,
\begin{equation}\label{eq:growth:peqn}
\int_{B(y_0,r)\cap D} |\nabla u|^p \leq C\ \brac{\lambda + \mu^{p-1}} \int_{B(y_0,4r)\cap D} |\nabla u|^p + C\mu^{-1} \int_{(B(y_0,4r)\backslash B(y_0,r))\cap D} |\nabla u|^p.
\end{equation}
Alternatively, we have the following estimates:

If $B(y_0,2r) \backslash D = \emptyset$, then
\begin{equation}\label{eq:growth:pllnbd0}
 \int_{{B(y_0,r)}} |\nabla u|^p \leq C \lambda \int_{B(y_0,4r)\cap D} |\nabla u|^p + C \lambda^{1-p} r^{-p}\int_{B(y_0,4r)\cap D} |\solu-(\solu)_{B(y_0,4r){\cap D}}|^p.
\end{equation}
If $B(y_0,2r) \backslash D \neq \emptyset$, then
\begin{equation}\label{eq:growth:pllnbd}
\begin{split}
 \int_{B(y_0,r)\cap D} |\nabla u|^p &\leq C \lambda \int_{B(y_0,4r)\cap D} |\nabla u|^p + C \lambda^{1-p} r^{-p}\int_{B(y_0,4r)\cap D} |\solu-(\solu)_{B(y_0,4r){\cap D}}|^p\\
 &\quad+ C\lambda^{1-p}r^{-p}\int_{B(y_0,4r)\cap D} ||\solu|^2-1|^p,
\end{split}
 \end{equation}
for a constant $C=C(n,p,D)$.
\end{proposition}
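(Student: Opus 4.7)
The plan is to test the weak equation $\int_D |\nabla u|^{p-2} \nabla u \cdot \nabla \phi = 0$ from \eqref{eq:def} with a cutoff function $\phi$ that respects the free-boundary tangentiality constraint $\phi(x) \in T_{u(x)}\S^{N-1}$ along $\partial D$. Fix a standard cutoff $\eta \in C_c^\infty(B(y_0, 4r))$ with $\eta \equiv 1$ on $B(y_0, r)$ and $|\nabla \eta| \leq C/r$, and set $c := (u)_{B(y_0, 4r) \cap D}$. When $B(y_0, 2r) \setminus D = \emptyset$ the ball lies strictly inside $D$, the boundary constraint is vacuous, and a classical interior Caccioppoli test with $\phi = \eta^p(u - c)$ --- combined with H\"older and \eqref{small} to extract the factor $\lambda$ --- yields \eqref{eq:growth:pllnbd0} after a Poincar\'e inequality on $B(y_0, 4r)$.

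In the boundary subcase $B(y_0, 2r) \setminus D \neq \emptyset$ I would follow the moving-frame strategy of \cite{Schikorra-2017}. Locally on $B(y_0, 4r)$ (which meets $\partial D$ only on its smooth part thanks to $R < R_0$) construct an orthonormal frame $e_1, \ldots, e_{N-1}$ with $e_i(x) \in T_{u(x)}\S^{N-1}$ for $x \in \partial D$ --- for instance by Gram--Schmidt against $u$, followed by a Coulomb-type rotation that optimizes the connection one-form. For each $i$, test \eqref{eq:def} with
\[
 \phi_i := \eta^p \bigl((u - c)\cdot e_i\bigr)\, e_i,
\]
which lies in $T_u \S^{N-1}$ on $\partial D$ and is therefore admissible. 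Summing the $N-1$ tests recovers $\eta^p(u-c)$ up to its $u$-component $\eta^p((u-c)\cdot u)\, u$; since $(u-c)\cdot u = (|u|^2 - 1) + (1 - c\cdot u)$ and $u\cdot\nabla u = \tfrac12\nabla(|u|^2 - 1)$, this discrepancy is absorbed into exactly the $r^{-p}\int ||u|^2 - 1|^p$ correction of \eqref{eq:growth:pllnbd}. Expanding $\nabla \phi_i$ then produces the main Caccioppoli term $\int \eta^p|\nabla u|^p$, a $\nabla\eta$-contribution localized on the annulus that Young's inequality with parameter $\mu$ splits into $\mu^{p-1}\int|\nabla u|^p + \mu^{-1}\int_{B(y_0,4r)\setminus B(y_0,r)}|\nabla u|^p$ (the annular term of \eqref{eq:growth:peqn}), and frame-derivative terms $|\nabla e_i|$ controlled by the Coulomb-gauge estimate $\|\nabla e\|_{L^p} \lesssim \|\nabla u\|_{L^p}$ which, together with \eqref{small}, fold into the $\lambda \int |\nabla u|^p$ term.

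The main obstacle will be the algebraic bookkeeping around the frame: I need to verify that the normal-component discrepancy really collapses into the single $||u|^2 - 1|^p$ correction of \eqref{eq:growth:pllnbd} and vanishes identically in the interior geometry producing \eqref{eq:growth:pllnbd0}, and to construct the Coulomb frame with the quantitative bounds above on a full neighborhood of $\partial D$. The smallness \eqref{small} is indispensable here because the tangential projection and the Coulomb gauge both generate nonlinear errors of order $|\nabla u|^p$ that would otherwise prevent the Caccioppoli from closing; \eqref{small} provides the small factor $\lambda$ needed for absorption. Finally, to pass from \eqref{eq:growth:peqn} to \eqref{eq:growth:pllnbd0} and \eqref{eq:growth:pllnbd} I would apply a uniform Poincar\'e inequality on $B(y_0, 4r) \cap D$, whose constant is controlled by the smoothness of $\partial D$ via a local straightening argument --- this is what dictates the radius restriction $R < R_0$.
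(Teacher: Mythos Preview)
Your plan differs substantially from the paper's proof, and the boundary case contains a genuine gap.

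\textbf{What the paper actually does.} For the sphere target the paper does not build a moving frame or a Coulomb gauge. Instead it exploits the explicit Shatah-type conservation law
\[
\div\bigl(|\nabla u|^{p-2}\Omega_{ij}\bigr)=0,\qquad \Omega_{ij}=u^i\nabla u^j-u^j\nabla u^i,
\]
which, crucially, holds \emph{up to the free boundary} (Lemma~\ref{la:conservation}). Testing with $\tilde u=\eta(u-(u)_{B^+})$ and using Lemma~\ref{la:goodpde} produces three terms $I,II,III$. For $I$ one performs a Hodge decomposition of $|\nabla u|^{p-2}\Omega_{ik}$ on a domain chosen so that the $\operatorname{Curl}$ part has zero boundary data; the gradient part vanishes by the conservation law, leaving $|\nabla u|^{p-2}\Omega_{ik}=\operatorname{Curl}\zeta_{ik}$, which can be extended by zero across $\partial\R^n_+$. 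One then applies the Coifman--Lions--Meyer--Semmes div--curl estimate
\[
\Bigl|\int \operatorname{Curl}\zeta_{ik}\cdot\nabla u^k\,\tilde u^i\Bigr|\aleq\|\nabla u\|_{L^p}^{p}\,[\tilde u]_{BMO},
\]
and the small factor comes from $[\tilde u]_{BMO}\aleq\lambda$, which is exactly what the Morrey-type smallness \eqref{small} provides via Poincar\'e. Terms $II,III$ are handled similarly, using that $|u|^2-1$ vanishes on $\partial\R^n_+$ so that its zero extension stays in $W^{1,p}$.

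\textbf{The gap in your argument.} Your mechanism for extracting the factor $\lambda$ from the frame-derivative terms does not work. After testing with $\phi_i=\eta^p((u-c)\cdot e_i)e_i$ you obtain cross terms of the schematic form
\[
\int \eta^p\,|\nabla u|^{p-2}\,\nabla u\cdot\nabla e_i\;((u-c)\cdot e_i).
\]
The bound $\|\nabla e\|_{L^p}\aleq\|\nabla u\|_{L^p}$ combined with H\"older only yields $\|\nabla u\|_{L^p(B_{4r})}^{p-1}\|\nabla e\|_{L^p}\,\|u-c\|_{L^\infty}\aleq\|\nabla u\|_{L^p(B_{4r})}^{p}$, with \emph{no} small prefactor; the Caccioppoli inequality then cannot close. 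Writing one factor as $\|\nabla u\|_{L^p(B_{4r})}\leq \lambda\,(4r)^{(n-p)/p}$ leaves a divergent power of $r$ when $p<n$. The only known way to squeeze out the gain is through a div--curl structure paired against $BMO$: even in a Coulomb-gauge approach the point is that the connection form $e_i\cdot\nabla e_j$ becomes divergence-free, after which one \emph{still} has to invoke CLMS and the $BMO$ bound on $\tilde u$. Your proposal skips this entirely.

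There is a secondary difficulty you do not address: near the free boundary the div--curl lemma requires at least one of the factors to extend across $\partial D$ (there are counterexamples otherwise, cf.~\cite{DaLio-Palmurella-2017,Hirsch-2019}). The paper arranges this by the Hodge decomposition with zero Dirichlet data on $\zeta_{ik}$ and by the fact that $|u|^2-1$ vanishes on $\partial D$. A Coulomb gauge constrained to satisfy $e_i\in T_u\S^{N-1}$ on $\partial D$ will in general not give you such an extension, and you would have to re-engineer the boundary behaviour of the gauge from scratch.
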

%
Our strategy, in principle, is to adapt the method for harmonic maps into spheres developed by H\'elein \cite{Helein-1990}, see Strzelecki's \cite{Strzelecki-1994} for the $n$-harmonic case. To motivate our approach, we briefly outline their strategy for a $p$-harmonic map $w\in W^{1,p}(D,\S^{N-1})$, i.e., a solution to
\begin{equation}\label{eq:pharmmapeq}
 \div(|\nabla w|^{p-2} \nabla w) \perp T_w \S^{N-1}.
\end{equation}
The first step is to rewrite this equation. Since $w \in \S^{N-1}$ we have $w \in \brac{T_w \S^{N-1}}^\perp$. Consequently, \eqref{eq:pharmmapeq} can be rewritten in distributional sense as
\begin{equation}\label{eq:pharmmapeq2}
 \int_{D} |\nabla w|^{p-2} \nabla w^i\cdot \nabla \varphi = \int_{D} |\nabla w|^{p-2} \nabla w^k \cdot \nabla (w^k w^i \varphi), 
\end{equation}
which holds for all $\varphi \in C_c^\infty(D)$ and $i = 1,\ldots,N$. Here and henceforth, we use the summation convention.

Next, from $|w| \equiv 1$, we get $w^k \nabla w^k  \equiv \frac{1}{2} \nabla |w|^2 = 0$. Consequently, \eqref{eq:pharmmapeq2} can be written as
\begin{equation}\label{eq:pharmmapeq3}
 \int_{D} |\nabla w|^{p-2} \nabla w^i\cdot \nabla \varphi = \int_{D} |\nabla w|^{p-2} \nabla w^k \cdot \brac{\nabla w^k\ w^i - \nabla w^i\ w^k} \varphi.
\end{equation}
Now one observes that from \eqref{eq:pharmmapeq2} a conservation law follows, a fact that for $p=n=2$ was discovered by Shatah \cite{Shatah-1988},
\begin{equation}\label{eq:pharmmapeq3conslaw}
 \div\brac{|\nabla w|^{p-2} \brac{\nabla w^k\ w^i - \nabla w^i\ w^k} } = 0 \quad \mbox{in $D$}.
\end{equation}
Thus, $|\nabla w|^{p-2} \nabla w^k \cdot \brac{\nabla w^k\ w^i - \nabla w^i\ w^k}$ is a div-curl term and with the help of the celebrated result of Coifman, Lions, Meyer, and Semmes, \cite{CLMS}, one obtains a growth estimate.

The above argument heavily relied on the fact that $w^k \nabla w^k \equiv 0$. It is important to observe that this trick will not work in the situation from Theorem~\ref{th:main}: if we only know that $u\Big|_{\partial D} \subset \S^{N-1}$, then there is no reason that $u \cdot \nabla u = 0$ in $D$.
Nevertheless, we will stubbornly follow the strategy outlined above, just along the boundary $\partial D$, keeping the extra terms that involve $u^k \nabla u^k$. Firstly, we find:
\begin{lemma}\label{la:testfunctions}
For $\solu\in W^{1,p}(D,\R^N)$ satisfying \eqref{eq:soluspheredef} we have
\[
 \int_{D} |\nabla \solu|^{p-2} \nabla \solu^i\cdot \nabla \varphi = \int_{D} |\nabla \solu|^{p-2} \nabla \solu^k \cdot \nabla (\solu^k \solu^i \varphi),
\]
for any $\varphi \in W^{1,p}(D)$.
\end{lemma}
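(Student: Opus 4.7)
The plan is to recognize the identity as the weak Euler--Lagrange equation \eqref{eq:def} tested against a particular admissible vector field. For fixed index $i \in \{1,\ldots,N\}$ and scalar function $\varphi$, I would introduce the vector-valued test function $\phi = (\phi^1,\ldots,\phi^N)$ defined by
\[
 \phi^k := \delta^{ki}\,\varphi - u^k u^i \varphi, \qquad k=1,\ldots,N.
\]
Geometrically, $\phi$ is the orthogonal projection of $\varphi\, e_i$ onto the hyperplane perpendicular to $u$; the correction $-u^k u^i \varphi$ measures the failure of $u$ to lie on $\S^{N-1}$ in the interior and will vanish on $\partial D$ where $|u|=1$, producing the desired tangentiality.

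I would then verify that $\phi$ is an admissible test function. Since Lemma~\ref{la:boundedness} guarantees $|u|\le 1$ a.e., the products $u^k u^i$ lie in $L^\infty(D) \cap W^{1,p}(D)$, so $\phi \in W^{1,p}(D,\R^N)$ by the product rule (assuming first that $\varphi \in L^\infty \cap W^{1,p}$, with the general $W^{1,p}$ case recovered by truncating $\varphi_M := \max(-M,\min(\varphi,M))$ and passing to the limit via $|\nabla u|^{p-1} \in L^{p/(p-1)}$ and H\"older's inequality). For the boundary condition I would compute
\[
 \phi \cdot u = u^i \varphi - |u|^2\, u^i \varphi = u^i \varphi\,(1-|u|^2),
\]
which vanishes on $\partial D$ in the trace sense because $u(\partial D) \subset \S^{N-1}$. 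Hence the trace of $\phi$ lies in $(\R u)^\perp = T_u \S^{N-1}$ almost everywhere on $\partial D$, so $\phi$ is indeed admissible in \eqref{eq:def}.

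Plugging $\phi$ into \eqref{eq:def} gives
\[
 0 = \int_D |\nabla u|^{p-2}\,\nabla u^k \cdot \nabla\phi^k = \int_D |\nabla u|^{p-2}\,\nabla u^i \cdot \nabla\varphi - \int_D |\nabla u|^{p-2}\,\nabla u^k \cdot \nabla(u^k u^i \varphi),
\]
which rearranges to the claimed identity. The only delicate point is the boundary verification that $\phi|_{\partial D} \in T_u \S^{N-1}$: it rests on the single algebraic cancellation $1-|u|^2 = 0$ on $\partial D$, which is precisely the reason for the chosen form of $\phi^k$. The $W^{1,p}$-regularity of $\phi$ and the final substitution are routine bookkeeping via the product rule.
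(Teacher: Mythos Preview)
Your proposal is correct and is essentially identical to the paper's proof: the paper takes $\Phi = \varphi\, e_i$ and uses the tangential projection $\Phi - u\langle u,\Phi\rangle$, which in coordinates is precisely your $\phi^k = \delta^{ki}\varphi - u^k u^i \varphi$, then invokes \eqref{eq:def}. You have simply supplied more detail on the admissibility of $\phi$ (boundedness of $u$, the boundary computation $\phi\cdot u = u^i\varphi(1-|u|^2)$, and the truncation for unbounded $\varphi$) than the paper's three-line argument.
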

Let us stress that the test function $\varphi$ above does not need to vanish at the boundary.
\begin{proof}
Let $\Phi= (0,\ldots,\varphi,\ldots,0)$ (only the i-th coordinate is nonzero and equal to $\varphi$). Observe that 
\[\Phi - \solu \langle \solu,  \Phi\rangle_{\R^N} \in T_\solu \S^{N-1} \quad \mbox{a.e. on $\partial D$.}
                                                 \]
The claim follows now from the definition of $p$-harmonic maps with free boundary \eqref{eq:def}.
\end{proof}
Also we have the following conservation law.
\begin{lemma}\label{la:conservation}
Let $\solu\in W^{1,p}(D,\R^N)$ satisfying \eqref{eq:soluspheredef}. Then, for
\[
 \Omega_{ij} := \brac{\solu^i\nabla \solu^j -\solu^j\nabla \solu^i},
\]
we have 
\[
 \div(|\nabla \solu|^{p-2} \Omega_{ij}) = 0 \quad \mbox{in $D$}
\]
up to the boundary. That is, for any $\varphi \in C^\infty(\overline{D})$ and any $i,j = 1,\ldots,N$,
\begin{equation}\label{eq:conservation}
 \int_{D} |\nabla \solu|^{p-2} \Omega_{ij}\cdot \nabla \varphi = 0.
\end{equation}
Besides, equation \eqref{eq:conservation} is also satisfied for every $\varphi$ in $W^{1,p}\cap L^\infty(D)$.
\end{lemma}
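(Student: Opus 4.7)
The plan is to derive the conservation law purely algebraically from Lemma \ref{la:testfunctions}, exploiting the symmetry $\nabla u^i\cdot\nabla u^j = \nabla u^j\cdot\nabla u^i$ to cancel all terms that are not already in divergence form. Recall that by Lemma~\ref{la:boundedness} solutions satisfy $|u|\le 1$, so for any $\varphi\in W^{1,p}\cap L^\infty(D)$ the products $u^i\varphi$ lie in $W^{1,p}(D)$ and are admissible in Lemma~\ref{la:testfunctions}.

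First I would use the Leibniz rule to rewrite, for fixed $i,j$,
\[
 |\nabla u|^{p-2} u^i\nabla u^j\cdot\nabla\varphi = |\nabla u|^{p-2}\nabla u^j\cdot\nabla(u^i\varphi) - |\nabla u|^{p-2}\varphi\,\nabla u^j\cdot\nabla u^i,
\]
and analogously with $i,j$ swapped. Subtracting and observing that the symmetric ``diagonal'' terms $|\nabla u|^{p-2}\varphi\,\nabla u^i\cdot\nabla u^j$ cancel gives
\[
 \int_D |\nabla u|^{p-2}\Omega_{ij}\cdot\nabla\varphi = \int_D |\nabla u|^{p-2}\nabla u^j\cdot\nabla(u^i\varphi) - \int_D |\nabla u|^{p-2}\nabla u^i\cdot\nabla(u^j\varphi).
\]

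Now the key step: apply Lemma \ref{la:testfunctions} to each term on the right, with test functions $u^i\varphi$ and $u^j\varphi$ respectively. This yields
\[
 \int_D |\nabla u|^{p-2}\nabla u^j\cdot\nabla(u^i\varphi) = \int_D |\nabla u|^{p-2}\nabla u^k\cdot\nabla(u^k u^j\, u^i\varphi),
\]
\[
 \int_D |\nabla u|^{p-2}\nabla u^i\cdot\nabla(u^j\varphi) = \int_D |\nabla u|^{p-2}\nabla u^k\cdot\nabla(u^k u^i\, u^j\varphi).
\]
These right-hand sides are identical, so their difference vanishes and \eqref{eq:conservation} follows immediately for $\varphi\in C^\infty(\overline{D})$.

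The only point requiring minor care is the last sentence of the lemma, i.e.\ admitting $\varphi\in W^{1,p}\cap L^\infty(D)$ rather than just $\varphi\in C^\infty(\overline{D})$. The computation above is valid verbatim for such $\varphi$ provided the test function $u^i\varphi\in W^{1,p}(D)$ is allowed in Lemma~\ref{la:testfunctions}; since $u\in L^\infty(D)\cap W^{1,p}(D)$ by Lemma~\ref{la:boundedness}, the product rule gives $u^i\varphi\in W^{1,p}(D)$ and $u^k u^i u^j\varphi\in W^{1,p}(D)$ with the expected estimates. I do not anticipate a genuine obstacle: the argument is essentially the Shatah-type manipulation applied to the boundary equation, and the sphere structure ($|u|=1$ on $\partial D$) is already encoded in Lemma~\ref{la:testfunctions} through the choice of admissible test maps tangent to $\S^{N-1}$.
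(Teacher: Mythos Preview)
Your proposal is correct and follows essentially the same route as the paper: rewrite $\Omega_{ij}\cdot\nabla\varphi$ via the Leibniz rule so that the symmetric terms $\varphi\,\nabla u^i\cdot\nabla u^j$ cancel, then apply Lemma~\ref{la:testfunctions} with the test functions $u^i\varphi$ and $u^j\varphi$ to obtain two identical expressions whose difference vanishes. Your additional remarks on why $u^i\varphi\in W^{1,p}(D)$ when $\varphi\in W^{1,p}\cap L^\infty(D)$ make explicit a point the paper leaves implicit.
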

\begin{proof}
By the product rule, 
\[
\begin{split}
  \int_{D}& \nabla \varphi \cdot |\nabla \solu|^{p-2}\brac{\solu^i\nabla \solu^j -\solu^j\nabla \solu^i}\\
  &= \int_{D} \brac{\nabla (\varphi \solu^i) \cdot |\nabla \solu|^{p-2}\nabla \solu^j - \nabla (\varphi \solu^j) \cdot |\nabla \solu|^{p-2}\nabla \solu^i}.
\end{split}
 \]
Therefore, by Lemma~\ref{la:testfunctions}, we find
\[
\begin{split}
 \int_{D}& |\nabla \solu|^{p-2} \Omega_{ij}\cdot \nabla \varphi\\
 &=\int_{D} |\nabla \solu|^{p-2} \nabla \solu^k \cdot \nabla (\solu^k \solu^i \solu^j\varphi) -\int_{D} |\nabla \solu|^{p-2} \nabla \solu^k \cdot \nabla (\solu^k \solu^j \solu^i \varphi) \\
 &= 0.
 \end{split}
\]
\end{proof}
We combine Lemma~\ref{la:conservation} and Lemma~\ref{la:testfunctions}. In contrast to the argument for the $p$-harmonic map $w$, we find additional terms. Namely, instead of having $w^k \nabla w^k \equiv 0$ we merely have $u^k \nabla u^k = \frac{1}{2} \nabla (|u|^2-1)$. However, it is an improvement, because $|u|^2-1 \in W^{1,p}_0(D)$.
\begin{lemma}\label{la:goodpde}
Let $\solu\in W^{1,p}(D,\R^N)$ satisfying \eqref{eq:soluspheredef}. Then for any $\varphi \in W^{1,p}(D)$ we have
\[
\begin{split}
 \int_{D}& |\nabla \solu|^{p-2} \nabla \solu^i\cdot \nabla \varphi\\
 &= \int_{D} |\nabla \solu|^{p-2} \nabla \solu^k \cdot \Omega_{ik}\, \varphi+ \int_{D} |\nabla \solu|^{p-2} \nabla \solu^i\cdot \nabla \brac{|\solu|^2-1} \ \varphi\\
 &\quad +\frac{1}{2}\int_{D} |\nabla \solu|^{p-2} \nabla \varphi \cdot \nabla \brac{|\solu|^2-1}\ \solu^i.
\end{split}
 \]
 \end{lemma}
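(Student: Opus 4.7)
The plan is to reduce the statement to a purely algebraic manipulation of the identity provided by Lemma~\ref{la:testfunctions}. No further analytic input is needed: we use neither integration by parts, nor the conservation law from Lemma~\ref{la:conservation}, nor any new consequence of \eqref{eq:soluspheredef}.

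First I would apply Lemma~\ref{la:testfunctions} directly to the admissible test function $\varphi \in W^{1,p}(D)$ to rewrite the left-hand side as $\int_D |\nabla \solu|^{p-2}\nabla \solu^k\cdot \nabla(\solu^k \solu^i \varphi)$. Expanding this gradient by the Leibniz rule produces three natural integrals, corresponding to differentiating $\solu^k$, $\solu^i$, and $\varphi$ respectively. Call them $A_1$, $A_2$, $A_3$.

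The next step is to contract the sum over $k$ using the two elementary identities $\sum_k \nabla \solu^k\cdot \nabla \solu^k = |\nabla \solu|^2$ and $\sum_k \solu^k\nabla \solu^k = \tfrac12\nabla(|\solu|^2-1)$. The first collapses $A_1$ to $\int_D |\nabla \solu|^p\, \solu^i \varphi$. The second converts $A_2$ into $\tfrac12\int_D |\nabla \solu|^{p-2}\nabla \solu^i\cdot \nabla(|\solu|^2-1)\varphi$ and $A_3$ into $\tfrac12\int_D |\nabla \solu|^{p-2}\solu^i\,\nabla\varphi\cdot \nabla(|\solu|^2-1)$. Note that $A_3$ already matches the third term on the right-hand side of the statement, and $A_2$ produces only half of the desired second term.

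Finally, to bring in $\Omega_{ik}$ and simultaneously upgrade the coefficient of the middle term from $\tfrac12$ to $1$, I would rewrite $A_1$ using the tautology
\[
\solu^i \nabla \solu^k = \Omega_{ik} + \solu^k \nabla \solu^i,
\]
which is nothing but the definition of $\Omega_{ik}$. Inserting this into $A_1 = \int_D |\nabla \solu|^{p-2}\nabla \solu^k\cdot(\solu^i\nabla \solu^k)\varphi$ produces exactly the $\Omega_{ik}$-integral from the statement, plus an extra copy of $A_2$. Adding everything up, $A_1+A_2+A_3$ becomes the desired formula. I do not foresee any genuine obstacle: the whole argument is a summation-convention bookkeeping exercise. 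The only things to verify along the way are that each integral makes sense, which follows from $\solu \in L^\infty(D)$ (Lemma~\ref{la:boundedness}), $\nabla \solu \in L^p(D)$ and $\varphi \in W^{1,p}(D)$, and that the test function is not required to vanish on $\partial D$ — which is precisely the content of Lemma~\ref{la:testfunctions}.
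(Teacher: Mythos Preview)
Your proposal is correct and follows essentially the same approach as the paper: apply Lemma~\ref{la:testfunctions}, expand $\nabla(\solu^k \solu^i \varphi)$ by the Leibniz rule, use the definition $\Omega_{ik}=\solu^i\nabla\solu^k-\solu^k\nabla\solu^i$ to convert the $\solu^i\nabla\solu^k$-term into $\Omega_{ik}$ plus an extra copy of $A_2$, and then contract via $\solu^k\nabla\solu^k=\tfrac12\nabla(|\solu|^2-1)$. The paper merely groups the Leibniz expansion slightly differently (splitting into two terms first, then three), and likewise makes no use of Lemma~\ref{la:conservation} or integration by parts in this step.
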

It is important to observe that in particular we do not obtain an equation of the form $|\div(|\nabla u|^{p-2} \nabla u)| \aleq |\nabla u|^p$ as it is the case for $p$-harmonic maps (i.e., the interior situation). This is why for $p < n$ we are forced to combine our growth estimate with the geometric reflection argument, see Proposition~\ref{pr:plapnup}.
\begin{proof}[Proof of Lemma~\ref{la:goodpde}]
By Lemma~\ref{la:testfunctions} we have for any $\varphi \in C^\infty(\overline{D})$,
\[
\begin{split}
 \int_{D}& |\nabla \solu|^{p-2} \nabla \solu^i\cdot \nabla \varphi \\
 &= \int_{D} |\nabla \solu|^{p-2} \nabla \solu^k \cdot \nabla \solu^k\ \solu^i \varphi +\int_{D} |\nabla \solu|^{p-2} \nabla \solu^k \cdot \solu^k \nabla(\solu^i \varphi).
 \end{split}
 \]
Using the definition of $\Omega_{ik}$ from Lemma~\ref{la:conservation} we write
\[
\begin{split}
 \int_{D}& |\nabla \solu|^{p-2} \nabla \solu^i\cdot \nabla \varphi \\
 &= \int_{D} |\nabla \solu|^{p-2} \nabla \solu^k \cdot \Omega_{ik}\, \varphi+ 2\int_{D} |\nabla \solu|^{p-2} \nabla \solu^i\cdot \nabla \solu^k  \solu^k \ \varphi\\
 &\quad +\int_{D} |\nabla \solu|^{p-2} \nabla \solu^k \solu^i \solu^k \cdot \nabla \varphi.
\end{split}
 \]
Since $\solu^k \nabla \solu^k  = \frac{1}{2}\nabla \brac{|\solu|^2-1}$, we have shown that
\[
\begin{split}
 \int_{D}& |\nabla \solu|^{p-2} \nabla \solu^i\cdot \nabla \varphi\\
 &= \int_{D} |\nabla \solu|^{p-2} \nabla \solu^k \cdot \Omega_{ik}\, \varphi+ \int_{D} |\nabla \solu|^{p-2} \nabla \solu^i\cdot \nabla \brac{|\solu|^2-1} \ \varphi\\
 &\quad +\frac{1}{2}\int_{D} |\nabla \solu|^{p-2} \nabla \varphi \cdot \nabla \brac{|\solu|^2-1}\ \solu^i  \cdot \\
\end{split}
 \]
\end{proof}
For the second and third term on the right-hand side of the equation in Lemma~\ref{la:goodpde} we observe that $|u|^2-1$ has zero boundary values on $\partial D$. In addition, and this is another crucial ingredient here, we can choose $u$ or (its coordinates) as a test function in Lemma \ref{la:testfunctions}, \ref{la:conservation}, and \ref{la:goodpde} since $u$ is in $W^{1,p}\cap L^\infty(D,\R^N)$ from Lemma \ref{la:boundedness}.


 Moreover, in view of the interior equation for $u$, \eqref{eq:soludef},
\[
 \int_{D} |\nabla u|^{p-2} \nabla u^i\cdot \nabla (|u|^2-1) = 0.
\]

\begin{proof}[Proof of Proposition~\ref{pr:growth}]
For notational simplicity {we prove the growth estimates when the boundary is flat. More precisely we treat the case where  $B^+(0,R)\subset D \subset \R^n_+$ for some $R>0$, and  $\partial D\cap B(0,R)=\partial \R^n_+ \cap B(0,R)$.} 
The following argument can be easily adapted to general $D$ --- here is where one has to choose $R_0 = R_0(D)$ for flattening the boundary. We leave the details to the reader. {We also recall that, since we work in a smooth bounded domain, from Lemma~\ref{la:boundedness} we {have} that $\|u\|_{L^\infty(D)}\leq 1$.}


Let $\eta \in C_c^\infty(B(0,2))$ be the typical bump function constantly one in $B(0,1)$. {Let $y_0\in \R^n, r>0$ be such that $B(y_0,4r)\subset B(0,R)$}. Denote by
\[
 \eta_{B(y_0,r)}(x) := \eta((x-y_0)/r).
\]
Set \[\tilde{\solu} := \eta_{B(y_0,r)} (\solu-(\solu)_{B^{{+}}(y_0,2r)})\] and
\[
 \hat{\solu} := (1-\eta_{B(y_0,r)})\eta_{B(y_0,r)} (\solu-(\solu)_{B^{{+}}(y_0,2r)}).
 \]
Since $\eta_{B(y_0,r)} \equiv 1$ on $B(y_0,r)$ we have 
\[
 \int_{ B^+(y_0,r)}|\nabla \solu|^p \leq \int_{{\R^n_+}} |\nabla \solu|^{p-2} \nabla \tilde{\solu} \cdot \nabla \tilde{\solu}.
\]
We compute
\begin{equation}\label{eq:gradientutildauwidehat}
\begin{split}
 \nabla \tilde{\solu} \cdot \nabla \tilde{\solu}
 &= \nabla \solu\cdot \nabla \tilde{\solu}-  \nabla \solu\cdot \nabla \hat{\solu} \\
 &\quad- \nabla \eta_{B(y_0,r)}\cdot \nabla \solu\ \tilde{\solu}  + \nabla\eta_{B(y_0,r)}\, (\solu-(\solu)_{B^{{+}}(y_0,2r)})\cdot \nabla \tilde{\solu}.
\end{split}
 \end{equation}
Since $|\nabla \eta_{B(y_0,r)}| \aleq r^{-1}$, 
\begin{equation}\label{eq:thirdterm}
 \int_{{\R^n_+}}|\nabla \solu|^{p-2} (\nabla \eta_{B(y_0,r)}\tilde{\solu})\cdot \nabla \solu\  \aleq r^{-1}\int_{ B^+(y_0,2r)\backslash B^+(y_0,r)} |\nabla \solu|^{p-1} |\tilde{\solu}|. 
\end{equation}
This can be further estimated in two ways. For the estimate \eqref{eq:growth:peqn}, by Young and Poincar\'e inequalities, we have for any $\mu >0$
\[
 \int_{{\R^n_+}}|\nabla \solu|^{p-2} (\nabla \eta_{B(y_0,r)}\tilde{\solu})\cdot \nabla \solu\  \aleq \frac{1}{\mu} \int_{ B^+(y_0,2r)\backslash B^+(y_0,r)} |\nabla \solu|^{p}  + \mu^{p-1} \int_{ B^+(y_0,2r)}|\nabla u|^{p}.
\]
For the estimates \eqref{eq:growth:pllnbd0} and \eqref{eq:growth:pllnbd}, by Young's inequality we have for any $\lambda > 0$ 
\[
 \int_{{\R^n_+}}|\nabla \solu|^{p-2} \nabla \eta_{B(y_0,r)}\cdot \nabla \solu\ \tilde{\solu} \aleq \lambda \int_{B^+(y_0,2r)} |\nabla \solu|^{p}+\lambda^{1-p} r^{-p}\int_{B^+(y_0,2r)} |\solu-(\solu)_{B^{{+}}(y_0,2r)}|^p.
\]
For the last term of \eqref{eq:gradientutildauwidehat}
\[
\begin{split}
 \int_{{\R^n_+}}|\nabla u|^{p-2}\nabla\eta_{B(y_0,r)}\, (\solu-(\solu)_{B^{{+}}(y_0,2r)})\cdot \nabla \tilde{\solu} &\aleq r^{-2}\int_{B^+(y_0,2r)\setminus B^+(y_0,r)} |\nabla u|^{p-2}|u-(u)_{B^{+}(y_0,2r)}|^2\\
 & + r^{-1}\int_{B^+(y_0,2r)\setminus B^+(y_0,r)} |\nabla u|^{p-1}|u-(u)_{B^{{+}}(y_0,2r)}|.
\end{split}
 \]
By a similar estimate, we easily get for any $\mu>0$
\[
 \int_{\R^n_+}|\nabla u|^{p-2}\nabla\eta_{B(y_0,2r)}\, (\solu-(\solu)_{B^{{+}}(y_0,2r)})\cdot \nabla \tilde{\solu} \aleq
 \frac{1}{\mu} \int_{ B^+(y_0,2r)\backslash B^+(y_0,r)} |\nabla \solu|^{p}  + \mu^{p-1} \int_{ B^+(y_0,2r)}|\nabla u|^{p}
 \]
and for any $\lambda>0$
\[
\begin{split}
\int_{\R^n_+}|\nabla u|^{p-2}\nabla\eta_{B(y_0,2r)}\, (\solu-(\solu)_{B^{{+}}(y_0,2r)})\cdot \nabla \tilde{\solu} &\aleq
 \lambda \int_{B^+(y_0,2r)} |\nabla \solu|^{p}\\
 &\quad +\lambda^{1-p} r^{-p}\int_{B^+(y_0,2r)} |\solu-(\solu)_{B^{{+}}(y_0,2r)}|^p.
 \end{split}
\]

Consequently, we found 
\begin{equation}\label{eq:intest}
\begin{split}
\int_{ B^+(y_0,r)}|\nabla \solu|^p &\aleq \left|\int_{\R^n_+} |\nabla \solu|^{p-2} \nabla \solu \cdot \nabla \tilde{\solu}\right| + \left|\int_{\R^n_+} |\nabla \solu|^{p-2} \nabla \solu \cdot \nabla \hat{\solu}\right|\\
& \quad +     \frac{1}{\mu} \int_{ B^+(y_0,2r)\backslash B^+(y_0,r)} |\nabla \solu|^{p}  + \mu^{p-1} \int_{ B^+(y_0,2r)}|\nabla u|^{p}
\end{split}
\end{equation}
and
\begin{equation}\label{eq:intest2}
\begin{split}
\int_{ B^+(y_0,r)}|\nabla \solu|^p &\aleq \left|\int_{\R^n_+} |\nabla \solu|^{p-2} \nabla \solu \cdot \nabla \tilde{\solu}\right| + \left|\int_{\R^n_+} |\nabla \solu|^{p-2} \nabla \solu \cdot \nabla \hat{\solu}\right|\\
& \quad + \lambda \int_{B^+(y_0,2r)} |\nabla \solu|^{p}+\lambda^{1-p} r^{-p}\int_{B^+(y_0,2r)} |\solu-(\solu)_{B^{{+}}(y_0,2r)}|^p.
\end{split}
\end{equation}

{If we are in the interior case, i.e.,} $B(y_0,2r) \subset {B^+(0,R)}$, then $\supp \tilde{u} \cup \supp \hat{u} \subset {B^+(0,R)}$ and thus $\div(|\nabla u|^{p-2} \nabla u) = 0$ in ${B^+(0,R)}$ implies
\[
 \left|\int_{\R^n_+} |\nabla \solu|^{p-2} \nabla \solu \cdot \nabla \tilde{\solu}\right| + \left|\int_{\R^n_+} |\nabla \solu|^{p-2} \nabla \solu \cdot \nabla \hat{\solu}\right| = 0.
\]
Thus, for $B(y_0,2r) \subset {B^+(0,R)}$ the claim is proven.

From now on we assume that {the ball $B(y_0,r)$ is close to the boundary, i.e, } $B(y_0,2r) {\cap \, \{\R^{n-1}\times\{0\}\}\neq \emptyset}$. By Lemma~\ref{la:goodpde},
\[
 \int_{\R^n_+} |\nabla \solu|^{p-2} \nabla \solu^i\cdot \nabla \tilde{\solu}^i = I + II + \frac{1}{2} III,
\]
where
\[
\begin{split} 
 I:=& \int_{\R^n_+} |\nabla \solu|^{p-2} \nabla \solu^k \cdot \Omega_{ik}\, \tilde{\solu}^i,\\
 II :=&\int_{\R^n_+} |\nabla \solu|^{p-2} \nabla \solu^i\cdot \nabla \brac{|\solu|^2-1} \ \tilde{\solu}^i,\\
 III:=&\int_{\R^n_+} |\nabla \solu|^{p-2} \nabla \tilde{\solu}^i \cdot \nabla \brac{|\solu|^2-1}\ \solu^i.\\
\end{split}
\]
Since $\solu$ is $p$-harmonic and by Lemma~\ref{la:conservation} all three terms above contain products of divergence-free and rotation-free quantities. However, the div-curl estimate by Coifman, Lions, Meyer, Semmes \cite{CLMS} is only applicable when at least one term vanishes at the boundary, otherwise there are counterexamples, see \cite{DaLio-Palmurella-2017,Hirsch-2019}.

We investigate the \underline{first term $I$}. Let $\tilde{B} \subset {B^+(0,R)}$ be a smooth, bounded, open, and convex set, such that ${B^+(y_0,2r)}\subset\tilde{B} \subset B(y_0,3r)$ and $\partial \tilde{B} \cap \partial \R^n_+ = B(y_0,2r) \cap \partial \R^n_+$. By Hodge decomposition\footnote{
{More precisely, one argues, e.g., as in \cite[(3.6), (3.7)]{Schikorra10}: One solves
\[
\begin{cases}
\lap \zeta_{ik}   = \curl(|\nabla \solu|^{p-2} \Omega_{ik}) &\mbox{in $\tilde{B}$}\\
\zeta_{ik} = 0 \quad& \mbox{on $\partial\tilde{B}$}.
\end{cases}
\]
such that \eqref{eq:hodge1st:2} is satisfied. 
Then one sets
\[
H := |\nabla \solu|^{p-2} \Omega_{ik} - {\rm Curl} \zeta_{ik}.
\]
By  Poincar\'{e} lemma we can write $H = \nabla \xi$.}

} (see \cite[(\textcolor{red}{10.4})]{Iwaniecmartin})  we find $\xi_{ik} \in W^{1,p'}(\tilde{B})$, with $p'=\frac{p}{p-1}$, and $\zeta_{ik} \in W^{1,p'}_0(\tilde{B},\Ep^{2}\R^n)$ such that 
\begin{equation}\label{eq:hodge1st}
 |\nabla \solu|^{p-2} \Omega_{ik} = \nabla \xi_{ik} +  \Curl\zeta_{ik} \quad \mbox{in $\tilde{B}$}.
\end{equation}
Moreover, we have
\begin{equation}\label{eq:hodge1st:2}
 \|\zeta_{ik}\|_{W^{1,p'}(\tilde{B})} \aleq \||\nabla \solu|^{p-2} \Omega_{ik}\|_{L^{p'}(B(y_0,3r))}.
\end{equation}
The boundary data of $\zeta$ and Lemma~\ref{la:conservation} imply that 
\[
 \int_{\tilde{B}} \nabla \xi_{ik} \cdot \nabla \varphi  = {\int_{\tilde{B}}  |\nabla \solu|^{p-2} \Omega_{ik} \cdot \nabla \varphi- \int_{\tilde{B}}  \Curl \zeta_{ik}\cdot \nabla \varphi } = 0 \quad \mbox{for any $\varphi \in C^\infty(\overline{\tilde{B}})$}.
\]
That is, $\xi_{ik}$ is harmonic with trivial Neumann data, and thus $\xi_{ik}$ is constant. In particular, \eqref{eq:hodge1st} simplifies to
\begin{equation}\label{eq:hodge2nd}
 |\nabla \solu|^{p-2} \Omega_{ik} = \Curl \zeta_{ik} \quad \mbox{in $\tilde{B}$}.
\end{equation}
Consequently,
\[
 I =  \int_{\R^n_+} \Curl \zeta_{ik} \cdot \nabla \solu^k\, \tilde{\solu}^i = \int_{\R^n} \Curl \zeta_{ik} \cdot \nabla \solu^k\, \tilde{\solu}^i. 
\]
The last equality is true, since $\zeta_{ik}$ {vanishes on} ${\partial \R^n_+\cap B(0,R)}$ and we can extend it by zero to ${\R^n_-\cap B(0,R)}$. Now we use the div-curl structure and apply the result by Coifman, Lions, Meyer, Semmes  \cite{CLMS}. Recall that $BMO$ is the space of functions $f$ with finite {seminorm} $[f]_{BMO} < \infty$. Here,
\[
 [f]_{BMO} := \sup_{B} |B|^{-1} \int_{B} |f-(f)_{B}|,
\]
where the supremum is taken over all balls $B$. Observe that by Poincar\'e inequality,
\begin{equation}\label{eq:BMOpoincare}
 [f]_{BMO} \aleq \sup_{x_0 \in \R^n, \ \rho>0} \brac{\rho^{p-n} \int_{B(x_0,\rho)} |\nabla f|^p}^{\frac{1}{p}}.
\end{equation}
Coifman, Lions, Meyer, Semmes  showed in \cite{CLMS} that the following inequality holds
\[
 \int_{\R^n} F \cdot G\ \varphi \aleq \|F\|_{L^p(\R^n)}\, \|G\|_{L^{p'}(\R^n)}\, [\varphi]_{BMO}
\]
whenever $F$ and $G$ are vector fields such that $\div F = 0$ and $\curl G = 0$. See also \cite{Lenzmann-Schikorra-2016} for a different proof. In our situation this inequality implies\footnote{{Here, $\tilde{u}$ is extended into the whole space $\R^n$ in such a way that $[\tilde{u}]_{BMO}\aleq \lambda$. This can be done by an appropriate reflection of $u$ outside of $B^+(y_0,3r)$.}}
{\begin{equation}\label{eq:utildainbmo}
\begin{split}
  |I| &\aleq \||\nabla \solu|^{p-2} \Omega_{ik}\|_{L^{p'}(B^+(y_0,4r))}\ \|\nabla \solu\|_{L^p(B^+(y_0,4r))}\ [\tilde{\solu}]_{BMO}\\
 &\aleq \|\nabla \solu\|_{L^p(B^+(y_0,4r))}^p\ [\tilde{\solu}]_{BMO}.
\end{split}
 \end{equation}}
The last estimate follows readily from the definition of $\Omega$ in Lemma~\ref{la:conservation}.
Thus, for the $\lambda$ from \eqref{small} we obtain
\[
 |I| \aleq \lambda\ \int_{B^+(y_0,4r)} |\nabla \solu|^p.
\]
\underline{As for $II$}, since $\div(|\nabla \solu|^{p-2} \nabla \solu)=0$ in ${B^+(0,R)}$, there exists $\zeta_i\in W^{1,p}(B^+(y_0,2r),\Ep^2 \R^n)$ such that 
\[
 |\nabla \solu|^{p-2} \nabla \solu^i = \Curl \zeta_i \quad \mbox{in $B^+(y_0,2r)$}.
\]
We can extend $\zeta$ to all of $\R^n$ so that 
\[
 \|\zeta\|_{W^{1,p'}(\R^n)} \aleq \|\nabla \solu\|_{L^p(B^{+}(y_0,2r))}^{p-1}.
\]
{Also, since $u$ is assumed to be bounded we have $|u|^2 \in {W}^{1,p}(B^+(0,R))$, and in the sense of traces $|u|^2 \equiv 1$ on $B(0,R)\cap\{\R^{n-1} \times \{0\}\}$. This is equivalent to saying that the extension of $|u|^2-1$ by zero to $B(0,R)\cap \,\R^n_-$ belongs to ${W}^{1,p}({B(0,R)})$, that is we have, $(|u|^2-1)\chi_{\R^n_+} \in W^{1,p}({B(0,R)})$ and the distributional gradient satisfies}
{
\[
 \nabla \left ((|u|^2-1)\chi_{\R^n_+} \right) = \chi_{\R^n_+}\nabla |u|^2 \quad \text{a.e. in } B(0,R).
\]
}
In particular, since $(|u|^2-1)\chi_{\R^n_+}$ is zero on $B(y_0,2r)\cap\, \R^n_{-}$ we can use Poincar\'e inequality to get
\begin{equation}\label{eq:poinczero}
 \||\solu|^2 -1\|_{L^p( B^+(y_0,2r))} \aleq r\, \|u\|_{L^{\infty}(B^+(y_0,4r))}\, \|\nabla \solu\|_{L^p(B^+(y_0,4r))}.
\end{equation}
In particular, by using that $|\nabla \eta_{B(y_0,2r)}|\aleq r^{-1}$, \eqref{eq:BMOpoincare}, the triangle inequality in $L^p$ and \eqref{eq:poinczero}, for the $\lambda$ from \eqref{small},
\[
 [\brac{|\solu|^2-1}\chi_{\R^n_+} \eta_{B(y_0,{2}r)}]_{BMO} \aleq \lambda.
\]
We also observe that $\nabla \tilde{u} \equiv \eta_{B(y_0,2r)} \nabla \tilde{u}$. 
Thus, integrating by parts we obtain
\[
 II =-\int_{\R^n} \Curl \zeta\cdot \nabla \tilde{\solu}^i\ \brac{|\solu|^2-1}\chi_{\R^n_+}\eta_{B(y_0,{2}r)}.
\]
Hence, with the div-curl theorem from \cite{CLMS}, see also the localized version \cite[Corollary 3]{Strzelecki-1994}, we find
\[
 |II| \aleq \lambda \|\nabla \solu\|_{L^p(B^{+}(y_0,4r))}^{p}. 
\]
It remains \underline{to treat $III$}. Observe that 
\[
\begin{split}
 \nabla \tilde{\solu}^i &\cdot \nabla \brac{|\solu|^2-1}\ \solu^i \\
 &=\nabla \solu^i \cdot \nabla \brac{|\solu|^2-1}\ \eta_{B(y_0,r)} \solu^i + \nabla \eta_{B(y_0,r)}\, (\solu^i-(\solu^i)_{B^{{+}}(y_0,2r)}) \cdot \nabla \brac{|\solu|^2-1}\ \solu^i\\
  &=\nabla \solu^i \cdot \nabla \brac{|\solu|^2-1}\ \tilde{\solu}^i \\
  &\quad+\nabla \solu^i \cdot \nabla \brac{|\solu|^2-1}\ \eta_{B(y_0,r)} (\solu^i)_{B^{{+}}(y_0,2r)} \\
  &\quad+ \nabla \eta_{B(y_0,r)}\, (\solu^i-(\solu^i)_{B^{{+}}(y_0,2r)}) \cdot \nabla \brac{|\solu|^2-1}\ \solu^i.
\end{split}
\]
By integration by parts, using that $\div(|\nabla u|^{p-2} \nabla u) = 0$ {in $B^+(0,R)$}, $|u|^2-1$ is zero on {$\partial \R^n_+\cap B(0,R)$} and then arguing as in the argument for $II$,
\[
\begin{split}
 \int_{\R^n_+} |\nabla \solu|^{p-2} \nabla \solu^i \cdot \nabla \brac{|\solu|^2-1}\ \tilde{\solu}^i
 &= -\int_{\R^n_+} |\nabla \solu|^{p-2} \nabla \solu^i \cdot \nabla \tilde{\solu}^i\ \brac{|\solu|^2-1}\\
 &\aleq  \lambda\, \|\nabla \solu\|_{L^p(B^{+}(y_0,4r))}^{p}.
\end{split}
 \]
Moreover, again since $\div(|\nabla \solu|^{p-2} \nabla \solu) = 0$ in ${B^+(0,R)}$ and $|\solu|^2 -1$ is zero on $\partial \R^n_+\cap B(0,R)$,
\[
\begin{split}
 \Bigg| \int_{\R^n_+}& |\nabla \solu|^{p-2}\nabla \solu^i \cdot \nabla \brac{|\solu|^2-1}\ \eta_{B(y_0,r)} (\solu^i)_{B^{{+}}(y_0,2r)} \Bigg|\\
  &=\left|\int_{\R^n_+} |\nabla \solu|^{p-2}\nabla \solu^i \cdot \brac{|\solu|^2-1}\ \nabla \eta_{B(y_0,r)} (\solu^i)_{B^{{+}}(y_0,2r)}\right|\\
  &\aleq r^{-1}\, \|u\|_{L^\infty({B^+(0,R)})}\, \|\nabla \solu\|^{p-1}_{L^p( B^+(y_0,2r)\backslash B^+(y_0,r))}\ \||\solu|^2 -1\|_{L^p( B^+(y_0,2r))}.
\end{split}
  \]
This leads to two estimates. Firstly, if we want to find \eqref{eq:growth:pllnbd}, by Young's inequality,
\[
\begin{split}
  \int_{\R^n_+}& |\nabla \solu|^{p-2}\nabla \solu^i \cdot \nabla \brac{|\solu|^2-1}\ \eta_{B(y_0,r)} (\solu^i)_{B^{{+}}(y_0,r)}\\
  &\aleq \lambda \|\nabla \solu\|^{p}_{L^p( B^+(y_0,2r))} + \lambda^{1-p}\, r^{-p} \||\solu|^2 -1\|^p_{L^p( B^+(y_0,2r))}.
\end{split}
 \]
Secondly, for \eqref{eq:growth:peqn} by \eqref{eq:poinczero} and 
by Young's inequality we have for any $\mu > 0$
\[
\begin{split}
  \int_{\R^n_+}& |\nabla \solu|^{p-2}\nabla \solu^i \cdot \nabla \brac{|\solu|^2-1}\ \eta_{B(y_0,r)} (\solu^i)_{B^{{+}}(y_0,2r)}\\
  &\aleq \mu^{-1} \|\nabla \solu\|^{p}_{L^p( B^+(y_0,2r) \backslash B^+(y_0,r))} + \mu^{p-1}\|\nabla \solu\|^p_{L^p( B^+(y_0,2r))}.
\end{split}
 \]
The last remaining term can be treated in a similar way and we have
\[
\begin{split}
   \int_{\R^n_+}& |\nabla \solu|^{p-2}\nabla \eta_{B(y_0,r)}\, (\solu^i-(\solu^i)_{B^{{+}}(y_0,2r)}) \cdot \nabla \brac{|\solu|^2-1}\ \solu^i \\
  &\aleq \mu^{-1} \|\nabla \solu\|^{p}_{L^p( B^+(y_0,2r)\backslash B^+(y_0,r))} + \mu^{p-1}\|\nabla \solu\|^p_{L^p( B^+(y_0,2r))}
\end{split}
 \]
and
\[
\begin{split}
   \int_{\R^n_+}& |\nabla \solu|^{p-2}\nabla \eta_{B(y_0,r)}\, (\solu^i-(\solu^i)_{B^{{+}}(y_0,2r)}) \cdot \nabla \brac{|\solu|^2-1}\ \solu^i \\
  &\aleq \lambda \|\nabla \solu\|^{p}_{L^p( B^+(y_0,2r))} + \lambda^{1-p}\, r^{-p} \|u- (u)_{B^{{+}}(y_0,2r)}\|^p_{L^p( B^+(y_0,2r))}.
\end{split}
 \]
Combining the estimates of $I$, $II$, and $III$ and plugging them into estimates \eqref{eq:intest} and \eqref{eq:intest2}, we conclude.
\end{proof}

\section{H\"older regularity for the case \texorpdfstring{$p=n$}{p=n}}\label{s:pnHoelder}
For the case $p=n$ H\"older continuity of the solution $u$ from Theorem~\ref{th:main} follows from Proposition~\ref{pr:growth} by a standard iteration argument. For higher regularity, and for $p < n$, we need to combine the growth estimate{s} from Proposition~\ref{pr:growth} with the reflection method.

\begin{proposition}[$\epsilon$-regularity for $p=n$: H\"older continuity]\label{pr:main:hoelderpn} 
Let $D \subset \R^n$ be a smooth, bounded domain, then there are positive constants $\epsilon = \epsilon(n,D)$, $\alpha = \alpha(n,D)$ such that the following holds for $p=n$:

Any solution $u\in W^{1,n}(D,\R^N)$ to \eqref{eq:soluspheredef} that satisfies for an $R > 0$ and for an $x_0 \in \overline{D}$
\[
\int_{B(x_0,R)\cap D} |\nabla \solu|^n < \epsilon
\]
is H\"older continuous in $B(x_0,R/2) \cap \overline{D}$. Moreover, we have the estimate
\[
 \sup_{x, y \in B(x_0,R/2)\cap \overline{D}}  \frac{|u(x)-u(y)|}{|x-y|^{\alpha}} \aleq R^{-\alpha} \|\nabla u\|_{L^n(B(x_0,R)\cap D)}.
\]
\end{proposition}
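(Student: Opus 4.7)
The plan is to apply the growth estimate \eqref{eq:growth:peqn} from Proposition~\ref{pr:growth} with $p=n$, combined with a hole-filling argument, dyadic iteration, and the Morrey--Campanato embedding up to the smooth boundary $\partial D$.

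\textbf{Critical scaling.} The first observation is that when $p=n$ the Morrey factor is $r^{p-n}=1$, so the smallness hypothesis $\int_{B(x_0,R)\cap D}|\nabla u|^n<\epsilon$ automatically yields condition \eqref{small} with $\lambda^n\leq \epsilon$ for every sub-ball $B(y_0,r)\subset B(x_0,R)$. By shrinking $R$ if necessary (and covering), we may also assume $R<R_0$ so that Proposition~\ref{pr:growth} applies.

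\textbf{Hole-filling.} For any pair of concentric balls with $B(y_0,4r)\subset B(x_0,R)$, estimate \eqref{eq:growth:peqn} reads
\[
\int_{B(y_0,r)\cap D}|\nabla u|^n \leq C(\lambda+\mu^{n-1})\int_{B(y_0,4r)\cap D}|\nabla u|^n + C\mu^{-1}\int_{(B(y_0,4r)\setminus B(y_0,r))\cap D}|\nabla u|^n.
\]
I would now hole-fill in the standard way: add $C\mu^{-1}\int_{B(y_0,r)\cap D}|\nabla u|^n$ to both sides and divide to obtain
\[
\int_{B(y_0,r)\cap D}|\nabla u|^n \leq \theta \int_{B(y_0,4r)\cap D}|\nabla u|^n, \qquad \theta := \frac{C\lambda+C\mu^{n-1}+C\mu^{-1}}{1+C\mu^{-1}}.
\]
Since $n\geq 2$, first choose $\mu>0$ small enough that $C\mu^{n-1}\leq \tfrac{1}{4}$, and then $\epsilon>0$ (hence $\lambda$) small enough that $C\lambda\leq \tfrac{1}{4}$; this forces $\theta<1$.

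\textbf{Iteration and Hölder continuity.} Iterating the one-step decay on dyadic scales yields the Morrey-type growth
\[
\int_{B(y_0,\rho)\cap D}|\nabla u|^n \leq C\brac{\frac{\rho}{R}}^{n\alpha} \int_{B(x_0,R)\cap D}|\nabla u|^n
\]
for every $y_0\in B(x_0,R/2)\cap\overline{D}$ and every $\rho\leq R/2$, with $\alpha=-\tfrac{1}{n}\log_4\theta>0$. Combined with the Poincaré inequality on the half-balls $B(y_0,\rho)\cap D$ (uniformly valid because $\partial D$ is smooth, via a local bi-Lipschitz flattening), this gives a Campanato estimate for $u$, hence $u\in C^{0,\alpha}(\overline{B(x_0,R/2)\cap D})$ with the scaling-consistent bound stated in the proposition.

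\textbf{Main obstacle.} The only delicate point is uniformity: one needs the growth estimate to hold with the same constant irrespective of whether the ball $B(y_0,r)$ sits in the interior or straddles $\partial D$. This is precisely what Proposition~\ref{pr:growth} supplies, so the remaining steps are classical.
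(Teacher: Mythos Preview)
Your proposal is correct and follows essentially the same route as the paper: apply the growth estimate \eqref{eq:growth:peqn} (noting that $r^{p-n}=1$ when $p=n$), hole-fill by adding $C\mu^{-1}\int_{B(y_0,r)\cap D}|\nabla u|^n$ to both sides, choose $\mu$ and then $\epsilon$ small to force the decay factor below $1$, iterate, and conclude via Poincar\'e and the Campanato--H\"older identification. The paper's proof is identical in structure; the only cosmetic difference is that it writes the decay factor as $\tau=\theta^{1/n}$ for the $L^n$-norm rather than $\theta$ for the $n$-th power.
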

\begin{proof}
Let $\lambda := \epsilon^{\frac{1}{n}}$ and apply Proposition~\ref{pr:growth} to any $B(y_0,4r)\subset B(x_0,R/2)$, for $\mu > 0$ to be chosen below. We add 
\[
 C\mu^{-1} \int_{B(y_0,r)\cap D} |\nabla u|^n
\]
to both sides of \eqref{eq:growth:peqn}. Then we find
\[
\brac{1+C\mu^{-1}} \int_{B(y_0,r)\cap D} |\nabla u|^n \leq C\ \brac{\epsilon^{\frac{1}{n}} + \mu^{n-1}+\mu^{-1}} \int_{B(y_0,4r)\cap D} |\nabla u|^n.
\]
We choose $\epsilon, \mu >0$ small enough so that $\tau < 1$, where
\[
 \tau := \brac{\frac{C\ \brac{\epsilon^{\frac{1}{n}} + \mu^{n-1}+\mu^{-1}}}{1+C\mu^{-1}}}^{\frac{1}{n}}.
\]
We have for any $B(y_0,4r)\subset B(x_0,R/2)$ 
\[
\|\nabla u\|_{L^n(B(y_0,r)\cap D)} \leq \tau \|\nabla u\|_{L^n(B(y_0,4r)\cap D)}.
\]
Iterating this on successively smaller balls, cf. e.g. \cite[Chapter III, Lemma 2.1]{GiaquintaMultipleIntegrals}, we find that for a uniform $\alpha = \alpha(\tau) > 0$ and for any $B(y_0,4r) \subset B(x_0,R/2)$,
\[
 \|\nabla u\|_{L^n(B(y_0,r)\cap D)} \aleq \brac{\frac{r}{R}}^{\alpha} \|\nabla u\|_{L^n(B(x_0,R)\cap D)}.
\]
In particular, we have by Poincar\'e inequality 
\[
 \sup_{B(y_0,4r) \subset B(x_0,R/2)} r^{-\alpha-1}\|u-(u)_{B(y_0,r){\cap D}}\|_{L^n(B(y_0,r)\cap D)} \aleq R^{-\alpha}\, \|\nabla u\|_{L^n(B(x_0,R)\cap D)}.
\]
By the characterization of Campanato spaces and H\"older spaces, e.g. see \cite[Chapter III, p.75]{GiaquintaMultipleIntegrals}, this implies
\[
 \sup_{x, y \in B(x_0,R/2){\cap} \overline{D}}  \frac{|u(x)-u(y)|}{|x-y|^{\alpha}} \aleq R^{-\alpha} \|\nabla u\|_{L^n(B(x_0,R)\cap D)}.
\]
\end{proof}

\section{H\"older-continuity for solutions to a supercritical system}\label{s:genericHoelder}
In Proposition~\ref{pr:growth} we showed that solutions from Theorem~\ref{th:main} satisfy certain growth estimates. For $p=n$ these growth estimates imply H\"older continuity by an iteration argument, as we have seen in Proposition~\ref{pr:main:hoelderpn}. 

For $p < n$ more work is needed. The following Proposition shows that under a smallness assumption solutions to systems satisfying
\begin{equation}\label{eq:plapnablaup} |\div(|\nabla u|^{p-2} \nabla u)| \aleq |\nabla u|^p \end{equation} 
are H\"older continuous once the growth conditions from Proposition~\ref{pr:growth} are satisfied, that is when \eqref{eq:growthcond} and \eqref{eq:growthcondinter} below are assumed \emph{a priori}. 
Observe that without assuming \emph{a priori} the growth conditions \eqref{eq:growthcond} and \eqref{eq:growthcondinter} below on the solution $u$, there is no hope for proving \emph{any} regularity for solutions to a systems that have a structure of \eqref{eq:plapnablaup}. Indeed, it is easy to check that $\log \log \frac{2}{|x|}$ and $\sin \log \log \frac{2}{|x|}$ satisfy \eqref{eq:plapnablaup} for $p= n$.

In the next section, in order to prove Theorem~\ref{th:main}, we use the reflection method from Scheven's \cite{Scheven-2006} to obtain an equation of the form \eqref{eq:gensys:eq}. Since we already obtained the necessary growth estimates in Proposition~\ref{pr:growth}, the following proposition then leads to regularity.
\begin{proposition}\label{pr:gensysreg}
Let $D \subset \R^n$ be a {smooth, bounded domain} and let $\mathcal{M}$ be a smooth, compact $(n-1)$-dimensional manifold.
%
%
Assume that $u \in W^{1,p}(D,\R^N)$ is a solution to
\begin{equation}\label{eq:gensys:eq}
 \div(|G(x)\nabla u(x)|^{p-2} G(x)\nabla u(x)) = f_{{u}}(x),
\end{equation}
where $f_{{u}} \in L^1(D,\R^N)$ satisfies the following  estimate
\begin{equation}\label{eq:gensys:fgrowth}
 |f_{{u}}(x)| \leq C\ |\nabla u(x)|^p
\end{equation}
and $G \in C^\infty(\overline{D},GL(n))$. 

Moreover, assume \emph{a priori} that for every $B(x_0,R) \subset D$, $\lambda > 0$ such that
\begin{equation}\label{eq:gensys:lambda}
 \sup_{B(y_0,r)\subset  B(x_0,R)} r^{p-n} \int_{B(y_0,r)} |\nabla \solu|^p < \lambda^p
\end{equation}
the solution $u$ already satisfies the following growth condition on any $B(y_0,4r) \subset B(x_0,R)$:

If $B(y_0,2r) \cap \mathcal{M} = \emptyset$, then
\begin{equation}\label{eq:growthcond}
 \int_{B(y_0,r)} |\nabla u|^p \leq C \lambda \int_{B(y_0,4r)} |\nabla u|^p + C \lambda^{1-p} r^{-p}\int_{B(y_0,4r)} |\solu-(\solu)_{B(y_0,4r)}|^p
\end{equation}
and, if $B(y_0,2r) \cap \mathcal{M} \neq \emptyset$, then
\begin{equation}\label{eq:growthcondinter}
\begin{split}
 \int_{B(y_0,r)} |\nabla u|^p \leq C \lambda \int_{B(y_0,4r)} |\nabla u|^p &+ C \lambda^{1-p} r^{-p}\int_{B(y_0,4r)} |\solu-(\solu)_{B(y_0,4r)}|^p\\
 &+ C\lambda^{1-p}r^{-p}\int_{B(y_0,4r)} |\solu- (\solu)_{B(y_0,4r)\cap \mathcal{M}}|^p\\
 &+C\lambda^{1-p}r^{1-p}\int_{B(y_0,4r)\cap \mathcal{M}} |\solu- (\solu)_{B(y_0,4r)\cap \mathcal{M}}|^p.
\end{split}
\end{equation}
Then there exist constants $\alpha = \alpha(G,p,n,C,D)$, $\epsilon>0$ such that if \eqref{eq:gensys:lambda} holds on some $B(x_0,R) \subset D$ for $\lambda < \epsilon$, then $u \in C^{\alpha}(B(x_0,R/2),{\R^N})$. Moreover, we have the estimate
\[
 \sup_{x, y \in B(x_0,R/2)}  \frac{|u(x)-u(y)|}{|x-y|^{\alpha}} \leq C_0\, R^{-\alpha} \brac{\sup_{B(y_0,r)\subset  B(x_0,R)} r^{p-n} \int_{B(y_0,r)} |\nabla \solu|^p}^{\frac{1}{p}}.
\]
The constant $C_0$ depends on $\mathcal{M}$, $D$, $C$, and $G$.
\end{proposition}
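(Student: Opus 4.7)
The strategy is Campanato-style: we aim to establish the power decay
\[
\Phi(y_0,\rho) := \rho^{p-n}\int_{B(y_0,\rho)}|\nabla u|^p \le C(\rho/R)^{p\alpha}\,\lambda^p
\]
for some $\alpha>0$ and every $B(y_0,\rho)\subset B(x_0,R/2)$, where $\lambda$ is the quantity in \eqref{eq:gensys:lambda}. Together with Poincar\'e's inequality $\int_{B_\rho}|u-(u)_{B_\rho}|^p \le C\rho^p\int_{B_\rho}|\nabla u|^p = C\rho^n\Phi(y_0,\rho)$, this yields the Campanato bound $\rho^{-n-p\alpha}\int_{B(y_0,\rho)}|u-(u)_{B(y_0,\rho)}|^p \le CR^{-p\alpha}\lambda^p$, hence $u\in C^{0,\alpha}(B(x_0,R/2))$ with the quantitative estimate claimed. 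By the standard iteration lemma (e.g.\ \cite[Chapter III, Lemma 2.1]{GiaquintaMultipleIntegrals}), it suffices to prove the one-step \emph{decay lemma}: there exist $\tau\in(0,1)$, $\theta\in(0,1/8)$, and $\epsilon_0>0$ such that whenever \eqref{eq:gensys:lambda} holds with $\lambda<\epsilon_0$, we have $\Phi(y_0,\theta\rho)\le \tau\Phi(y_0,\rho)$ on every $B(y_0,\rho)\subset B(x_0,R/2)$.

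The decay lemma is to be proved by a blow-up contradiction, in the spirit of \cite{Hardt-Kinderlehrer-Lin-1986,Hardt-Lin-1987}. If it fails, one extracts a sequence of solutions $u_k$ of \eqref{eq:gensys:eq}, each satisfying the hypotheses, together with balls $B(y_k,r_k)$ for which $\lambda_k^p := \Phi_{u_k}(y_k,r_k)\to 0$ but $\Phi_{u_k}(y_k,\theta r_k) > \tau \Phi_{u_k}(y_k,r_k)$ for every prospective $\theta$. Rescale by setting
\[
v_k(z) := \lambda_k^{-1}\bigl(u_k(y_k+r_k z)-c_k\bigr), \qquad z \in B_1,
\]
with $c_k:=(u_k)_{B(y_k,r_k)}$ when $B(y_k,2r_k)\cap\mathcal{M}=\emptyset$, and $c_k:=(u_k)_{B(y_k,r_k)\cap\mathcal{M}}$ otherwise. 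Then $\int_{B_1}|\nabla v_k|^p = 1$, and the growth estimates \eqref{eq:growthcond}/\eqref{eq:growthcondinter} are scale-invariant and transfer verbatim to $v_k$; combined with Poincar\'e this yields a uniform $W^{1,p}(B_1)$-bound for $v_k$. A change of variables shows that $v_k$ satisfies
\[
\div\bigl(|G_k(z)\nabla v_k|^{p-2}G_k(z)\nabla v_k\bigr)=\lambda_k^{1-p}r_k^{p}\,f_{u_k}(y_k+r_k z), \qquad G_k(z) := G(y_k+r_k z),
\]
whose right-hand side is bounded in absolute value by $C\lambda_k|\nabla v_k|^p$ by \eqref{eq:gensys:fgrowth} and hence vanishes in $L^1(B_1)$ as $k\to\infty$.

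Up to a subsequence, $y_k\to y_\infty\in\overline{D}$, $G_k\to G_\infty:=G(y_\infty)$ uniformly on $B_1$, and $v_k\rightharpoonup v_\infty$ weakly in $W^{1,p}(B_1)$. The monotonicity of the $p$-Laplace operator applied with the test function $v_k-v_\infty$, together with the vanishing of the inhomogeneity, upgrades this to $\nabla v_k\to \nabla v_\infty$ strongly in $L^p_{\mathrm{loc}}(B_1)$, so $v_\infty$ solves the constant-coefficient equation $\div(|G_\infty\nabla v_\infty|^{p-2}G_\infty\nabla v_\infty)=0$, either on $B_1$ (when $y_\infty\notin\mathcal{M}$ or $\mathrm{dist}(y_k,\mathcal{M})/r_k\to\infty$) or on the half-ball carved out by the tangent hyperplane $T_{y_\infty}\mathcal{M}$ (otherwise). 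In the latter case, the last two terms of \eqref{eq:growthcondinter} rewritten for $v_k$, combined with the centering by $c_k$, force the trace of $v_\infty$ to vanish on the limiting hyperplane. An invertible linear change of coordinates turns the equation into the standard $p$-Laplace equation (with homogeneous Dirichlet data on a flat face in the boundary case), so the regularity results of Uhlenbeck \cite{Uhlenbeck-1977} and Tolksdorf \cite{tolksdorf1984regularity} produce Morrey-type decay $\theta^{p-n}\int_{B_\theta}|\nabla v_\infty|^p\le C_\star\theta^{p\gamma}$ for some $\gamma>0$. Since $\Phi_{u_k}(y_k,\theta r_k)/\Phi_{u_k}(y_k,r_k)=\theta^{p-n}\int_{B_\theta}|\nabla v_k|^p\to \theta^{p-n}\int_{B_\theta}|\nabla v_\infty|^p$, fixing $\theta$ with $2C_\star\theta^{p\gamma}<\tau$ yields the contradiction. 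The main technical hurdles are the strong $W^{1,p}$-convergence --- the inhomogeneity converges to zero only in $L^1$, so the monotonicity of the $p$-Laplace operator has to be exploited carefully --- and the clean identification of a homogeneous Dirichlet trace for $v_\infty$ in the boundary case, where the straightening of $\mathcal{M}$ under rescaling must be tracked simultaneously.
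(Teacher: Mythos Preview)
Your blow-up framework matches the paper's, but two points diverge from the actual argument and one of them is a genuine misreading of the setup.

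First, the half-ball/Dirichlet scenario does not occur. In the hypotheses of the proposition the equation \eqref{eq:gensys:eq} holds on all of $D$, and every ball $B(x_0,R)$ under consideration lies \emph{inside} $D$; the manifold $\mathcal{M}$ is not a boundary for the PDE but merely a reference set entering the extra terms of the growth hypothesis \eqref{eq:growthcondinter}. (In the application, Proposition~\ref{pr:plapnup}, $\mathcal{M}$ is the old boundary $\partial\R^n_+$, but after reflection the equation holds across it.) Hence the blow-up limit always solves the homogeneous equation on the \emph{full} ball, regardless of whether $y_\infty\in\mathcal{M}$, and there is no Dirichlet trace to identify. The extra terms in \eqref{eq:growthcondinter} are handled, for the limit, simply by the interior Lipschitz bound together with a trace estimate on $\mathcal{M}$.

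Second, and more substantively: you close the contradiction by asserting $\nabla v_k\to\nabla v_\infty$ strongly in $L^p_{\mathrm{loc}}$, so that $\theta^{p-n}\int_{B_\theta}|\nabla v_k|^p$ passes to the limit. The monotonicity argument you sketch is not complete as stated: testing with $\eta(v_k-v_\infty)$ produces $\int f_k\,\eta(v_k-v_\infty)$ with $|f_k|\aleq \lambda_k|\nabla v_k|^p$ only in $L^1$ and $v_k-v_\infty$ not in $L^\infty$, so one must either truncate (Boccardo--Murat style) or argue more carefully. The paper sidesteps strong gradient convergence entirely. It uses only the compact embedding $W^{1,p}\hookrightarrow L^p$ (both on the ball and on $\mathcal{M}$) to transfer Lipschitz-scale mean-oscillation bounds from the limit $w$ back to $w_i$, and then --- this is the point of the hypotheses --- \emph{applies the a priori growth estimates \eqref{eq:growthcond}--\eqref{eq:growthcondinter} to $u_i$ with $\lambda=\epsilon_0$} to convert those mean-oscillation bounds into a bound on $(\theta R)^{p-n}\int_{B(y_i,\theta R)}|\nabla u_i|^p$ of size $C\epsilon_i^p(\epsilon_0+\epsilon_0^{1-p}\theta^p)$. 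Choosing $\epsilon_0,\theta$ small gives the contradiction. So the growth hypotheses are not there merely to guarantee compactness of the blow-up sequence; they are the mechanism that closes the argument without ever needing convergence of the gradients.
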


To prove Proposition~\ref{pr:gensysreg} we follow the strategy developed in \cite{Hardt-Kinderlehrer-Lin-1986} and \cite[Theorem 2.4]{Hardt-Lin-1987}. The crucial result is that the equation for $u$ together with the growth assumptions \eqref{eq:growthcond} and \eqref{eq:growthcondinter} on $u$ imply the following decay estimate.

\begin{proposition}\label{pr:decay}
There are uniform constants $\epsilon, \theta \in (0,1)$ and $\overline{R} = \overline{R}(\mathcal{M}) \in (0,1)$ so that the following holds:

Let $u$ and {$D$} be as in Proposition~\ref{pr:gensysreg} and assume that for a ball $B(x_0,R) \subset {D}$ and $R \in (0,\overline{R})$ it holds
\begin{equation}\label{eq:small2}
 \lE{x_0,R}(\solu) := \sup_{B(y_0,r)\subset  B(x_0,R)} r^{p-n} \int_{B(y_0,r)} |\nabla \solu|^p < \epsilon^p.
\end{equation}
Then
\begin{equation}\label{eq:Ex0tleqhe}
\lE{x_0,\theta R}(\solu) \leq \frac{1}{2}\lE{x_0,R}(\solu).
\end{equation}
\end{proposition}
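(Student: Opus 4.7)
The natural approach is a blow-up/compactness scheme in the spirit of \cite{Hardt-Kinderlehrer-Lin-1986, Hardt-Lin-1987}. The plan is to argue by contradiction: fix $\theta \in (0, 1/4)$ (to be chosen later depending on $p$, $n$, $G$, $\mathcal{M}$) and suppose there exist $u_k$ solving \eqref{eq:gensys:eq}--\eqref{eq:growthcondinter}, together with $x_k \in \overline{D}$ and $R_k \in (0, \overline R)$, such that $\lE{x_k, R_k}(u_k) = \epsilon_k^p \to 0$ but $\lE{x_k, \theta R_k}(u_k) > \tfrac12 \epsilon_k^p$. After passing to subsequences, $x_k \to x_\infty \in \overline D$ and, splitting into cases, either $\mathrm{dist}(x_k, \mathcal{M})/R_k \to \infty$ (interior case) or the rescaled manifolds $\mathcal{M}_k := R_k^{-1}(\mathcal{M} - x_k)$ converge locally in Hausdorff distance to the tangent hyperplane $\mathcal{M}_\infty$ at $x_\infty \in \mathcal{M}$ (boundary case). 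Define the blow-ups
\[
v_k(y) := \frac{u_k(x_k + R_k y) - c_k}{\epsilon_k}, \qquad y \in B(0,1),
\]
with $c_k$ the mean of $u_k$ on $B(x_k, R_k)$ (interior) or on $B(x_k, R_k) \cap \mathcal{M}$ (boundary). A short computation using scale invariance gives $\lE{0, 1}(v_k) = 1$ while $\lE{0, \theta}(v_k) > 1/2$, and Poincar\'e yields $\|v_k\|_{W^{1,p}(B(0,1))} \leq C$, so up to a subsequence $v_k \rightharpoonup v$ weakly in $W^{1,p}$ and strongly in $L^p$.

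A direct computation shows that $v_k$ satisfies the rescaled equation
\[
\div\bigl( |G_k(y)\nabla v_k|^{p-2} G_k(y) \nabla v_k \bigr) = \tilde f_k(y), \qquad |\tilde f_k(y)| \leq C\,\epsilon_k\, |\nabla v_k(y)|^p,
\]
where $G_k(y) := G(x_k + R_k y) \to G_\infty := G(x_\infty)$ uniformly on compacts. The crucial step is to upgrade the weak convergence to strong convergence of $\nabla v_k$ in $L^p_{\rm loc}(B(0,1))$, so that the nonlinear term may be identified in the limit. Testing the rescaled equation against $\eta(v_k - v)$ for a smooth cutoff $\eta$ and invoking the classical monotonicity inequality $(|\xi_1|^{p-2}\xi_1 - |\xi_2|^{p-2}\xi_2) \cdot (\xi_1 - \xi_2) \geq c_p |\xi_1 - \xi_2|^p$ valid for $p \geq 2$, one bounds $\int \eta\,|G_k \nabla v_k - G_k \nabla v|^p$ by error terms controlled via the $L^p$-convergence of $v_k$, the vanishing $\epsilon_k \to 0$ in front of $\tilde f_k$, and the a priori growth estimates \eqref{eq:growthcond}--\eqref{eq:growthcondinter}, which prevent concentration of $|\nabla v_k|^p$ and control the oscillation of $v_k$ on $\mathcal{M}_k$ in the boundary case. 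I expect this strong convergence step, in particular the matching of the $\mathcal{M}_k$-averaged term in \eqref{eq:growthcondinter} with the trace behaviour of $v$ on the limit hyperplane $\mathcal{M}_\infty$, to be the principal technical difficulty.

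Once strong convergence is secured, passing to the limit in the rescaled equation gives that $v \in W^{1,p}(B(0,1))$ solves the constant-coefficient homogeneous system $\div\bigl(|G_\infty \nabla v|^{p-2} G_\infty \nabla v\bigr) = 0$ in $B(0,1)$, together with a homogeneous Neumann condition along $\mathcal{M}_\infty$ in the boundary case. Classical interior regularity for the $p$-Laplace system (Uhlenbeck \cite{Uhlenbeck-1977}, Tolksdorf \cite{tolksdorf1984regularity}), combined in the boundary case with reflection across the hyperplane $\mathcal{M}_\infty$, then gives a uniform decay
\[
\lE{0, \tau}(v) \leq C_\ast\, \tau^{p \alpha_0}\, \lE{0, 1}(v) \leq C_\ast\, \tau^{p \alpha_0}, \qquad \tau \in (0, 1/2),
\]
for some $\alpha_0 > 0$ and $C_\ast = C_\ast(G_\infty, p, n)$. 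Choosing $\theta \in (0, 1/4)$ with $C_\ast \theta^{p \alpha_0} < 1/4$ and using the strong convergence of $\nabla v_k$ to pass $\lE{0, \theta}(v_k) \to \lE{0, \theta}(v)$ contradicts $\lE{0, \theta}(v_k) > 1/2$, completing the proof.
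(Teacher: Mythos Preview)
Your blow-up/contradiction framework matches the paper's, but the mechanism for reaching the contradiction differs in an essential way, and your proposed route carries a gap at exactly the step you flag.

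The paper does \emph{not} attempt strong $L^p_{\rm loc}$-convergence of the gradients. After extracting a weak $W^{1,p}$-limit $w$ of $w_i := \epsilon_i^{-1}\bigl(u_i - (u_i)_{B(x_0,R_0)}\bigr)$ and identifying the homogeneous limit equation $\div(|G\nabla w|^{p-2}G\nabla w) = 0$ (via \cite{Dolzmann-Hungerbuehler-Mueller-1997}), the paper uses only the interior Lipschitz estimate for $w$ together with the strong $L^p$-convergence $w_i \to w$ (both in $B(x_0,R_0)$ and on $\mathcal{M}$). This yields, for $r \in (\theta R_0, R_0)$ and $i$ large,
\[
r^{-n}\int_{B(z,r)}|u_i - (u_i)_{B(z,r)}|^p \;\aleq\; \epsilon_i^p\,(r^p + \theta^p),
\]
with analogous bounds for the $\mathcal{M}$-centred averages appearing in \eqref{eq:growthcondinter}. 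The contradiction then comes \emph{directly from the hypothesised growth estimates} \eqref{eq:growthcond}--\eqref{eq:growthcondinter} applied with $\lambda = \epsilon_0$: they convert the $L^p$-oscillation bound above into
\[
(\theta R_0)^{p-n}\int_{B(y_i,\theta R_0)}|\nabla u_i|^p \;\leq\; C\,\epsilon_i^p\,\bigl(\epsilon_0 + \epsilon_0^{1-p}\theta^p\bigr),
\]
which is $< \tfrac12 \epsilon_i^p$ for suitable $\epsilon_0,\theta$. This Caccioppoli-type bridge from oscillation to energy is the entire reason \eqref{eq:growthcond}--\eqref{eq:growthcondinter} are assumed in Proposition~\ref{pr:gensysreg}; in your proposal they appear only as auxiliary anti-concentration input.

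Your alternative route has a genuine gap at the right-hand side term: testing against $\eta(v_k - v)$ produces $\epsilon_k\int |\nabla v_k|^p\,|v_k - v|$, and since $v_k = \epsilon_k^{-1}(u_k - c_k)$ is \emph{not} uniformly bounded in $L^\infty$, this does not obviously tend to zero without higher integrability of $\nabla v_k$ (e.g.\ a Gehring-type argument), which you have not supplied. Note also that in the setting of Proposition~\ref{pr:gensysreg} the manifold $\mathcal{M}$ lies in the \emph{interior} of $D$ (the geometric reflection is performed beforehand, in Proposition~\ref{pr:plapnup}); thus there is no interior/boundary dichotomy for the blow-up and no Neumann condition for the limit $v$, which solves the interior system on the full ball. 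The manifold $\mathcal{M}$ enters only through the extra oscillation terms in \eqref{eq:growthcondinter} at the final step.
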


\begin{proof}
 It suffices to prove
 \begin{equation}\label{eq:insteadEx0tleqhe}
(\theta R)^{p-n} \int_{B(y_0,\theta R)} |\nabla \solu|^p \leq \frac{1}{2}\lE{x_0,R}(\solu) \quad \text{for any } B(y_0,4\theta R) \subset B(x_0,R/2).
\end{equation}
Indeed, \eqref{eq:Ex0tleqhe}  follows from \eqref{eq:insteadEx0tleqhe} by taking smaller $\theta$ and observing that $B(x_1,R_1) \subset B(x_2,R_2)$ implies $\lE{x_1,R_1}(\solu) \leq \lE{x_2,R_2}(\solu)$.

%

Assume the claim \eqref{eq:insteadEx0tleqhe} is false. Then, for any $\theta \in (0,1)$ we have a sequence of balls with $B(y_i,4\theta R_i)\subset B(x_i,R_i/2)\subset {D}$, a sequence $(\epsilon_i)_{i=1}^\infty$ satisfying $\lim_{i \to \infty} \epsilon_i = 0$, and a sequence $(\solu_i)_{i=1}^\infty \subset W^{1,p}({D},\R^N)$ of solutions to \eqref{eq:gensys:eq} satisfying the growth assumptions of Proposition~\ref{pr:gensysreg}, such that
\begin{equation}\label{eq:small2i}
 \sup_{B(y,r)\subset  B(x_i,R_i)} r^{p-n} \int_{B(y,r)} |\nabla \solu_i|^p = \epsilon_i^p,
\end{equation}
but
\begin{equation}\label{eq:notsmall}
(\theta R_i)^{p-n} \int_{B(y_i,\theta R_i)} |\nabla \solu_i|^p > \frac{1}{2} \epsilon_i^p.
\end{equation}
For simplicity, we assume that $R_i \equiv R_0$ and $x_i \equiv x_0$ for some $R_0 > 0$ and $x_0 \in \R^n$.

{
This is no loss of generality, since we can rescale the maps $u$ by the factor $R_0/R_i$. Observe that this rescales the manifold $\mathcal{M}$, but in a way that \eqref{eq:growthcondinter} still holds.}
%
%
Set 
\[
 w_i := \frac{1}{\epsilon_i}(u_i-(u_i)_{B(x_0,R_0)}).
\]
Clearly,
\begin{equation*}\label{eq:wimeanvalue}
 (w_i)_{B(x_0,R_0)} = 0\quad \mbox{for all $i \in \N$}.
\end{equation*}
Thus, we can apply Poincar\'e inequality and have by \eqref{eq:small2i},
\[
  \sup_{i \in \N}\|\nabla w_i \|^p_{L^p(B(x_0,R_0))} \aleq R_0^{n-p} \quad \mbox{and} \quad \sup_{i \in \N} \|w_i\|_{L^p(B(x_0,R_0))}^p \aleq R_0^{n-p+1}. 
\]
Thus, up to a subsequence denoted again by $w_i$, we find $w \in W^{1,p}(B(x_0,R_0),\R^N)$ such that as $i \to \infty$,
\begin{align*}
 w_i &\rightharpoonup w & &\mbox{weakly in } W^{1,p}(B(x_0,R_0)),\\
 w_i& \to w & &\mbox{strongly in } L^p(B(x_0,R_0)),\\
 w_i& \to w & &\mbox{strongly in } L^p(B(x_0,R_0) \cap \mathcal{M},d\H^{n-1}), \\
 w_i& \to w  & &\H^n \mbox{-a.e. on $B(x_0,R_0)$ and $\H^{n-1}$ \mbox{-a.e. on } $B(x_0,R_0) \cap \mathcal{M}$}.
\end{align*}
In particular,
\begin{equation}\label{eq:wmeanvalue}
 (w)_{B(x_0,R_0)} = 0,
\end{equation}
also
\[
 \|\nabla w \|^p_{L^p(B(x_0,R_0))} \aleq R_0^{n-p} \quad \mbox{and} \quad \|w\|^p_{L^p(B(x_0,R_0))} \aleq R_0^{n-p+1}. 
\]

Moreover, for any $\varphi \in C_c^\infty(B(x_0,R_0))$,
\[
 \int_{B(x_0,R_0)} |G\nabla \solw_i|^{p-2}G\nabla \solw_i\cdot \nabla \varphi = (\epsilon_i)^{1-p} \int_{B(x_0,R_0)} |G\nabla u_i|^{p-2}G\nabla u_i\cdot \nabla \varphi.
\]
Now, by \eqref{eq:gensys:eq} and \eqref{eq:gensys:fgrowth},
\[
 \left |\int_{B(x_0,R_0)} |G\nabla \solw_i|^{p-2}G\nabla \solw_i\cdot \nabla \varphi \right |\aleq (\epsilon_i)^{1-p} \|\varphi\|_{L^\infty(B(x_0,R_0))}\ \|\nabla u_i\|^p_{L^p(B(x_0,R_0))}.
\]
That is, by \eqref{eq:small2i}
\[
 \left |\int_{B(x_0,R_0)} |G\nabla \solw_i|^{p-2}G\nabla \solw_i\cdot \nabla \varphi \right | \aleq {\|\varphi\|_{L^\infty(B(x_0,R_0))}}R_0^{n-p} \epsilon_i\le \epsilon_i{\|\varphi\|_{L^\infty(B(x_0,R_0))}}.
\]

Now as in \cite[Section~4]{Dolzmann-Hungerbuehler-Mueller-1997}
\begin{equation}\label{eq:plapsolw0}
 \div(|G\nabla \solw|^{p-2} G\nabla \solw) = 0 \quad \mbox{in $B(x_0,R_0)$}.
\end{equation}

From \eqref{eq:wmeanvalue} and the Lipschitz estimates for solutions to \eqref{eq:plapsolw0}, see \cite{Uhlenbeck-1977} as well as \cite{Mingione-2011,Duzaar-Mingione-2011} and in particular \cite[(1.7)]{Kuusi-Mingione-2012}, we have for any $B(z,r) \subset B(x_0,R_0/2)$,
\begin{equation*}\label{eq:Lipschitzestplaplace}
 r^{-n}\int_{B(z,r)} |\solw - (\solw)_{B(z,r)}|^p \aleq r^p,
\end{equation*}
and if additionally  $B(z,r) \cap \mathcal{M} \neq \emptyset$ and $r<\overline{R}$ for $\overline{R} = \overline{R}(\mathcal{M})$ small enough, then
\[
 r^{1-n}\int_{\mathcal{M} \cap B(z,r)} |\solw - (\solw)_{\mathcal{M} \cap B(z,r)}|^p  + r^{-n}\int_{B(z,r)} |\solw - (\solw)_{\mathcal{M} \cap B(z,r)}|^p \aleq r^p.
\]
On the other hand, by strong $L^p$-convergence of $w_i$ to $w$, we find $i(\theta) \in \N$ so that for $i \geq i(\theta)$ and for any $r \in (\theta R_0,R_0)$ such that $B(z,r) \subset B(x_0,R_0)$, 
\begin{equation*}\label{eq:strongconvestimate}
 r^{1-n} \int_{B(z,r)\cap \mathcal{M}} |w_i-w|^p + r^{-n} \int_{B(z,r)} |w_i-w|^p \leq \theta^p.
\end{equation*}
Combining these estimates we get for any $i \geq i(\theta)$ and for any $r \in (\theta R_0,R_0)$ such that $B(z,r) \subset B(x_0,R_0/2)$,
\[
 r^{-n} \int_{B(z,r)} |u_i-(u_i)_{B(z,r)}|^p = \epsilon_i^p r^{-n} \int_{B(z,r)} |w_i-(w_i)_{B(z,r)}|^p \aleq \epsilon_i^p \brac{r^p + \theta^p}.
\]
If additionally $B(z,r) \cap \mathcal{M} \neq \emptyset$, then 
\[
r^{-n}\int_{B(z,r)} |u_i-(u_i)_{B(z,r)\cap \mathcal{M}}|^p = \epsilon_i^p\, r^{-n}\int_{B(z,r)} |w_i-(w_i)_{B(z,r)\cap \mathcal{M}}|^p \aleq \epsilon_i^p \brac{r^p + \theta^p}
\]
and
\[
r^{1-n}\int_{B(z,r)\cap \mathcal{M}} |u_i-(u_i)_{B(z,r)\cap \mathcal{M}}|^p \aleq \epsilon_i^p \brac{r^p + \theta^p}.
\]
{
We now apply the growth estimates \eqref{eq:growthcond} and \eqref{eq:growthcondinter} with $\lambda=\epsilon _0 \geq \epsilon_i$ of the solutions $u_i$ to find 
\[
(\theta R_0)^{p-n} \int_{B(y_i,\theta R_0)} |\nabla \solu_i|^p \leq C\ \epsilon_i^{p} \, \left (\epsilon_0+\epsilon _0^{1-p}\theta^p \right).
\]

By choosing $\epsilon_0$ and $\theta$ sufficiently small so that $\epsilon_0+\epsilon_0^{1-p}\theta^p<1/2$ we arrive at a contradiction with \eqref{eq:notsmall}.}
\end{proof}

\begin{proof}[Proof of Proposition~\ref{pr:gensysreg}]
We argue as in the proof of Proposition~\ref{pr:main:hoelderpn}: Assume that \eqref{eq:gensys:lambda} is satisfied on $B(x_0,R)$ for some $\lambda < \epsilon$.
Iterating the estimate from Proposition~\ref{pr:decay} on successively smaller balls, cf. \cite[Chapter III, Lemma 2.1]{GiaquintaMultipleIntegrals}, we find a small $\alpha > 0$ such that for all $r < R$ and $B(y_0,r) \subset B(x_0,R/2)$,
\[
 r^{p-n}\int_{B(y_0,r)} |\nabla u|^p \aleq \brac{\frac{r}{R}}^{\alpha p} E(x_0,R).
\]
In particular, for all $r < R$ and $B(y_0,r) \subset B(x_0,R/2)$,
\[
 r^{-\alpha p-n}\int_{B(y_0,r)} |u-(u)_{B(y_0,r)}|^p \aleq r^{p-\alpha p-n}\int_{B(y_0,r)} |\nabla u|^p \aleq R^{-\alpha p} E(x_0,R).
\]
We conclude by the identification of Campanato and H\"{o}lder spaces, see \cite[Chapter III, p.75]{GiaquintaMultipleIntegrals}. 
\end{proof}

\section{\texorpdfstring{$\epsilon$}{}-regularity: Proof of Theorem~\ref{th:main}}\label{s:proofthmain}
The proof of Theorem~\ref{th:main} is a combination of the growth estimate for solutions, Proposition~\ref{pr:growth}, the reflection method as in Scheven's \cite{Scheven-2006}, and Proposition~\ref{pr:gensysreg}. 
More precisely, we use the reflection method to find a solution to \eqref{eq:gensys:eq} from Proposition~\ref{pr:gensysreg}. The growth estimates \eqref{eq:growthcond} and \eqref{eq:growthcondinter} required in Proposition~\ref{pr:gensysreg} come from Proposition~\ref{pr:growth}: They hold for the unreflected solution and by an easy argument hold also for the reflection. 
To set up the reflection method we first gather some standard results. 
\begin{lemma}\label{la:distbd}
Let $D$ be a smooth, bounded domain in $\R^n$. There exists some $R_0 = R_0(D)$ such that the following holds for any $R \in (0,R_0)$. Let $\solu\in W^{1,p}(D,\R^N)$ be a solution to \eqref{eq:soluspheredef} and $\epsilon \in (0,1)$. If
\begin{equation}\label{eq:distbdsmallness}
 \sup_{B(y_0,r)\subset  B(x_0,R)} r^{p-n} \int_{B(y_0,r)\cap D} |\nabla \solu|^p < \epsilon^p
\end{equation}
and $B(x_0,R/2) \cap \partial D \neq \emptyset$, then
\[
 \sup_{x \in B(x_0,R/2) \cap D} \dist(\solu(x),\S^{N-1}) \leq C\epsilon.
\]
Here $C$ is a constant depending on $\partial D$.
\end{lemma}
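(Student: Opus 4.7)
The plan is to bound $w := 1 - |u|^2 \in [0,1]$ pointwise by $C\epsilon$ on $B(x_0,R/2)\cap D$; since $|u|\leq 1$ by Lemma~\ref{la:boundedness}, this will give $\dist(u(x),\S^{N-1}) = 1-|u(x)| \leq w(x) \leq C\epsilon$. Because $|u|=1$ on $\partial D$, the function $w$ has zero trace on $\partial D$, and $|\nabla w|\leq 2|\nabla u|$. Extending $w$ by zero across $\partial D$ produces $\bar w \in W^{1,p}_{\loc}(\R^n)$ to which the smallness hypothesis transfers: $\rho^{p-n}\int_{B(y_0,\rho)} |\nabla \bar w|^p \leq 2^p\epsilon^p$ for every $B(y_0,\rho)\subset B(x_0,R)$.

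Two ingredients drive the pointwise estimate. First, a \emph{boundary-side mean-value bound}: for $R_0=R_0(\partial D)$ smaller than the reach of $\partial D$, the smoothness of $\partial D$ guarantees a uniform Lebesgue density $|B(y,2d)\setminus D|\geq c(\partial D)\,d^n$ whenever $y\in D$ with $d=\dist(y,\partial D)<R_0$; combining this with the Poincar\'e inequality for $\bar w$ on $B(y,2d)$ and the vanishing of $\bar w$ off $D$ yields $|(\bar w)_{B(y,2d)}|\leq C(\partial D)\,\epsilon$. Second, an \emph{interior Lipschitz bound}: the interior equation $\div(|\nabla u|^{p-2}\nabla u)=0$ combined with Uhlenbeck's $C^{1,\alpha}$-regularity \cite{Uhlenbeck-1977} (cf.~\cite{tolksdorf1984regularity,Kuusi-Mingione-2017}) yields $\|\nabla u\|_{L^\infty(B(y,d(y)/2))} \leq C\brac{|B(y,d(y))|^{-1}\int_{B(y,d(y))} |\nabla u|^p}^{1/p}$ whenever $B(y,d(y))\subset B(x_0,R)$, hence $|\nabla u(y)|\leq C\epsilon/d(y)$ by the Morrey hypothesis, and in particular $|\nabla w|\leq C\epsilon/d$ pointwise in the interior.

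For $x\in B(x_0,R/2)\cap D$ with $d(x)\leq R/8$, insert the nested balls $B(x,d(x)/2)\subset B(x,2d(x))\subset B(x_0,R)$: the interior Lipschitz bound yields $|w(x)-(\bar w)_{B(x,d(x)/2)}|\leq C\epsilon$ (oscillation $\leq$ Lipschitz constant $\times$ radius $\sim \epsilon/d(x)\cdot d(x)$); a Poincar\'e step on $B(x,2d(x))$ combined with the Morrey hypothesis yields $|(\bar w)_{B(x,d(x)/2)}-(\bar w)_{B(x,2d(x))}|\leq C\epsilon$; and the first ingredient applied at $y=x$ yields $|(\bar w)_{B(x,2d(x))}|\leq C\epsilon$. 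The triangle inequality then gives $w(x)\leq C\epsilon$. For $x$ with $d(x)>R/8$, one connects $x$ to a point $x_1\in B(x_0,R/2)\cap D$ with $d(x_1)\leq R/8$ by a rectifiable path of length $\leq CR$ inside $\{y\in D\cap B(x_0,R): d(y)\geq R/32\}$ (this region is connected for $R\leq R_0$ thanks to the hypothesis $B(x_0,R/2)\cap\partial D\neq\emptyset$ and the geometry of $\partial D$), and integrates $|\nabla u|\leq C\epsilon/R$ along this path to obtain $|u(x)-u(x_1)|\leq C\epsilon$, whence $|w(x)-w(x_1)|\leq C\epsilon$, reducing to the previous case. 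The main technical subtlety is the invocation of the interior gradient bound for the $p$-harmonic \emph{system} (as opposed to a scalar equation), together with the uniformity in $\partial D$ of the density and Poincar\'e constants, which dictates the choice of $R_0$.
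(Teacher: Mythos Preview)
Your proof is correct and rests on the same core ingredients as the paper's (the interior Lipschitz estimate for the $p$-Laplace system from \cite{Uhlenbeck-1977,Kuusi-Mingione-2012} together with Poincar\'e under the Morrey smallness hypothesis), but the packaging is genuinely different. The paper works with $u$ itself: for each $x$ it sets $r=\tfrac{1}{10}\dist(x,\partial D)$, controls $|u(x)-(u)_{B(x,r)}|$ by the interior Lipschitz bound, then runs a single chain of a fixed number of overlapping balls of radius $r$ along the segment from $x$ to its boundary projection $z_1\in\partial D$, and finally bounds $\dist\big((u)_{B(z_1,r)\cap D},\S^{N-1}\big)$ by combining Poincar\'e with the trace theorem on $\partial D$. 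There is no case distinction on $d(x)$.

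Your route instead passes to the scalar $w=1-|u|^2$ and its zero extension $\bar w$ across $\partial D$: the boundary condition $|u|=1$ on $\partial D$ becomes $\bar w\equiv 0$ off $D$, and a single Poincar\'e inequality on a ball $B(x,2d(x))$ straddling $\partial D$, together with the uniform lower density $|B(x,2d(x))\setminus D|\geq c\,d(x)^n$, yields the mean-value bound $|(\bar w)_{B(x,2d(x))}|\leq C\epsilon$ directly. This is a clean alternative that sidesteps the trace theorem entirely. The price is the case split: for $d(x)>R/8$ the straddling ball may leave $B(x_0,R)$, so you integrate the pointwise bound $|\nabla u|\aleq \epsilon/R$ along a path in the thick region $\{d\geq R/32\}$ down to a point $x_1$ where the first case applies. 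That step needs a connectivity and short-path property of the thick region inside $B(x_0,R)$, which does hold for $R<R_0(\partial D)$ after flattening the boundary, but is an extra geometric input the paper's chain-of-balls argument avoids. Both approaches are at the same level of rigor; yours trades the trace theorem for a density-Poincar\'e argument and a bit more geometry.
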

\begin{proof}
Fix $x \in B(x_0,R/2){\cap D}$. Let $r := \frac{1}{10} \dist(x,\partial D)$, then by \eqref{eq:distbdsmallness} and the interior Lipschitz regularity {for} the $p$-Laplace equation, see \cite[(1.7)]{Kuusi-Mingione-2012},
\[
 |u(x)-(u)_{B(x,r)}|^p \aleq r^{p-n} \int_{B(x,5r)} | \nabla \solu|^p \leq \epsilon^p.
\]
Denote by $z_1 \in \partial D\cap B(x_0,R/2)$ the projection of $x$ onto $\partial D\cap B(x_0,R/2)$. Here we assume that $R < R_0$ for $R_0 = R_0(D)$ small enough {such that $z_1$ is well-defined.} 

{Let $y_0,y_1,\ldots,y_{10}$ be pairwise equidistant points on the line $[x,z_1]$ where $y_0 = x$ and $y_{10} = z_1$. That is, $|y_{i} - y_{i+1}| = r$.}

By triangle inequality, Poincar\'e inequality and again by \eqref{eq:distbdsmallness}, 
\[
\begin{split}
 |(u)_{B(x,r)}& - (u)_{B(z_1,r)\cap D}|^p\\ 
 &{\aleq \sum_{i=0}^{10} |(u)_{B(y_i,r)\cap D} -(u)_{B(y_{i+1},r) \cap D}|^p}\\
 &\aleq {\sum_{i=0}^{10} r^{p-n} \int_{B(y_i,4r) \cap D} |\nabla u|^p}\\
 &\aleq {\epsilon^p}. 
 \end{split}
\]
{From the second to third line, before applying Poincar\'e inequality, we also used that $|y_i-y_{i+1}| = r$, and thus (cf. footnote \ref{ft:AB})
\[
 |(u)_{B(y_i,r)\cap D} -(u)_{B(y_{i+1},r) \cap D}|^p \aleq \mvint_{B(y_i,4r) \cap D} |u -(u)_{B(y_i,4r) \cap D}|^p
\]}

Now for any $z_2 \in \partial D$
\[
 \dist((u)_{B(z_1,r)\cap D},\S^{N-1}) \aleq r^{-n} \int_{B(z_1,r) \cap D}|u(z_3)-u(z_2)|\ dz_3.
\]
Integrating $z_2$ over $\partial D \cap B(z_1,r)$ we find 
\[
\begin{split}
 \dist((u)_{B(z_1,r)\cap D},\S^{N-1}) &\aleq r^{-n} \int_{B(z_1,r) \cap D}|u(z_3)-(u)_{B(z_1,r) \cap \partial D}|\ dz_3\\
 &\quad+r^{1-n} \int_{B(z_1,r) \cap \partial D}|u(z_2)-(u)_{B(z_1,r) \cap \partial D}|\ dz_2.
\end{split}
 \]
By Poincar\'e inequality, trace theorem, and \eqref{eq:distbdsmallness}
\[
 \dist((u)_{B(z_1,r)\cap D},\S^{N-1}) \aleq \epsilon.
\]
Now the claim follows by triangle inequality for the distance,
\[
\begin{split}
 \dist(u(x),\S^{N-1}) &\leq |u(x)-(u)_{B(x,r)}| + |(u)_{B(x,r)}-(u)_{B(z_1,r)\cap D}|\\
 &\quad+ \dist((u)_{B(z_1,r)\cap D},\S^{N-1}).
\end{split}
 \]
\end{proof}
As an immediate corollary we obtain.
\begin{corollary}\label{co:solugeq12}
Let $\solu$ and $D$ be as in Theorem~\ref{th:main}. There exists $\epsilon_0 > 0$ such that if $B(x_0,R/2)\cap \partial D \neq \emptyset$ and \eqref{eq:distbdsmallness} holds for some $\epsilon < \epsilon_0$, then $|u| > \frac{1}{2}$ in $B(x_0,R/2)\cap D$.
\end{corollary}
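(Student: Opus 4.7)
The plan is to apply Lemma~\ref{la:distbd} directly and then use the triangle inequality to transfer a closeness-to-sphere estimate into a lower bound on $|u|$.

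More precisely, under the smallness assumption \eqref{eq:distbdsmallness} with $\epsilon < \epsilon_0$, Lemma~\ref{la:distbd} gives a constant $C = C(\partial D)$ such that
\[
 \dist(u(x), \S^{N-1}) \leq C\epsilon \quad \text{for every } x \in B(x_0, R/2) \cap D.
\]
For each such $x$, by definition of the distance, there exists $y \in \S^{N-1}$ with $|u(x) - y| \leq 2C\epsilon$ (say). The triangle inequality then yields
\[
 |u(x)| \geq |y| - |u(x) - y| \geq 1 - 2C\epsilon.
\]

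Therefore, choosing $\epsilon_0 := \min\{1, 1/(4C)\}$ ensures that $|u(x)| \geq 1/2$ for all $x \in B(x_0, R/2) \cap D$ whenever \eqref{eq:distbdsmallness} holds with $\epsilon < \epsilon_0$. There is essentially no obstacle here: all the work was done in establishing Lemma~\ref{la:distbd}, and the corollary reduces to a one-line triangle-inequality argument together with a suitable choice of the threshold $\epsilon_0$.
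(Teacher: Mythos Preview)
Your proposal is correct and matches the paper's approach: the paper simply states this as an immediate corollary of Lemma~\ref{la:distbd} without further proof, and your triangle-inequality argument is exactly the one-line justification implicit in that statement. (A minor simplification: since $\dist(u(x),\S^{N-1}) = \big||u(x)|-1\big|$, you get $|u(x)| \geq 1 - C\epsilon$ directly without choosing an approximate nearest point.)
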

As a consequence, when we reflect the maps from Theorem~\ref{th:main}, we obtain a critical equation with the growth estimates such that Proposition~\ref{pr:gensysreg} is applicable.
\begin{proposition}\label{pr:plapnup}
Let $\solu$ and $D$ be as in Theorem~\ref{th:main}. There exists $\epsilon_0 = \epsilon_0(D) > 0$ such that for any $B(x_0,4R) \subset \R^n$ on which $u$ satisfies \eqref{eq:distbdsmallness} for some $\epsilon<\epsilon_0$ there exists $v \in W^{1,p}(B(x_0,R),\R^N)$ such that 
\[
 v = u \quad \mbox{in $B(x_0,R) \cap D$},
\]
\begin{equation}\label{eq:reflectedeq}
 |\div(|\nabla \solv|^{p-2} \nabla \solv)|\aleq |\nabla v|^p \quad \text{in }B(x_0,R).
\end{equation}
Moreover, the $v$ satisfies the growth conditions from Proposition~\ref{pr:gensysreg}.
\end{proposition}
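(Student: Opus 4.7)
The plan is to adapt Scheven's reflection technique, using the inversion $\Phi(y) := y/|y|^2$ in the target sphere. Corollary~\ref{co:solugeq12} together with the smallness hypothesis \eqref{eq:distbdsmallness} ensures $|\solu| \geq \tfrac12$ on $B(x_0,R/2)\cap D$, so $\Phi$ is smooth on the range of $\solu$. After flattening $\partial D$ near $x_0$ as in the proof of Proposition~\ref{pr:growth}, I assume $\partial D \cap B(x_0,R) \subset \{x_n = 0\}$; the resulting smooth metric factor is harmless and can be absorbed into the matrix $G$ of Proposition~\ref{pr:gensysreg}. Setting $\sigma(x',x_n) := (x',-x_n)$, I define
\[
\solv(x) := \begin{cases} \solu(x), & x_n \geq 0, \\ \Phi\bigl(\solu(\sigma(x))\bigr), & x_n < 0. \end{cases}
\]
Since $|\solu| \equiv 1$ on $\{x_n = 0\}$, the two traces agree there, so $\solv \in W^{1,p}(B(x_0,R),\R^N)$.

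The equation for $\solv$ on $\{x_n<0\}$ is computed by the chain rule. The conformality of $\Phi$, expressed by $|D\Phi(y)\xi| = |\xi|/|y|^2$, gives $|\nabla \solv(x)| = |\nabla \solu(\sigma(x))|/|\solu(\sigma(x))|^2$, and composing with the original $p$-Laplace system produces on each half a system of the form $\div(|\nabla \solv|^{p-2}\nabla \solv) = f_\solv$ with $|f_\solv| \aleq |\nabla \solv|^p$, the lower-order term arising when derivatives fall onto the denominator $|\solu\circ\sigma|^{-2}$. The heart of the proof is to glue the two pieces across $\{x_n = 0\}$ distributionally, that is, to show that no singular measure on $\{x_n = 0\}$ is generated. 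For a test field $\phi \in C_c^\infty(B(x_0,R),\R^N)$, the boundary contribution after integrating by parts on each half is controlled by decomposing $\phi|_{\{x_n=0\}}$ pointwise into its $T_\solu\S^{N-1}$-tangential and $\solu$-parallel parts: the tangential extension is an admissible test field for the free boundary formulation of \eqref{eq:soluspheredef} and is absorbed into the weak equation, whereas for the $\solu$-parallel part the free boundary condition $|\nabla \solu|^{p-2}\partial_\nu \solu \parallel \solu$ combined with the identity $D\Phi(\solu)\xi = \xi - 2(\solu\cdot\xi)\solu$ (valid at $|\solu|=1$) forces $\partial_n \solv|_{\{x_n=0^-\}} = \partial_n \solu|_{\{x_n=0^+\}}$, cancelling the interface term. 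This interface gluing, which crucially relies on the sphere-geometry of $\Phi$, is the main technical obstacle.

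Finally, I transfer the growth estimates of Proposition~\ref{pr:growth} to $\solv$ to verify the hypotheses of Proposition~\ref{pr:gensysreg}, taking $\mathcal{M} := \{x_n = 0\}$. For balls $B(y_0, 4r)\subset B(x_0,R)$ lying on a single side of $\{x_n=0\}$, $\solv$ differs from $\solu\circ\sigma$ only by the smooth map $\Phi$, so \eqref{eq:growth:pllnbd0} translates directly to \eqref{eq:growthcond}. For balls straddling $\{x_n=0\}$, I sum \eqref{eq:growth:pllnbd} on $B(y_0,4r)\cap\{x_n>0\}$ with its reflected analogue on the lower half; the extra term $\||\solu|^2-1\|_{L^p(B^+)}^p$, which vanishes on $\mathcal{M}$, is controlled via a Poincar\'e-type argument combined with the bound $||\solu|-1| \leq |\solu - \solu_0|$ for any $\solu_0 \in \S^{N-1}$, giving back the bulk and boundary oscillations of $\solv$ relative to $(\solv)_{B(y_0,4r)\cap\mathcal{M}}$ that constitute the remaining terms in \eqref{eq:growthcondinter}.
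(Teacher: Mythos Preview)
Your proposal is correct and takes essentially the same approach as the paper: Scheven's reflection via the target inversion $\Phi(q)=q/|q|^2$, the conformality $|D\Phi(q)\xi|=|\xi|/|q|^2$, the gluing across $\{x_n=0\}$ using that the combined boundary trace $\varphi+\Sigma(u)\varphi=2\Pi(u)\varphi$ is tangential (which is your tangential/parallel decomposition rephrased), and the transfer of the growth estimates from Proposition~\ref{pr:growth} including the control of $\||u|^2-1\|_{L^p}$ via the boundary mean on $\mathcal{M}$. The only presentational difference is that the paper organizes the gluing by testing with $m\varphi$ for an explicit weight $m=|\tilde u|^{2(p-2)}$ on the lower half, whereas you fold this conformal factor into the lower-order right-hand side; both routes yield \eqref{eq:reflectedeq}.
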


\begin{proof}
The main point is to prove that $v$ satisfies the growth conditions. The estimate \eqref{eq:reflectedeq} follows from the geometric reflection, more precisely \cite[Lemma 2.5]{SchevenDiss}. But for reader's convenience we {state the argument in full in the case where the boundary is flat. This means that we work in a ball $B(x_0,4R)$ such that $B^+(x_0,4R)\subset D \subset \R^n_+$ and $\partial D \cap B(x_0,4R)=\partial \R^n_+\cap B(x_0,4R)$.}

If $B(x_0,R) \subset \R^n_+$ then we can just take $v \equiv u$. So assume that $B(x_0,R) \cap \partial \R^n_+ \neq \emptyset$, then for $\epsilon_0$ small enough we have $|u| > \frac{1}{2}$ in $B^+(x_0,R)$ by Corollary~\ref{co:solugeq12}.

Denote by $\tilde u$ the even reflection, i.e., 
\[
 \tilde{u}(x',x_n) :=u(x',|x_n|).
\]
Moreover, set 
\[
 \sigma(q) := \frac{q}{|q|^2}, \quad q \in \R^n \backslash \{0\}.
\]
Now we define the geometric reflection $v$ as
\[
 v(x) := \begin{cases}
          u(x)\quad &\mbox{$x \in B^+(x_0,R)$}\\
          \sigma (\tilde{u}(x)) \quad&\mbox{$x \in  B(x_0,R) \backslash \R^n_+$.}
         \end{cases}
\]
Since $|u| > \frac{1}{2}$ and $u$ is uniformly bounded by Lemma~\ref{la:boundedness}, $v$ is well-defined in $B(x_0,R)$.

We also set 
\[
 \Sigma_{ij}(q) := \partial_i \sigma^j (q) = \frac{\delta_{ij} - 2\frac{q^i q^j}{|q|^2}}{|q|^2}.
\]
That is, for $x \in B(x_0, R) \backslash \R^n_+$,
\begin{equation}\label{eq:nablavnablau}
 \nabla v(x) = \Sigma(\tilde{u}(x))\ \nabla \tilde{u}(x).
\end{equation}
Observe that $\Sigma$ is symmetric, and
\[
 \Sigma(q) = \frac{1}{|q|^2} \brac{I-2 \frac{q}{|q|}\otimes \frac{q}{|q|}}
\]
and that $\frac{q}{|q|}$ is an eigenfunction to the eigenvalue $-\frac{1}{|q|^2}$, and any orthonormal basis of $\left (\frac{q}{|q|} \right )^\perp$ is the basis of the eigenspace of the eigenvalue  $\frac{1}{|q|^2}$. In particular,
\[
 |\Sigma(q) w| = \frac{1}{|q|^2} |w| \quad \forall w \in \R^N.
\]
Thus,
\begin{equation}\label{eq:nablaveqnablau}
|\nabla v(x)| = \begin{cases}
                 |\nabla \tilde{u}(x)| \quad &x \in {B^+(x_0,R)}\\
                 \frac{1}{|\tilde{u}(x)|^2}|\nabla \tilde{u}(x)| \quad &x \in {B(x_0,R)\setminus \R^n_+}.
                \end{cases}
\end{equation}

Also observe that for $|q| = 1$,
\[
 \Sigma(q) v = \Pi(q)v - \Pi^\perp(q)v \quad \mbox{for all  $v \in \R^N$},
\]
where $\Pi(q):=I-q\otimes q$ is the orthogonal projection onto $T_q \S^{N-1} =  q^\perp$ and $\Pi^\perp(q):=q\otimes q$ is the orthogonal projection onto $(T_q \S^{N-1})^\perp = \operatorname{span} \{q\}$.

Therefore, for $\varphi \in C_c^\infty(B(x_0,R),\R^N)$, since $\partial_\nu \solu \perp T_{\solu} \S^{N-1}$,
\[
 \int_{{B^+(x_0,R)}} |\nabla \solu|^{p-2} \nabla \solu \cdot \nabla \varphi + \int_{{B(x_0,R)\setminus \R^n_+}} |\nabla \tilde{\solu}|^{p-2}\nabla \tilde{\solu} \cdot \nabla (\Sigma(\tilde{u}) \varphi) = 0.
\]
In particular,
\[
 \int_{{B(x_0,R)}} |\nabla \tilde{\solu}|^{p-2} \nabla \solv \cdot \nabla \varphi  = -\int_{{B(x_0,R)\setminus \R^n_+}} |\nabla \tilde{\solu}|^{p-2} \nabla \tilde{\solu}\cdot \nabla (\Sigma(\tilde{u}))\ \varphi. 
\]
Combining this with \eqref{eq:nablaveqnablau},
\[
\begin{split}
 \int_{{B(x_0,R)}} |\nabla \solv|^{p-2} \nabla \solv \cdot \nabla (m \varphi)  &= -\int_{{B(x_0,R)\setminus \R^n_+}} |\nabla \tilde{\solu}|^{p-2} \nabla \tilde{\solu}\cdot \nabla (\Sigma(\tilde{u}))\ \varphi\\
 &\quad+ \int_{{B(x_0,R)}} |\nabla \solv|^{p-2} \nabla \solv \cdot \nabla m\ \varphi,
\end{split}
 \]
where 
\[
 m(x) = \begin{cases}
         1 \quad &\mbox{in {$B^+(x_0,R)$}}\\
         |\tilde{u}(x)|^{2(p-2)} \quad &\mbox{in {$B(x_0,R)\setminus \R^n_+$}}.\\
        \end{cases}
\]
Observe that $m(x)$ and $m(x)^{-1} \in L^\infty\cap W^{1,p}(B(x_0,R))$. Now \eqref{eq:reflectedeq} follows from \eqref{eq:nablaveqnablau}. 

It remains to establish the growth estimates from Proposition~\ref{pr:gensysreg} which follow from Proposition~\ref{pr:growth}. Indeed, set $\mathcal{M} := B(x_0,R) \cap \partial \R^n_+$. 

{To obtain \eqref{eq:growthcond} let $B(y_0,4r)\subset B(x_0,R)$ and $B(y_0,2r)\cap\mathcal{M} {=} \emptyset $. Let us consider first $B(y_0, 2r)\subset \R^n_-$. Then we observe that by \eqref{eq:nablaveqnablau} combined with the fact that $|u|>\frac12$ on $B^+(x_0,R)$ we have $\int_{B(y_0,r)}|\nabla v|^p \aleq \int_{B(\tilde{y}_0,r)} |\nabla u|^p$,
where $\tilde{y}_0$ is the point $y_0=(y_0^1,\ldots,y_0^n)$ reflected along the hyperplane $\partial \R^n_+$, i.e., $\tilde{y}_0=(y_0^1,\ldots,-y_0^n)$. Now applying \eqref{eq:growth:pllnbd0} to $u$, we obtain
\begin{equation}\label{eq:growthestimatesv}
\begin{split}
 \int_{B(y_0,r)}|\nabla v|^p &\aleq C \lambda\int_{B^+(\tilde{y}_0,4r)}|\nabla u|^p + C\lambda^{1-p}r^{-p}\int_{B^+(\tilde{y}_0,4r)}|u-(u)_{B^+(\tilde{y}_0,4r)}|^p\\
 &\le  C \lambda\int_{B(y_0,4r)}|\nabla v|^p + C\lambda^{1-p}r^{-p}\int_{B^-(y_0,4r)}\left|\tilde{u}-(\tilde{u})_{B^-({y}_0,4r)}\right|^p.
\end{split}
 \end{equation}
To estimate the remaining part we note that since $v=\frac{\tilde{u}}{|\tilde{u}|^2}$ we have $\tilde{u} = \frac{v}{|v|^2}$ in $\R^n_{-}$ { and for any $A\subset B(x_0,R)\setminus \R^n_+$: }

\begin{equation}\label{eq:growthestimateforv1}
 \begin{split}
  \mvint_A \left|\frac{v}{|v|^2} -\brac{\frac{v}{|v|^2}}_A \right|^p & {\aleq }\mvint_A \mvint_A \left|\frac{v(x)}{|v(x)|^2} - \frac{v(y)}{|v(x)|^2}\right|^p + \mvint_A \mvint_A \left|\frac{v(y)}{|v(x)|^2} - \frac{v(y)}{|v(y)|^2}\right|^p \\
  & \aleq \|v^{-1}\|^{2p}_{L^{\infty}} \mvint_A \mvint_A |v(x)-v(y)|^p + \|v^{-1}\|^{{3p}}_{L^\infty}\mvint_A \mvint_A \left|{|v(x)|^2-|v(y)|^2}\right|^p.
 \end{split}
\end{equation}
Now, since for any $a,\, b $,
\[
 |a|^2 - |b|^2 = (|a|+|b|)(|a| - |b|) \le (|a|+|b|)|a-b|
\]
we have 
\begin{equation}\label{eq:growthestimateforv2}
\begin{split}
 \mvint_A \mvint_A \left|{|v(x)|^2-|v(y)|^2}\right|^p &\aleq \|v\|^{{p}}_{L^\infty(A)}\mvint_A \mvint_A |v(x) - v(y)|^p\\
 &\aleq \|v\|^{{p}}_{L^\infty(A)}\mvint_A |v-(v)_A|^p,
 \end{split}
\end{equation}
where the last inequality was obtained by adding and subtracting $(v)_A$ and by the triangle inequality. We deduce from \eqref{eq:growthestimateforv1} and \eqref{eq:growthestimateforv2} that
\[
  \mvint_A \left|\frac{v}{|v|^2} -\brac{\frac{v}{|v|^2}}_A \right|^p \aleq \|v^{-1}\|^{2p}_{L^\infty(A)}(1+{\|v\|^{p}_{L^\infty(A)}\|v^{-1}\|^{p}_{L^\infty(A)}})\mvint_A |v-(v)_A|^p.
\]
Due to the fact that $|u|>\frac12$ and $u$ is uniformly bounded we get
\begin{equation}\label{eq:growthestimatesvmiddle}
  \mvint_A |\tilde{u} -(\tilde{u})_A |^p \aleq \mvint_A |v - (v)_A|^{p} \quad \text{for any } A\subset B(x_0,R)\setminus\R^n_{+}.
\end{equation}}
To conclude, we note\footnote{\label{ft:AB} {Indeed, for any $\tilde{A}\subset A$ we have by enlarging the domain of integration and applying Jensen's inequality
\[                                                                                                                                                                                                                                                                                                                                                                                                                                                                                                                                                                      
 \mvint_{\tilde{A}} |w-(w)_{\tilde{A}}|^p \aleq \frac{|A|}{|\tilde{A}|} \mvint_{{A}} |w-(w)_{{A}}|^p.
 \]
}} {that since $B(y_0,2r) \subset \R^n_-$ we have $\frac{|B(y_0,4r)|}{|B^-(y_0,4r)|}\aeq 1$, thus}
\begin{equation}\label{eq:growthestimatesvfinal}
 \mvint_{{B^-(y_0,4r)}} |v-(v)_{{B^-(y_0,4r)}}|^p \aleq  \mvint_{B(y_0,4r)} |v-(v)_{B(y_0,4r)}|^p.
\end{equation}
Combining estimates \eqref{eq:growthestimatesv}, \eqref{eq:growthestimatesvmiddle}, and \eqref{eq:growthestimatesvfinal} we obtain \eqref{eq:growthcond}.
The second case $B(y_0,2r)\subset\R^n_+$ is easier and we leave it to the reader.

Finally, for \eqref{eq:growthcondinter} we {apply \eqref{eq:growth:pllnbd} and} observe that $|u|^2 \equiv 1$ on $\mathcal{I} := B(y_0,4r) \cap \partial \R^n_+$. Thus,
\[
 \int_{B^+(y_0,4r)} \big ||\solu|^2-1 \big |^p \aleq \brac{\|u\|_{L^\infty}+1} \int_{B^+(y_0,4r)} \big ||\solu|-(|u|)_{\mathcal{I}} \big|^p.
\]
Now
\[
 \big ||\solu(z)|-(|u|)_{\mathcal{I}} \big | \leq \mvint_{\mathcal{I}} \big | |\solu(z)|-|u(z_2)| \big |\ dz_2 \leq \mvint_{\mathcal{I}} \big | \solu(z)- u(z_2) \big |\ dz_2
\]
and thus
\[
 \mvint_{B^+(y_0,4r)} \big ||\solu|-(|u|)_{\mathcal{I}} \big |^p \aleq \mvint_{B^+(y_0,4r)} \big |\solu -(u)_{\mathcal{I}} \big |^p + \mvint_{\mathcal{I}} \big |\solu -(u)_{\mathcal{I}}\big |^p.
\]
Proposition~\ref{pr:plapnup} is now established.
\end{proof}

\begin{proof}[Proof of Theorem~\ref{th:main}]
For $p=n$ H\"older continuity for $u$ follows from Proposition~\ref{pr:main:hoelderpn}. For $p<n$ it follows from the combination of Proposition~\ref{pr:plapnup} and Proposition~\ref{pr:gensysreg}. Now $C^{1,\alpha}$-regularity follows from the reflection, Proposition~\ref{pr:plapnup}, and the fact that a H\"older continuous solution to the reflected system is $C^{1,\alpha}$ for some $\alpha > 0$, see \cite[Theorem 3.1.]{Hardt-Lin-1987} (which is stated for minimizers but actually only uses the continuity of the solution and the equation). See also \cite[Theorem 1.2.]{Riviere-Strzelecki-2005}.

Note that for $p=n$ there is also a more elegant argument to pass from $C^\alpha$ regularity to $C^{1,\alpha}$. Testing the equation \eqref{eq:soluspheredef} in $x$ and $x+h$ with $\varphi(x) := \eta(x) (v(x+h)-v(x))$ for a suitable cutoff function $\eta$ one obtains from the H\"older continuity of $u$ that for some $\sigma > 0$ we have $\nabla v \in W^{1+\sigma,n}$. In particular, by Sobolev embedding $\nabla v \in L^{(n,1)}_{loc}$, and from Duzaar-Mingione's work \cite{Duzaar-Mingione-2010} we get a Lipschitz bound for $v$. Now, $C^{1,\alpha}$-regularity is a consequence of the potential estimates for $p$-Laplace equations, see \cite{Kuusi-Mingione-2012,Kuusi-Mingione-2017}. We leave the details to the reader.
\end{proof}

\section{Partial Regularity: Proof of Theorem \ref{th:partialregularity}}\label{s:partialregularity}
For simplicity we assume {in this section that $B^+(0,R)\subset D \subset \R^n_+$ and $\partial D \cap B(0,R)= \partial \R^n_+\cap B(0,R)$}. We begin with recalling that a map $u\in W^{1,p}({B^+(0,R)},\R^N)$ is said to be \emph{stationary $p$-harmonic} with respect to the free boundary condition $u({\partial D \cap B(0,R)})\subset\S^{N-1}$ if in addition to \eqref{eq:soludef} it is a critical point of the energy with respect to variations in the domain. The latter is equivalent to $u$ satisfying
\begin{equation}\label{eq:firstvariation}
 \int_{{B^+(0,R)}}|\nabla u|^{p-2}\brac{|\nabla u|^2 \delta_{ij} - p\,\partial_i u \,\partial_j u}\partial_i\xi^j  =0
\end{equation}
for $\xi = (\xi^1,\ldots,\xi^n)\in C_c^\infty({\overline{\R^n_+}\cap B(0,R)},\R^n)$ with $\xi(\partial \R^n_+)\subset\partial\R^n_+$.  

By choosing the test function as $\xi(x):=\psi(x)(x_0-x)$ in \eqref{eq:firstvariation}, where $\psi\in C^\infty_c({\overline{\R^n_+}\cap B(0,R)},[0,1])$ is a suitable bump function, one obtains the following.

\begin{lemma}[monotonicity formula]\label{lem:monotonicityformula}
Let $u\in W^{1,p}(B^+(0,R),\R^N)$ be a stationary $p$-harmonic map with respect to the free boundary condition $u( B^+(0,R)\cap\{x_n=0\})\subset\S^{N-1}$ and let $x_0 \in B^+(0,R)\cap \{x_n=0\}$. Then, the normalized $p$-energy is monotone. In particular,
\begin{equation}\label{eq:mono}
 r^{p-n}\int_{B^+(x_0,r)}|\nabla u|^p -\rho^{p-n}\int_{B^+(x_0,\rho)}|\nabla u|^p = p\int_{B^+(x_0,r)\setminus B^+(x_0,\rho)}|x-x_0|^{p-n} {|\nabla u|^{p-2}}\, \left|\frac{\partial u}{\partial \nu}\right|^2
\end{equation}
for all $0<\rho<r<R-|x_0|$, where $\nu$ is the outward pointing unit normal for $\partial B(x_0,r)$, $\nu(x):=\frac{x-x_0}{|x-x_0|}$. For $x_0\in B^+(0,R)\setminus\partial\R^n_+$ the same holds if $r$ is such that $B^+(x_0,r) = B(x_0,r)\subset\R^n_+$.
\end{lemma}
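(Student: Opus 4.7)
The plan is to test the stationarity identity \eqref{eq:firstvariation} with a vector field built from a radial cutoff based at $x_0$. The reason the result is true only for $x_0 \in B^+(0,R)\cap\{x_n=0\}$ (or strictly interior $x_0$) is that the admissible $\xi$ must be tangent to $\partial \R^n_+$, and if $x_0^n = 0$ then the radial field $x \mapsto (x - x_0)$ has $n$-th component $x_n$, which vanishes on the free boundary for free. More precisely, fix a smooth non-increasing cutoff $\eta_\epsilon\colon [0,\infty)\to[0,1]$ approximating $\chi_{[0,r]}$ and set
\[
 \xi(x) := \eta_\epsilon(|x-x_0|)\,(x-x_0), \qquad \text{so}\qquad \partial_i\xi^j = \eta_\epsilon(|x-x_0|)\,\delta_{ij} + \eta_\epsilon'(|x-x_0|)\,\frac{(x-x_0)^i(x-x_0)^j}{|x-x_0|}.
\]

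Contracting $\partial_i \xi^j$ with the $p$-stress tensor $T_{ij} := |\nabla u|^{p-2}\bigl(|\nabla u|^2\delta_{ij} - p\,\partial_i u\,\partial_j u\bigr)$, the identity part contributes $(n-p)\eta_\epsilon|\nabla u|^p$, while the rank-one part contributes $\eta_\epsilon'(|x-x_0|)\,|x-x_0|\,\bigl(|\nabla u|^p - p|\nabla u|^{p-2}|\partial_\nu u|^2\bigr)$, where $\partial_\nu u$ is the radial derivative. Substituting into \eqref{eq:firstvariation} and passing to the limit $\eta_\epsilon \to \chi_{[0,r]}$ (so $\eta_\epsilon' \to -\delta_{\{\,|x-x_0|=r\,\}}$) via the coarea formula yields, for a.e.\ $r \in (0, R-|x_0|)$,
\[
 (n-p)\int_{B^+(x_0,r)}|\nabla u|^p \;=\; r\int_{\partial B(x_0,r)\cap\R^n_+}\Bigl(|\nabla u|^p - p|\nabla u|^{p-2}|\partial_\nu u|^2\Bigr)\,d\mathcal{H}^{n-1}.
\]

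Next I would differentiate $E(r):=r^{p-n}\int_{B^+(x_0,r)}|\nabla u|^p$ in $r$: the boundary term from the fundamental theorem of calculus, combined with the tangential boundary contribution identified above, cancels the $(p-n) r^{p-n-1}$ volume term and leaves the clean identity
\[
 \frac{d}{dr}E(r) \;=\; p\,r^{p-n}\int_{\partial B(x_0,r)\cap\R^n_+}|\nabla u|^{p-2}|\partial_\nu u|^2\,d\mathcal{H}^{n-1}.
\]
Integrating from $\rho$ to $r$ and rewriting the right-hand side with polar coordinates gives \eqref{eq:mono}. The interior case $x_0 \in \R^n_+$ with $B(x_0,r) \subset \R^n_+$ is identical (and classical), since the tangency constraint on $\xi$ is vacuous there.

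The only real obstacle is the low regularity: one cannot a priori plug in $\xi$ having only the Lipschitz regularity coming from the radial factor, but using the smooth family $\eta_\epsilon$ and then passing to the limit via dominated convergence and the coarea formula is standard, and gives the identity for a.e.\ $r$; monotonicity then extends to all $r$ by lower semicontinuity of $E$ in $r$ (or one simply observes that $E$ is absolutely continuous as an integral of an $L^1$ density). Importantly, the tangency $\xi(\partial\R^n_+) \subset \partial\R^n_+$ is built in by the choice $x_0 \in \partial\R^n_+$, so no contribution from the free boundary appears, and the nonnegativity of the right-hand side of the derivative identity gives monotonicity of $E$ for free.
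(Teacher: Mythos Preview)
Your argument is correct and is exactly the approach the paper indicates: right before the statement the authors write ``By choosing the test function as $\xi(x):=\psi(x)(x_0-x)$ in \eqref{eq:firstvariation}\ldots'', and then give no further details beyond citing Price, Fuchs, and Scheven. Your computation of $T_{ij}\,\partial_i\xi^j$, the passage to the limit via coarea, the differentiation of $E(r)$, and the observation that $x_0\in\partial\R^n_+$ forces $\xi^n\big|_{\{x_n=0\}}=0$ (so the admissibility constraint $\xi(\partial\R^n_+)\subset\partial\R^n_+$ is automatic) are all standard and match the cited literature.
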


This well-known fact was proved for Yang--Mills fields and stationary harmonic maps by Price \cite{Price}, see \cite{Evans1991,Bethuel1993} and also \cite[Section 2.4]{Simon}. Fuchs \cite{Fuchs} observed that \eqref{eq:mono} holds for stationary $p$-harmonic maps. As pointed out by Scheven \cite[p.137]{Scheven-2006} the proof holds true in the case of free boundary condition.

We will need the following lemma (see, e.g., \cite[Corollary 3.2.3.]{Ziemer}).
\begin{lemma}[Frostman's lemma]\label{le:frostman}
If $f\in L^{p}(\R^n)$, $p\ge 1$, and $0\le\alpha<n$, then for 
\[
 E := \left\{x\in \R^n:\limsup_{r\rightarrow 0} r^{-\alpha}\int_{B(x,r)}|f(y)|^p >0 \right\},
\]
we have $\H^{\alpha}(E)=0$.
\end{lemma}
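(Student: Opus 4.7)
The plan is a standard Vitali-covering argument combined with absolute continuity of the Lebesgue integral. The natural starting point is to reduce to showing $\H^\alpha(E_k) = 0$ for each fixed $k \in \N$, where
\[
E_k := \left\{x \in \R^n : \limsup_{r \to 0} r^{-\alpha} \int_{B(x,r)} |f(y)|^p\, dy > \tfrac{1}{k}\right\},
\]
since $E = \bigcup_k E_k$ and Hausdorff measure is countably subadditive.

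Next I would fix $k$ and $\delta > 0$. For each $x \in E_k$, the definition of $E_k$ yields some radius $r_x \in (0,\delta)$ with $\int_{B(x,r_x)} |f|^p > r_x^\alpha/k$. The family $\{B(x,r_x)\}_{x \in E_k}$ covers $E_k$, and Vitali's $5r$-covering lemma produces a countable \emph{disjoint} subcollection $\{B(x_i, r_i)\}_i$ whose $5$-dilations still cover, $E_k \subset \bigcup_i B(x_i, 5 r_i)$. Summing and using disjointness gives
\[
\sum_i r_i^\alpha \leq k \sum_i \int_{B(x_i,r_i)} |f|^p = k \int_{\bigcup_i B(x_i,r_i)} |f|^p \leq k\, \|f\|_{L^p(\R^n)}^p.
\]
By itself this only yields $\H^\alpha_{5\delta}(E_k) \leq 5^\alpha k \|f\|_{L^p}^p$, a uniform \emph{finite} bound but not zero.

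The main obstacle is thus to drive the right-hand side above to zero as $\delta \to 0$, and this is where the strict inequality $\alpha < n$ enters. Since each $r_i < \delta$, the elementary estimate $r_i^n \leq \delta^{n-\alpha}\, r_i^\alpha$ gives
\[
\left| \bigcup_i B(x_i, r_i) \right| \leq C_n \sum_i r_i^n \leq C_n\, \delta^{n-\alpha} \sum_i r_i^\alpha \leq C_n\, \delta^{n-\alpha}\, k\, \|f\|_{L^p}^p,
\]
which tends to $0$ as $\delta \to 0$. Absolute continuity of the integral applied to $|f|^p \in L^1(\R^n)$ then forces $\int_{\bigcup_i B(x_i,r_i)} |f|^p \to 0$, hence $\sum_i r_i^\alpha \to 0$, and therefore $\H^\alpha_{5\delta}(E_k) \leq 5^\alpha \sum_i r_i^\alpha \to 0$ as $\delta \to 0$. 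This yields $\H^\alpha(E_k) = 0$, and summing over $k$ completes the proof.
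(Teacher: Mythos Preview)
The paper does not prove this lemma; it merely cites \cite[Corollary 3.2.3.]{Ziemer}. Your argument is correct and is the standard one found in that reference (and in Evans--Gariepy): the Vitali $5r$-cover together with the two-step bootstrap --- first the crude bound $\sum_i r_i^\alpha \le k\|f\|_{L^p}^p$, then the strict inequality $\alpha<n$ to force $|\bigcup_i B(x_i,r_i)|\to 0$, and finally absolute continuity of $\int |f|^p$ to upgrade the crude bound to $\sum_i r_i^\alpha \to 0$ --- is exactly the classical route.
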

 We shall show, using monotonicity formula \eqref{eq:mono} and Frostman's Lemma \ref{le:frostman}, that the set outside which the condition \eqref{eq:thmain:smallnesscond} is satisfied is of zero $(n-p)$-Hausdorff measure. We then obtain Theorem \ref{th:partialregularity} from Theorem \ref{th:main}.
\begin{proof}[Proof of Theorem~\ref{th:partialregularity}]
Let 
 \begin{equation*}
  S := \left\{ x\in \overline{\R^n_+} : \limsup_{r\rightarrow 0} r^{p-n}\int_{B^+(x,r)}|\nabla u|^p >0 \right\}, 
 \end{equation*}
by Lemma \ref{le:frostman}, we have $\H^{n-p}(S)=0$.
 
We define for $\epsilon$ as in Theorem \ref{th:main}
\[
  \Sigma_{\epsilon} : = \left\{x \in \overline{\R^n_+} : \forall R>0\sup_{|y_0 - x|<R}\sup_{\rho<R}\rho^{p-n}\int_{B^+(y_0,\rho)}|\nabla u|^p \ge \epsilon \right\},
 \]
 clearly $\Sigma_{\epsilon}$ is a closed set. We will prove that $\H^{n-p}(\Sigma_\epsilon)=0$. Then Theorem \ref{th:partialregularity} is a consequence of Theorem \ref{th:main}. 
 
Let $A_\epsilon$ be the set on which the condition \eqref{eq:thmain:smallnesscond} is satisfied for $\epsilon$, i.e., 
\[
 A_{\epsilon} := \overline{\R^n_+}\setminus\Sigma_\epsilon = \left\{x\in \overline{\R^n_+}: \exists R>0 \mbox{ such that } \sup_{|y_0 - x|<R}\sup_{\rho<R}\rho^{p-n}\int_{B^+(y_0,\rho)}|\nabla u|^p <\epsilon \right\}.
\]
In order to prove the theorem it suffices to show that $\brac{\overline{\R^n_+}\setminus S}\subseteq A_\epsilon$. 
 
{\phantom{Let $\epsilon>0$ be as in Theorem \ref{th:main} and }}Let $x_0 \in \brac{\overline{\R^n_+}\setminus S}$, i.e., be such that $\limsup_{r\rightarrow 0} r^{p-n}\int_{B^+(x_0,r)}|\nabla u|^p =0$. There exists an $R>0$ such that 
\[ R^{p-n}\int_{B^+(x_0,R)}|\nabla u|^p<4^{p-n}\epsilon.\] 
We shall show that 
\[
\sup_{|y_0 - x_0|<R/4}\sup_{\rho\, < R/4} \rho^{p-n}\int_{B^+(y_0, \rho)}|\nabla u|^p <  \epsilon.
\]
Choose any $y_0$ such that $|y_0-x_0|<R/4$ and any radius $\rho<R/4$. First observe that we may take $y_0\in \overline{\R^n_+}$. {Indeed, suppose that $y_1\in B(x_0,R/4)\cap\R^n_-$, then for any $\rho<R/4$ we can choose $y_0\in B(x_0,R/4)\cap\overline{\R^n_+}$ such that $B(y_1,\rho)\cap\overline{\R^n_+}\subset B(y_0,\rho)\cap \overline{\R^n_+}$ thus}
\[
\sup_{|y_1 - x_0|<R/4}\sup_{\rho\, < R/4} \rho^{p-n}\int_{B^+(y_1, \rho)}|\nabla u|^p = \sup_{y_0\in B(x_0,R/4)\cap\overline{\R^n_+}}\sup_{\rho\, < R/4} \rho^{p-n}\int_{B^+(y_0, \rho)}|\nabla u|^p.
\]
Now assume that $y_0\in\partial\R^n_+$. We have $B^+(y_0,\rho)\subset B^+(y_0,R/4)\subset B^+(x_0,R)$. Thus
\[
 \rho^{p-n}\int_{B^+(y_0,\rho)}|\nabla u|^p \le \brac{\frac R4}^{p-n}\int_{B^+(y_0,R/4)}|\nabla u|^p \le 4^{n-p} R^{p-n}\int_{B^+(x_0,R)}|\nabla u|^p < \epsilon,
\]
where the first inequality is a consequence of the monotonicity formula \eqref{eq:mono}. 

Now, let us assume that $y_0\notin\partial\R^n_+$. Let $\overline{\rho} = \dist(y_0,\partial\R^n_+)$ and $\overline{y}_0$ be the projection of $y_0$ onto $\partial\R^n_+$. We can assume that $\rho<\overline{\rho}$. Indeed, if not we would have 
\begin{align*}
 \rho^{p-n}\int_{B^+(y_0,\rho)}|\nabla u|^p &\le \rho^{p-n}\int_{B^+(\overline{y}_0,2\rho)}|\nabla u|^p = 2^{n-p}(2\rho)^{p-n}\int_{B^+(\overline{y}_0,2\rho)}|\nabla u|^p\\
 &\le 2^{n-p}\brac{\frac R2}^{p-n}\int_{B^+(\overline{y}_0,R/2)}|\nabla u|^p\le 4^{n-p}R^{p-n}\int_{B^+(x_0,R)}|\nabla u|^p < \epsilon.
\end{align*}
Next, we note that $\overline{\rho}<R/4$ and
observe the following inclusions 
\[
B(y_0,\rho)\subset B(y_0,\overline{\rho})\subset B^+(\overline{y}_0,2\overline{\rho})\subset B^+(\overline{y}_0,R/2)\subset B^+(x_0,R)\] 
and the following inequalities which are consequences of the monotonicity formula \eqref{eq:mono}:
\begin{align*}
 \rho^{p-n}\int_{B(y_0,\rho)} |\nabla u|^p &\le \brac{\overline{\rho}}^{p-n}\int_{B(y_0,\overline{\rho})} |\nabla u|^p,\\
 (2\overline{\rho})^{p-n}\int_{B^+(\overline{y}_0,2\overline{\rho})}|\nabla u|^p &\le \brac{\frac R2}^{p-n}\int_{B^+(\overline{y}_0,R/2)} |\nabla u|^p. 
\end{align*}
Thus
\begin{align*}
 \rho^{p-n}\int_{B(y_0,\rho)} |\nabla u|^p &\le \brac{\overline{\rho}}^{p-n}\int_{B(y_0,\overline{\rho})} |\nabla u|^p \le 2^{n-p} (2\overline{\rho})^{p-n}\int_{B^+(\overline{y}_0,2\overline{\rho})}|\nabla u|^p\\ 
 &\le 2^{n-p} \brac{\frac R2}^{p-n}\int_{B^+(\overline{y}_0,R/2)}|\nabla u|^p\le 4^{n-p} R^{p-n}\int_{B^+(x_0,R)}|\nabla u|^p < \epsilon,
\end{align*}
which gives $x_0\in A_\epsilon$.

We conclude $\Sigma_\epsilon\subset S$ and thus $\H^{n-p}(\Sigma_\epsilon)=0$.
\end{proof}

\subsection{A Liouville type result}
{We note that the monotonicity formula in Lemma~\ref{lem:monotonicityformula} can be used to prove partial regularity but also Liouville type results in the spirit of \cite{Liu_2010}. Indeed, if we work in $\R^n_+$, for $u\in \dot{W}^{1,p}(\R^n_+,\R^N)$ we can say that $u$ is stationary $p$-harmonic with respect to the free boundary condition $u(\p \R^n_+)\subset \mathbb{S}^{N-1}$ if $u$ satisfies \eqref{eq:soludef} and \begin{equation}\label{eq:firstvariation2}
 \int_{\R^n_+}|\nabla u|^{p-2}\brac{|\nabla u|^2 \delta_{ij} - p\,\partial_i u \,\partial_j u}\partial_i\xi^j  =0
\end{equation}
for $\xi = (\xi^1,\ldots,\xi^n)\in C_c^\infty(\overline{\R^n_+},\R^n)$ with $\xi(\partial \R^n_+)\subset\partial\R^n_+$. We then have

\begin{proposition}\label{pr:Liouvilletype}
Let $2\le p<n$ and $u\in \dot{W}^{1,p}(\R^n_+,\R^N)$ be such that $u$ is a finite energy, stationary $p$-harmonic map with respect to the free boundary condition $u(\p \R^n_+)\subset \mathbb{S}^{N-1}$, then $u$ is constant.
\end{proposition}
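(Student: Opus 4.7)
The plan is to use the monotonicity formula of Lemma~\ref{lem:monotonicityformula} together with the finite energy assumption and the strict inequality $p<n$ in order to force the normalized $p$-energy to vanish on every half-ball. First I would verify that the monotonicity identity \eqref{eq:mono} extends to the unbounded setting: since the stationarity equation \eqref{eq:firstvariation2} is tested against $\xi\in C_c^\infty(\overline{\R^n_+},\R^n)$ with $\xi(\partial\R^n_+)\subset\partial\R^n_+$, the classical derivation (test $\xi(x)=\psi(x)(x_0-x)$ with a compactly supported radial cutoff $\psi$ centered at a boundary point $x_0\in\partial\R^n_+$, then let $\psi$ approximate $\chi_{B(x_0,r)}-\chi_{B(x_0,\rho)}$) goes through verbatim, yielding
\[
r^{p-n}\int_{B^+(x_0,r)}|\nabla u|^p-\rho^{p-n}\int_{B^+(x_0,\rho)}|\nabla u|^p = p\int_{B^+(x_0,r)\setminus B^+(x_0,\rho)}|x-x_0|^{p-n}|\nabla u|^{p-2}\left|\frac{\partial u}{\partial\nu}\right|^2
\]
for every $0<\rho<r<\infty$ and every $x_0\in\partial\R^n_+$.

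Next, fix an arbitrary boundary point $x_0\in\partial\R^n_+$ and let $r\to\infty$ with $\rho>0$ fixed. The right-hand side above is non-negative, so the map $r\mapsto r^{p-n}\int_{B^+(x_0,r)}|\nabla u|^p$ is non-decreasing. On the other hand, the finite energy assumption gives
\[
r^{p-n}\int_{B^+(x_0,r)}|\nabla u|^p\le r^{p-n}\|\nabla u\|_{L^p(\R^n_+)}^p\xrightarrow[r\to\infty]{}0,
\]
because $p<n$. Combining monotonicity with this decay forces $\rho^{p-n}\int_{B^+(x_0,\rho)}|\nabla u|^p\le 0$, hence $\int_{B^+(x_0,\rho)}|\nabla u|^p=0$ for every $\rho>0$ and every $x_0\in\partial\R^n_+$.

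Finally, since every point $y\in\R^n_+$ is contained in some half-ball $B^+(x_0,\rho)$ with $x_0\in\partial\R^n_+$ (take $x_0$ to be the orthogonal projection of $y$ onto $\partial\R^n_+$ and any $\rho>y_n$), the previous conclusion yields $\nabla u\equiv 0$ almost everywhere in $\R^n_+$, so $u$ is constant.

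The main (mild) obstacle I anticipate is the justification that the monotonicity identity truly holds on the unbounded half-space rather than on $B^+(0,R)$ as stated in Lemma~\ref{lem:monotonicityformula}; this is a standard density/cutoff argument based on \eqref{eq:firstvariation2}, but it needs to be made explicit so that the choice of compactly supported tangential vector field $\xi$ is admissible and the boundary contributions from the free boundary condition vanish. Everything else is a direct consequence of finite energy combined with the sign condition $p<n$.
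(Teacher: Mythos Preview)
Your proof is correct and follows essentially the same approach as the paper: both combine the monotonicity of $r\mapsto r^{p-n}\int_{B^+(x_0,r)}|\nabla u|^p$ with the finite-energy assumption and the strict inequality $p<n$ to force $\nabla u\equiv 0$. The paper phrases this as a contradiction (if $\int_{B^+(0,R_0)}|\nabla u|^p\ge c>0$ then monotonicity gives $\int_{B^+(0,R)}|\nabla u|^p\ge (R/R_0)^{n-p}c\to\infty$), whereas you argue directly that the monotone quantity is bounded above by $r^{p-n}\|\nabla u\|_{L^p}^p\to 0$; these are the same argument in contrapositive form.
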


\begin{proof}
By contradiction, assume that $u$ is not a constant. Then there exists $R_0>0$ such that $\int_{B^+(0,R_0)}|\nabla u|^p \geq c>0$. Now by the monotonicity formula \ref{lem:monotonicityformula} we have that for any $R>R_0$
\begin{equation}
\int_{B^+(0,R)} |\nabla u|^p\geq \left(\frac{R}{R_0}\right)^{n-p} \int_{B^+(0,R_0)}|\nabla u|^p\geq \left(\frac{R}{R_0}\right)^{n-p} c.
\end{equation}
We can then let $R$ go to $+\infty$ and we obtain that the $p$-energy of $u$ in $\R^n_+$ is infinite. This is a contradiction since we assumed that $u\in \dot{W}^{1,p}(\R^n_+,\R^N)$.
\end{proof}

}
\subsection*{Acknowledgments}
A.S. and K.M. are supported by the German Research Foundation (DFG) through grant no.~SCHI-1257-3-1. A.S. received research funding from the Daimler and Benz foundation no. 32-11/16, and Simons foundation through grant no 579261 is gratefully acknowledged. A.S. was Heisenberg fellow. R.R. was supported by the Millennium Nucleus Center for Analysis of PDE NC130017 of the Chilean Ministry of Economy and by the F.R.S.-FNRS under the ``Mandat d'Impulsion scientifique F.4523.17, Topological singularities of Sobolev maps". {We thank M. Willem for indicating us the proof of Proposition \ref{th:boundedness-in-unbounded}. The authors would like to thank the anonymous referees for their helpful suggestions.}

\appendix
\section{On boundedness of \texorpdfstring{$p$}{p}-harmonic maps}
The following lemma is well-known. However, we could not find it explicitly in the literature, so we state it here for the convenience of the reader. 
\begin{lemma}\label{la:boundedness}
Let $D \subset \R^n$ be a smooth, bounded domain. Assume that $\solu \in W^{1,p}(D,\R^N)$ is a solution to 
\[
\dv (|\nabla \solu|^{p-2} \nabla \solu) = 0 \quad \mbox{in $D$}.
\]
If $\solu \Big |_{\partial D} \in L^\infty({\partial} D)$, then $\|\solu\|_{L^{\infty}(D)} \leq  \|\solu\|_{L^{\infty}(\partial D)}$.
\end{lemma}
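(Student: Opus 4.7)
The plan is to use a Moser-type test-function argument adapted to the vector-valued setting. Set $M := \|u\|_{L^\infty(\partial D)}$ (which is finite by hypothesis) and let
\[
\psi(t) := \min\bigl((t-M^2)_+, 1\bigr),
\]
so that $\psi \in W^{1,\infty}(\R)$ vanishes on $(-\infty, M^2]$, is strictly positive on $(M^2, \infty)$, and has derivative equal to $1$ on $(M^2, M^2+1)$ and $0$ elsewhere. I would then test the interior equation against
\[
\phi^i := \psi(|u|^2)\, u^i, \qquad i = 1, \dots, N.
\]

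The first step is to check admissibility, i.e.\ $\phi \in W_0^{1,p}(D,\R^N)$. The vanishing of the trace is immediate since $\psi(|u|^2) \equiv 0$ on $\{|u|\le M\}$ and in particular on $\partial D$. The key observation for the $W^{1,p}$-regularity is that although $u$ is a priori unbounded, on the set $\{\psi'(|u|^2)\ne 0\}$ one has $|u|^2 \le M^2+1$, so the factor $u^i\psi'(|u|^2)$ appearing in $\nabla\phi$ is uniformly bounded and hence $\nabla\phi \in L^p$. Plugging $\phi$ into $\int_D |\nabla u|^{p-2}\partial_\alpha u^i\,\partial_\alpha\phi^i = 0$, expanding via the chain rule, summing over $i$, and using $u^k\partial_\alpha u^k = \tfrac12 \partial_\alpha|u|^2$, one obtains
\[
0 = \int_D |\nabla u|^{p-2}\left[\tfrac12 \psi'(|u|^2)\bigl|\nabla |u|^2\bigr|^2 + \psi(|u|^2)\,|\nabla u|^2\right] dx.
\]
Both integrands are pointwise non-negative, hence each vanishes a.e. Since $\psi(|u|^2)>0$ on $\{|u|>M\}$, the second term forces $\nabla u = 0$ a.e.\ on $\{|u|>M\}$.

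To conclude, note that $\nabla u = 0$ on $\{|u|>M\}$ implies $\nabla \psi(|u|^2) = 2\psi'(|u|^2)\,(u\cdot\nabla u) \equiv 0$ a.e.\ in $D$. Since $\psi(|u|^2) \in W_0^{1,p}(D)$, Poincar\'e's inequality applied on each connected component of the smooth bounded domain $D$ forces $\psi(|u|^2) \equiv 0$ a.e., that is $|u|\le M$ a.e. The only delicate point---and the main technical obstacle---is the admissibility of $\phi$: choosing $\psi$ to have both bounded range and $\psi'$ of compact support is precisely what rescues the integrability of $\nabla\phi$ in the absence of an a priori $L^\infty$-bound on $u$, since the naive choice $\phi = (|u|^2 - M^2)_+\, u$ cannot be guaranteed to lie in $W^{1,p}$.
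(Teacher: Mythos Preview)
Your proof is correct and takes a genuinely different route from the paper's argument.

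The paper proceeds by regularization: it introduces the uniformly elliptic approximation $\div\bigl((\eps+|\nabla u_\eps|^2)^{(p-2)/2}\nabla u_\eps\bigr)=0$ with the same Dirichlet data, notes that the smooth solutions $u_\eps$ satisfy $\div\bigl((\eps+|\nabla u_\eps|^2)^{(p-2)/2}\nabla |u_\eps|^2\bigr)\ge 0$, applies the scalar weak maximum principle to $|u_\eps|^2$, and then proves strong $W^{1,p}$-convergence $u_\eps\to u$ to pass the bound to the limit.

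Your argument, by contrast, works directly at the level of the weak formulation via the Stampacchia-type test function $\phi=\psi(|u|^2)u$. The key advantage is that it is entirely self-contained: no auxiliary PDE needs to be solved, no interior smoothness of approximants is invoked, and no convergence argument is required. The only non-trivial point is the admissibility $\phi\in W^{1,p}_0(D,\R^N)$, and your choice of $\psi$ with both $\psi$ and $t\mapsto \psi'(t)\,t$ bounded handles this cleanly; equivalently, one observes that $F(q):=\psi(|q|^2)q$ is globally Lipschitz on $\R^N$ with $F(0)=0$, so $F\circ u\in W^{1,p}_0$ by the standard Lipschitz chain rule. The paper's approach, on the other hand, is somewhat more robust in that it would adapt to related systems where a convenient test function is less obvious, at the cost of relying on solvability and regularity of the regularized problem.
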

\begin{proof}
For scalar functions this is a consequence of the weak maximum principle for the $p$-Laplacian, see \cite[Theorem 2.15.]{Lindqvist-2006}. However, here we work with a system. For $\eps \in (0,1)$ we find smooth solutions $\solu_\eps \in W^{1,p}\cap C^\infty(D,\R^N)$ of the uniformly elliptic system
\begin{equation}\label{eq:soleps}
\begin{cases}
\dv (\brac{\eps + |\nabla \solu_\eps|^2}^{\frac{p-2}{2}} \nabla \solu_\eps) = 0 \quad &\mbox{in $D$}\\
\solu_\eps = \solu \quad &\mbox{on $\partial D$}\\
\end{cases}
\end{equation}
The solution is smooth in the interior, and a direct computation shows that
\begin{equation}
\dv (\brac{\eps + |\nabla \solu_\eps|^2}^{\frac{p-2}{2}} \nabla |\solu_\eps|^2) \geq 0.
\end{equation}
Thus the weak maximum principle for scalar solutions of uniformly elliptic operators in divergence form implies
\begin{equation}\label{eq:soluepsinfty}
 \sup_{\eps \in (0,1)} \|\solu_\eps\|_{L^{\infty}(D)} \leq \|\solu\|_{L^{\infty}(\partial D)},
\end{equation}
Moreover, we can test \eqref{eq:soleps} with $\solu_\eps - \solu$, which is trivial on $\partial D$, and thus
\[
 \int_{D} |\nabla \solu_\eps|^{p} \leq \int_{D} \brac{\eps + |\nabla \solu_\eps|^2}^{\frac{p-2}{2}} |\nabla \solu_\eps|^2= \int_{D} \brac{\eps + |\nabla \solu_\eps|^2}^{\frac{p-2}{2}} \nabla \solu_\eps\ \cdot \nabla \solu,
\]
consequently, with Young's inequality,
\[
 \int_{D} |\nabla \solu_\eps|^{p} \leq \frac{1}{2} \int_{D} |\nabla \solu_\eps|^p + C\int_{D} |\nabla \solu|^p + C(|D|,p)
\]
Thus, $\solu_\eps$ is uniformly bounded in $W^{1,p}$,
\begin{equation}\label{eq:epsboundedness}
 \sup_{\eps \in (0,1)} \int_{D} |\nabla \solu_\eps|^{p} < \infty.
\end{equation}
On the other hand,
\[
 \int_{D} \brac{\brac{\eps + |\nabla \solu_\eps|^2}^{\frac{p-2}{2}} \nabla \solu_\eps - |\nabla \solu|^{p-2} \nabla \solu} \cdot (\nabla \solu_\eps- \nabla \solu) = 0.
\]
Applying then the well-known inequality
\[
 |a-b|^p \aleq \brac{|a|^{p-2} a - |b|^{p-2} b}(a-b),
\]
we find that as $\eps \to 0$,
\[
 \int_D |\nabla \solu - \nabla \solu_\eps|^{p} \aleq o(1) \int_{D} \brac{|\nabla \solu|^{p-1} + |\nabla \solu_\eps|^{p-1}} 
\]
Therefore, in view of \eqref{eq:epsboundedness} and the boundedness of $D$,
\[
 \solu_\eps \xrightarrow{\eps \to0} \solu \quad \mbox{in $W^{1,p}(D)$}.
\]
In particular, up to a subsequence, we have pointwise almost everywhere convergence, and from \eqref{eq:soluepsinfty} we have
\[
 \|\solu \|_{L^{\infty}(D)} \leq \|\solu \|_{L^{\infty}( \partial D)} .
\]
%
\end{proof}

{
\begin{lemma}\label{la:boundedness2}
Let $D \subset \R^n$ be a possibly unbounded domain with smooth boundary $\partial D$. Assume that $p > n-1$, $\solu \in \dot{W}^{1,p}(D,\R^N)$ is a solution to 
\[
\dv (|\nabla \solu|^{p-2} \nabla \solu) = 0 \quad \mbox{in $D$}.
\]
If $\solu \Big |_{\partial D} \in L^\infty({\partial} D)$, then for every compact set $K \subset \overline{D}$ we have 
\[
\|u\|_{L^\infty(K)} < \infty.
\]
\end{lemma}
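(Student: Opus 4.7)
The idea is to reduce the unbounded case to the bounded case of Lemma~\ref{la:boundedness} by carving out a small bounded subdomain on which the full boundary data of $u$ is under control. As a starting point, Uhlenbeck's interior $C^{1,\alpha}$ regularity \cite{Uhlenbeck-1977} for the $p$-Laplace system ensures that $u$ is locally bounded on compact subsets of $\operatorname{int}(D)$. Hence it suffices to establish boundedness near each $x_0 \in K \cap \partial D$; we will then cover $K \cap \partial D$ by finitely many such neighborhoods and combine with the interior boundedness.

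Fix $x_0 \in K \cap \partial D$ and choose $r_0 > 0$ so that $\overline{B(x_0, 2r_0)} \cap \overline{D}$ is compact and $B(x_0, 2r_0) \cap \partial D$ is a smooth piece of $\partial D$. On the bounded set $D'' := B(x_0, 2r_0) \cap D$, the function $u$ belongs to $W^{1,p}(D'', \R^N)$, since any $\dot{W}^{1,p}$-function on $D$ lies in $W^{1,p}_{\rm loc}(\overline{D})$. The heart of the argument is the following intermediate-radius selection: by the coarea formula,
\[
\int_{r_0}^{2r_0} \int_{\partial B(x_0, \rho) \cap D} \big(|u|^p + |\nabla u|^p\big) \, d\H^{n-1} \, d\rho < \infty,
\]
so we can pick $\rho \in (r_0, 2r_0)$ such that the trace of $u$ on the sphere-slice $\Gamma_\rho := \partial B(x_0, \rho) \cap D$ lies in $W^{1,p}(\Gamma_\rho, \R^N)$. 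Since $\Gamma_\rho$ is an $(n-1)$-dimensional smooth manifold with boundary and $p > n-1$, Morrey's embedding yields
\[
u\big|_{\Gamma_\rho} \in C^{0, 1 - (n-1)/p}(\Gamma_\rho) \subset L^\infty(\Gamma_\rho).
\]
This is the only place in the proof where the hypothesis $p > n-1$ is used.

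Setting $D' := B(x_0, \rho) \cap D$, the boundary of $D'$ splits as $\Gamma_\rho \cup (\partial D \cap \overline{B(x_0, \rho)})$; on the first piece $u$ is bounded by the Morrey step above, and on the second by the hypothesis on $u|_{\partial D}$. Thus $u \in L^\infty(\partial D')$, and applying (the proof of) Lemma~\ref{la:boundedness} on the bounded domain $D'$ yields $\|u\|_{L^\infty(D')} \leq \|u\|_{L^\infty(\partial D')} < \infty$. The only minor obstacle I anticipate is that $D'$ is Lipschitz rather than smooth (a corner appears where $\partial B(x_0, \rho)$ meets $\partial D$), whereas Lemma~\ref{la:boundedness} is stated for smooth domains. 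This is not a genuine difficulty: the regularization and weak maximum principle in that proof remain valid on bounded Lipschitz domains; alternatively, one may bypass regularization by applying the weak maximum principle directly to the scalar subsolution $w := \tfrac12 |u|^2$, which satisfies $\operatorname{div}(|\nabla u|^{p-2} \nabla w) = |\nabla u|^p \geq 0$ in $D'$, together with the test function $\varphi := \big(w - \tfrac12 \|u\|_{L^\infty(\partial D')}^2\big)^+ \in W^{1,p}_0(D')$.
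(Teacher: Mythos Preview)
Your proof is correct and follows essentially the same approach as the paper's: use Fubini/coarea to select a good slice on which $u$ restricts to a $W^{1,p}$ function of an $(n-1)$-dimensional manifold, invoke Morrey's embedding (this is where $p>n-1$ enters), and then apply the bounded-domain maximum principle of Lemma~\ref{la:boundedness}. Your version is somewhat more detailed---you split into interior and boundary points and address the Lipschitz-corner issue explicitly---whereas the paper simply asserts the existence of a single smooth bounded $\tilde D\supset K$ with the required trace property; but the underlying argument is the same.
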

\begin{proof}
For compact $K$ we find by Fubini's theorem a smooth, bounded domain $\tilde{D} \supset K$ such that 
\[
u \Big |_{\partial \tilde{D} \cap D} \in W^{1,p},
\]
Since $p > n-1$ we conclude that, by Morrey-Sobolev embedding, $u$ is continuous on $\partial \tilde{D} \cap D$, and in particular $u \in L^\infty(\partial \tilde{D})$. Now we can apply Lemma~\ref{la:boundedness} to $\tilde{D}$ to obtain the result.
\end{proof}}

We now prove a maximum principle analog of Lemma~\ref{la:boundedness} but for maps defined in the half-space $\R^n_+$. We work with maps with finite energy, i.e., we work with $\dot{W}^{1,p}(\R^n_+,\R^N):=\{v \in \mathcal{D}'(\R^n_+,\R^N); \nabla v \in L^p(\R^n_+,\R^N) \}$. We remark that a map in $ \dot{W}^{1,p}(\R^n_+,\R^N)$ is also in $L^p_{\text{loc}}(\R^n_+,\R^N)$ and hence has a trace on $\p \R^n_+:= \R^{n-1} \times \{0 \}$ which is well-defined.

{
\begin{proposition}\label{th:boundedness-in-unbounded}
Let $u\in \dot{W}^{1,p}(\R^n_+,\R^N)$ be a solution to 
\[
 \divv ( |\nabla u|^{p-2}\nabla u)=0 \quad \text{ in } \R^n_+ ,
\]
that is
\[
\int_{\R^n_+} |\nabla u|^{p-2} \nabla u \cdot \nabla \varphi = 0 \quad \mbox{for all $\varphi \in C_c^\infty(\overline{\R^n_+})$}.
\]
Assume that $u \big\rvert_{\R^{n-1} \times \{0\}} \in L^\infty(\R^{n-1}\times \{0 \})$, then $u\in L^\infty(\R^n_+)$ and 
$$\|u\|_{L^\infty(\R^n_+)}\leq \|u\|_{L^\infty(\p \R^n_+)}. $$
\end{proposition}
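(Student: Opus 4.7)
The plan is to test the equation against the difference $u - \Pi\circ u$, where $\Pi\colon \R^N \to \overline{B(0,M)}$ is the nearest-point retraction onto the closed ball of radius $M := \|u\|_{L^\infty(\partial \R^n_+)}$ (explicitly $\Pi(y)=y$ for $|y|\le M$ and $\Pi(y)=My/|y|$ otherwise). The key properties of $\Pi$ are that it is $1$-Lipschitz, so $\Pi\circ u\in \dot{W}^{1,p}(\R^n_+,\R^N)$ satisfies $|\nabla(\Pi\circ u)|\le |\nabla u|$ a.e., and that $\Pi\circ u = u$ on $\partial\R^n_+$, so $u - \Pi\circ u$ has vanishing trace.

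Concretely I would take a standard cutoff $\eta_R\in C_c^\infty(B(0,R))$ with $\eta_R\equiv 1$ on $B(0,R/2)$ and $|\nabla\eta_R|\aleq 1/R$, test the equation against $\eta_R(u - \Pi\circ u)$, and let $R\to\infty$. This gives
\[
\int_{\R^n_+} \eta_R\, |\nabla u|^{p-2}\nabla u\cdot \nabla(u - \Pi\circ u) = -\int_{\R^n_+} (u-\Pi\circ u)^i\, |\nabla u|^{p-2}\nabla u^i\cdot \nabla\eta_R.
\]
The left-hand side converges by dominated convergence (dominated by $2|\nabla u|^p \in L^1$), and assuming the right-hand side vanishes in the limit, one obtains $\int |\nabla u|^p = \int |\nabla u|^{p-2}\nabla u\cdot \nabla(\Pi\circ u)$. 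Combining this with pointwise Cauchy--Schwarz, H\"older's inequality, and $|\nabla(\Pi\circ u)|\le |\nabla u|$ gives
\[
\int |\nabla u|^p \le \int |\nabla u|^{p-1}|\nabla(\Pi\circ u)| \le \Big(\int |\nabla u|^p\Big)^{\frac{p-1}{p}}\Big(\int |\nabla(\Pi\circ u)|^p\Big)^{\frac{1}{p}} \le \int |\nabla u|^p,
\]
forcing every inequality to be an equality; in particular $|\nabla(\Pi\circ u)| = |\nabla u|$ a.e. On $\{|u|>M\}$ a direct calculation shows $|\nabla(\Pi\circ u)|^2 = (M/|u|)^2\,|(I - u\otimes u/|u|^2)\nabla u|^2 < |\nabla u|^2$ whenever $\nabla u \neq 0$, so $\nabla u = 0$ a.e.\ on $\{|u|>M\}$. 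It follows that $\nabla(u - \Pi\circ u) = 0$ a.e.\ on the connected set $\R^n_+$, and the vanishing trace then forces $u - \Pi\circ u \equiv 0$, i.e.\ $|u|\le M$ a.e.

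The main obstacle is justifying that the cutoff boundary term vanishes as $R\to\infty$. Using $|u - \Pi\circ u| = (|u|-M)_+$, $|\nabla\eta_R|\aleq 1/R$ and H\"older's inequality, it is bounded by
\[
\frac{C}{R}\,\|(|u|-M)_+\|_{L^p(B(0,R)\setminus B(0,R/2))}\,\|\nabla u\|_{L^p(B(0,R)\setminus B(0,R/2))}^{p-1}.
\]
The second factor tends to zero by dominated convergence since $\nabla u\in L^p(\R^n_+)$. For the first factor, in the case $p<n$ I would invoke the Sobolev embedding $\dot{W}^{1,p}(\R^n_+)\hookrightarrow L^{p^*}(\R^n_+)$ together with H\"older on the annulus (exploiting $|B(0,R)|^{1/p - 1/p^*}\aleq R$) to bound $R^{-1}\|(|u|-M)_+\|_{L^p(B(0,R))}$ uniformly in $R$ by $\|(|u|-M)_+\|_{L^{p^*}(\R^n_+)}<\infty$. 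For $p\ge n$, where Sobolev no longer gives $L^{p^*}$, one first applies Lemma~\ref{la:boundedness2} to obtain local boundedness and then handles the tail by a more refined integrability argument.
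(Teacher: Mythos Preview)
Your argument is correct and uses the same comparison map as the paper, namely the truncation $\tilde u:=\Pi\circ u$, but the route to the conclusion is genuinely different. The paper does not test the equation at all. Instead it first proves (Proposition~\ref{prop:uniqueness}) that, by strict convexity of the $p$-energy together with the density of $C_c^\infty(\R^n_+)$ in $\{\phi\in\dot W^{1,p}(\R^n_+):\phi|_{\partial\R^n_+}=0\}$, any weak solution with trace $g$ is the \emph{unique minimizer} of $\int_{\R^n_+}|\nabla v|^p$ in $X=\{v\in\dot W^{1,p}:v|_{\partial\R^n_+}=g\}$. Since $\tilde u\in X$ and $\int|\nabla\tilde u|^p\le\int|\nabla u|^p$, uniqueness forces $\tilde u=u$ and hence $|u|\le M$. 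This variational route is shorter and sidesteps the cutoff issue entirely; your PDE route is more hands-on and extracts a little extra pointwise information (the equality case in Cauchy--Schwarz), at the price of having to control the tail term.

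On that tail term: your case $p\ge n$ is left vague, but in fact no case distinction is needed and the Sobolev embedding is unnecessary. Since $w:=(|u|-M)_+=|u-\Pi\circ u|$ has zero trace on $\partial\R^n_+$, its zero extension to $\R^n_-$ lies in $W^{1,p}(B(0,R))$ and vanishes on $B(0,R)\cap\R^n_-$; the Poincar\'e inequality for functions vanishing on a set of fixed volume fraction then gives
\[
\frac{1}{R}\|w\|_{L^p(B(0,R))}\le C\,\|\nabla w\|_{L^p(B(0,R))}\le C\,\|\nabla u\|_{L^p(\R^n_+)}
\]
uniformly in $R$ and for all $p\ge 1$. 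Combined with $\|\nabla u\|_{L^p(B(0,R)\setminus B(0,R/2))}^{p-1}\to 0$, this kills the boundary term in one stroke.
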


\begin{proof}
We denote by $g:=u \big\rvert_{\R^{n-1} \times \{0\}}$ and $M:=\|g\|_{L^\infty(\p \R^n_+)}$. From Proposition \ref{prop:uniqueness} below we know that $u$ is the unique minimizer of the energy $\int_{\R^n_+}|\nabla v|^p$ in $X:= \big\{v \in \dot {W}^{1,p}(\R^n_+,\R^N)\colon v \big\rvert_{\R^{n-1} \times 0}=g \text{ in the trace sense}  \big\}$. Now we define 
\[ 
\tilde{u}:= 
 \left\{ \begin{array}{ll}
    u & \textrm{ if } |u|\leq M,\\
    \frac{Mu}{|u|} & \textrm{ if } |u|> M.
  \end{array} \right.
\]
By a direct computation we can see
\[
\int_{\R^n_+} |\nabla \tilde{u}|^p \leq \int_{\R^n_+}|\nabla u|^p.
\]
Besides we have $\tilde{u}\big\rvert_{\p R^n_+}=g$. Thus by uniqueness we deduce that $\tilde{u}=u$ and $|u|\leq M$ in $\R^n_+$. This concludes the proof.
\end{proof}

It remains to prove:

\begin{proposition}\label{prop:uniqueness}
Let $u\in \dot{W}^{1,p}(\R^n_+,\R^N)$ be as in Proposition~\ref{th:boundedness-in-unbounded} a solution to 
\[
\divv (|\nabla u|^{p-2}\nabla u)=0 \quad \text{ in } \R^n_+. 
\]
Let us denote by $g=u \big\rvert_{\R^{n-1} \times 0}$ the trace of $u$. Then $u$ is the unique minimizer of the energy $\int_{\R^n_+}|\nabla v|^p$ in 
\[
 X:=\left\{v \in \dot {W}^{1,p}(\R^n_+,\R^N)\colon  v \big\rvert_{\R^{n-1} \times 0}=g \text{ in the trace sense}  \right\}.
\]
\end{proposition}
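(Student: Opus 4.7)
The plan is to combine the strict convexity of $\xi\mapsto|\xi|^p$ (for $p\ge 2$) with the weak formulation of the $p$-Laplace equation satisfied by $u$. The starting point is the pointwise convexity inequality: for $p\ge 2$ and all matrices $\xi,\eta\in\R^{N\times n}$,
\[
|\eta|^p\;\ge\;|\xi|^p+p\,|\xi|^{p-2}\xi\cdot(\eta-\xi),
\]
with strict inequality unless $\xi=\eta$. Applying this pointwise with $\xi=\nabla u(x)$, $\eta=\nabla v(x)$ for an arbitrary $v\in X$ and integrating over $\R^n_+$, one obtains
\[
\int_{\R^n_+}|\nabla v|^p\;\ge\;\int_{\R^n_+}|\nabla u|^p + p\int_{\R^n_+}|\nabla u|^{p-2}\nabla u\cdot\nabla(v-u).
\]

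The core of the argument is to show that the last integral vanishes for every $v\in X$. The difference $\varphi:=v-u$ lies in $\dot W^{1,p}(\R^n_+,\R^N)$ and has zero trace on $\partial\R^n_+$, since $v$ and $u$ share the same trace $g$. Extending $\varphi$ by zero across $\partial\R^n_+$ produces $\tilde\varphi\in\dot W^{1,p}(\R^n,\R^N)$; the zero-trace condition is precisely what guarantees that the distributional gradient of the extension is the trivial extension of $\nabla\varphi$. A cutoff-and-mollification argument then yields a sequence $(\varphi_k)\subset C_c^\infty(\R^n,\R^N)$ with $\nabla\varphi_k\to\nabla\tilde\varphi$ in $L^p(\R^n)$. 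The restrictions $\varphi_k|_{\overline{\R^n_+}}$ belong to $C_c^\infty(\overline{\R^n_+},\R^N)$ and are therefore admissible test functions in the equation inherited from Proposition~\ref{th:boundedness-in-unbounded}, so $\int_{\R^n_+}|\nabla u|^{p-2}\nabla u\cdot\nabla\varphi_k=0$ for every $k\in\N$; since $|\nabla u|^{p-2}\nabla u\in L^{p'}(\R^n_+)$, letting $k\to\infty$ gives $\int_{\R^n_+}|\nabla u|^{p-2}\nabla u\cdot\nabla\varphi=0$.

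Combining the two observations, $\int|\nabla v|^p\ge\int|\nabla u|^p$ for every $v\in X$, so $u$ is a minimizer. For uniqueness, if $v\in X$ is also a minimizer, the integrated convexity inequality must be an equality; since the mixed term vanishes and the pointwise inequality is strict wherever $\nabla u\ne\nabla v$, we conclude $\nabla u=\nabla v$ almost everywhere on $\R^n_+$. Hence $v-u$ is constant on the connected set $\R^n_+$, and its vanishing trace forces $v\equiv u$. The main obstacle is the approximation step above: producing $(\varphi_k)\subset C_c^\infty(\R^n,\R^N)$ with $\nabla\varphi_k\to\nabla\tilde\varphi$ in $L^p(\R^n)$ on an unbounded domain is delicate. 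When $p<n$, it follows from the Sobolev inequality $\|\tilde\varphi\|_{L^{p^*}(\R^n)}\lesssim\|\nabla\tilde\varphi\|_{L^p(\R^n)}$ (with $p^*=np/(n-p)$), which controls the error introduced by cutoffs of growing radius; the borderline and super-critical regimes $p\ge n$ would require a separate argument.
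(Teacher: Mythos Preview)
Your argument is correct and close in spirit to the paper's: both hinge on the strict convexity of $\xi\mapsto|\xi|^p$ together with an approximation of zero-trace functions in $\dot{W}^{1,p}(\R^n_+)$ by smooth compactly supported ones. The paper organizes this slightly differently---it first produces a minimizer via the direct method, notes that by convexity minimizers and critical points coincide, and then invokes the density of $C_c^\infty(\R^n_+)$ in $Y=\{\phi\in\dot{W}^{1,p}(\R^n_+):\phi|_{\partial\R^n_+}=0\}$ (citing Willem's book) to identify $u$ with that critical point. Your route via the pointwise convexity inequality is more direct and bypasses the existence step entirely; it also exploits the hypothesis of Proposition~\ref{th:boundedness-in-unbounded} as stated (test functions in $C_c^\infty(\overline{\R^n_+})$), which makes the extension-by-zero approximation natural.

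Regarding the gap you flag for $p\ge n$: it is not serious. The standard remedy is to subtract from $\tilde\varphi$ its average $c_R$ over the annulus $B(0,2R)\setminus B(0,R)$ before applying the cutoff $\eta_R$; Poincar\'e's inequality on the annulus then gives $\|(\tilde\varphi-c_R)\nabla\eta_R\|_{L^p}\aleq\|\nabla\tilde\varphi\|_{L^p(B(0,2R)\setminus B(0,R))}\to 0$, so $\nabla\bigl(\eta_R(\tilde\varphi-c_R)\bigr)\to\nabla\tilde\varphi$ in $L^p(\R^n)$ for every $p$. The restrictions to $\overline{\R^n_+}$ remain admissible test functions in Proposition~\ref{th:boundedness-in-unbounded}, and the rest of your argument goes through unchanged. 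The paper avoids this detail by citing the density result wholesale.
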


\begin{proof}
By the direct method of calculus of variations we can prove that there exists a minimizer $u_0$ of $\int_{\R^n_+}|\nabla u|^p$ in $X$. Besides, by strict convexity of the $p$-energy we have that this minimizer is unique and it is the unique critical point of the $p$-energy in $X$. 
That is there is at most one map with a trace equal to $g$ which satisfies 
\begin{equation}\label{eq:critical-point1}
\int_{\R^n_+}|\nabla u_0|^{p-2}\nabla u_0\cdot \nabla \phi=0, \ \ \ \ \forall \phi \in \dot{W}^{1,p}(\R^n_+,\R^N), \quad \phi \big\rvert_{\R^{n-1} \times \{0\}} = 0.
\end{equation}
Observe that $C_c^\infty(\R^n_+,\R^N)$ is dense in the space
\[
Y := \left \{\phi \in \dot{W}^{1,p}(\R^n_+,\R^N)\colon \phi \big |_{\R^{n-1} \times \{0\}} = 0 \right \},
\]
which can be proven as in, e.g., \cite[Proposition 6.2.5]{Willem_2013}. 
We conclude that there is at most  one map with a trace equal to $g$ which satisfies 
\begin{equation}\label{eq:critical-point2}
\int_{\R^n_+}|\nabla u_0|^{p-2}\nabla u_0\cdot \nabla \phi=0, \ \ \ \ \forall \phi \in C_c^\infty(\R^n_+).
\end{equation}
This implies the claim.
\end{proof} 
}
\bibliographystyle{abbrv}
\bibliography{bib}

\end{document}